\documentclass{amsart}
\usepackage[dvips]{graphicx}
\usepackage{enumerate}
\usepackage{amscd}
\usepackage{amssymb}

\title[Ending Lamination Conjecture]{Geometric approach to Ending Lamination Conjecture}
\author{Teruhiko Soma}
\subjclass[2000]{Primary 57M50; Secondary 30F40}
\keywords{Hyperbolic 3-manifolds, Ending Lamination Conjecture, curve graphs}
\thanks{The third version (\today)}
\address{Department of Mathematics and Information Sciences,
Tokyo Metropolitan University,
Minami-Ohsawa 1-1, Hachioji, Tokyo 192-0397, Japan}

\email{tsoma@tmu.ac.jp}

\begin{document}

\begin{abstract}
We present a new proof of the bi-Lipschitz model theorem, which occupies the main part of  
the Ending Lamination Conjecture proved by Minsky \cite{mi2} and Brock, Canary and Minsky \cite{bcm}.
Our proof is done by using techniques of standard hyperbolic geometry as much as possible.
\end{abstract}

\maketitle

\newtheorem{theorem}{Theorem}[section]
\newtheorem{cor}[theorem]{Corollary}
\newtheorem{lemma}[theorem]{Lemma}
\newtheorem{prop}[theorem]{Proposition}

\theoremstyle{definition}
\newtheorem{definition}[theorem]{Definition}
\newtheorem{example}[theorem]{Example}
\newtheorem{remark}[theorem]{Remark}

\numberwithin{figure}{section}
\numberwithin{equation}{section}

\def\hh{\mathbf{H}}
\def\zz{\mathbf{Z}}
\def\nn{\mathbf{N}}
\def\rr{\mathbf{R}}
\def\bt{\mathbf{T}}

\def\bbt{\mathbb{T}}

\def\ca{\mathcal{A}}
\def\cc{\mathcal{C}}
\def\cb{\mathcal{B}}
\def\ck{\mathcal{K}}
\def\cf{\mathcal{F}}
\def\cg{\mathcal{G}}
\def\ch{\mathcal{H}}
\def\cp{\mathcal{P}}
\def\cn{\mathcal{N}}
\def\cs{\mathcal{S}}
\def\cu{\mathcal{U}}
\def\cv{\mathcal{V}}
\def\cy{\mathcal{Y}}

\def\eset{\emptyset}
\def\part{\partial}
\def\to{\longrightarrow}
\def\sto{\rightarrow}
\def\sm{\setminus}
\def\wh{\widehat}
\def\wt{\widetilde}
\def\ol{\overline}
\def\fd{\pi_1}
\def\sg{\sigma}

\def\dist{\mathrm{dist}}
\def\diam{\mathrm{diam}}
\def\Int{\mathrm{Int}}
\def\fr{\mathrm{Fr}}
\def\area{\mathrm{Area}}
\def\ve{\varepsilon}
\def\L{\Lambda}
\def\O{\Omega}
\def\G{\Gamma}
\def\D{\Delta}
\def\Sg{\Sigma}
\def\vP{\varPhi}

\def\bsp{\boldsymbol{p}}
\def\bsq{\boldsymbol{q}}
\def\bsr{\boldsymbol{r}}
\def\bsmcA{\boldsymbol{\mathcal{A}}}
\def\bsmcB{\boldsymbol{\mathcal{B}}}
\def\bsmcD{\boldsymbol{\mathcal{D}}}
\def\bsmcQ{\boldsymbol{\mathcal{Q}}}

In \cite{th2}, Thurston conjectured that any open hyperbolic 3-manifold $N$ with finitely generated 
fundamental group is determined up to isometry by its end invariants.
In the case that $\fd(N)$ is a surface group, the conjecture is proved by Minsky \cite{mi2} and 
Brock, Canary and Minsky \cite{bcm}.
They also announced in \cite{bcm} that the conjecture holds for all hyperbolic 3-manifolds $N$ with $\fd(N)$ 
finitely generated.

In this paper, we concentrate on the previous case that $\fd(N)$ is isomorphic to the fundamental 
group of a compact surface $S$.
The original proof of the Ending Lamination Conjecture deeply depends on the theory of the curve 
complex developed by Masur and Minsky \cite{mm1,mm2}.
Our aim here is to replace some of such arguments (especially those concerning hierarchies) by 
arguments of standard hyperbolic geometry.

In \cite{mi2}, Minsky constructed the Lipschitz model manifold by using hierarchies in the following steps: 
(1) the definition of hierarchies, (2) the proof of the existence of a hierarchy $H_\nu$ associated to the end 
invariants $\nu$ of a given hyperbolic 3-manifold, (3) the definition of slices of $H_\nu$, 
(4) the proof of the existence of a resolution containing these slices, (5) the construction of 
the model manifold $M_\nu$ from the resolution which is realizable in $S\times \rr$.

In Section \ref{Hie}, we define a hierarchy directly as an object in $S\times \rr$, so the steps (1)-(5) as 
above are accomplished at once.
Lemma \ref{structure_sigma} is a geometric version of 
an assertion of Theorem 4.7 (Structure of Sigma) in \cite{mm2}, which plays an important role in our 
geometric proof of the bi-Lipschitz model theorem.

Section \ref{md} reviews Minsky's definition of the piecewise Riemannian metric on the model manifold.

In the proof of the Lipschitz model theorem in  \cite[Section 10]{mi2}, the hyperbolicity of 
the curve graph $\cc(S)$ is crucial.
This hyperbolicity is proved by \cite{mm1} (see also \cite{bow1}).
The proof of this theorem also needs two key lemmas. 
One of them (Lemma 7.9 in \cite{mi2}) is called the Length Upper Bounds Lemma, which shows that vertices of tight geodesics in $\cc(S)$ 
associated to the end invariants of $N$  are realized by geodesic loops in $N$ of length less than a uniform constant.  
Bowditch \cite{bow2} gives an alternative proof of this lemma by using more hyperbolic geometric techniques compared with Minsky's original proof.
Soma \cite{so} also gives a proof based on arguments in \cite{bow2}.
The proof in \cite{so} skips rather harder discussions in \cite[Sections 6 and 7]{bow2} 
by fully relying on geometric limit arguments.
The other key lemma (Lemma 10.1 in \cite{mi2}) shows that any vertical solid torus in the 
model manifold of $N$ with large meridian coefficient corresponds to a Marugulis tube in 
$N$ with sufficiently short geodesic core.
The original proof of this lemma is based on the ingenious estimations of meridian coefficients in \cite[Section 9]{mi2}.
In Section \ref{Lip}, we will give a shorter geometric proof of it. 
 
Section \ref{bLip} is the main part of this paper, where the bi-Lipschitz model theorem is proved by arguments 
of ourselves.

Alternate approaches to the Ending Lamination Conjecture are given by \cite{bow3,bbes,re}.
In \cite{bow3}, Bowditch  proved the sesqui-Lipschitz model theorem without using hierarchies.
Though the assertion of Bowditch's theorem is slightly weaker than that of the bi-Lipschitz model theorem, 
it is sufficient to prove the Ending Lamination Conjecture.
Ideas in this paper are much inspired from the philosophy of \cite{bow3}.

\section{Preliminaries}\label{Pre}

We refer to Thurston \cite{th1}, Benedetti and Petronio \cite{bp}, 
Matsuzaki and Taniguchi \cite{mt}, Marden \cite{ma} for details on hyperbolic geometry, and to 
Hempel \cite{he} for those on 3-manifold topology.
Throughout this paper, all surfaces and 3-manifolds are assumed to be oriented.

\subsection{The curve graph and tight geodesics}\label{curve}

Here we review some fundamental definitions and results on the curve graph.

Let $F$ be a connected (possibly closed) surface which has a hyperbolic metric of finite 
area such that each component of $\part F$ is a geodesic loop.
The complexity of $F$ is defined by $\xi(F)=3g+p-3$, where $g$ is the genus 
of $F$ and $p$ is the number of boundary components and punctures of $F$.

When $\xi(F)\geq 2$, we define the \emph{curve graph} $\cc(F)$ of $F$ to be the simplicial graph whose vertices are 
homotopy classes of non-contractible and non-peripheral simple closed curves in $F$ and whose edges are 
pairs of distinct vertices with disjoint representatives.
We simply call a vertex of $\cc(F)$ or any representative of the class a \emph{curve} in $F$.
For our convenience, we take a uniquely determined geodesic in $F$ as a representative for any curve in $F$.
The notion of curve graphs is introduced by Harvey \cite{har} and extended and modified versions are studied by \cite{mm1,mm2,mi1}.
In the case that $\xi(F)=1$, the curve graph $\cc(F)$ is the 1-dimensional simplicial complex  such that the vertices are 
curves in $F$ and that two curves $v,w$ form the end points of an edge if and only if they have the minimum geometric intersection number $i(v,w)$, 
that is, $i(v,w)=1$ when $F$ is a one-holed torus and $i(v,w)=2$ when $F$ is a four-holed sphere.
In either case, $\cc(F)$ is supposed to have an arcwise metric such that each edge is isometric to the unit interval $[0,1]$.
The graph $\cc(F)$ is not locally finite but is proved to be $\delta$-hyperbolic by Masur and Minsky 
\cite{mm1} (see also Bowditch \cite{bow1}) for some $\delta>0$.
The set of vertices in $\cc(F)$ is denoted by $\cc_0(F)$.
We say that the union of $k+1$ elements of $\cc_0(F)$ with mutually disjoint representatives is a \emph{$k$-simplex} in $\cc_0(F)$.

Let $\mathcal{ML}(F)$ be the space of compact measured laminations on $\Int F$ and $\mathcal{UML}(F)$ the quotient space of $\mathcal{ML}(F)$ obtained by forgetting the measures, and 
let $\mathcal{EL}(F)$ be the subspace of $\mathcal{UML}(F)$ consisting of filling laminations $\mu$.
Here $\mu$ being \emph{filling} means that, for any $\mu'\in \mathcal{UML}(F)$, either $\mu'=\mu$ or $\mu'$ 
intersects $\mu$ non-trivially and transversely.
According to Klarreich \cite{kla} (see also Hamenst\"{a}dt \cite{ham}), there exists a homeomorphism $k$ from the 
Gromov boundary $\part \cc(F)$ to $\mathcal{EL}(F)$ which is defined so that a sequence $\{v_i\}$ in $\cc_0(F)$ converges to $\beta \in \part \cc(F)$ if and only if it converges to $k(\beta)$ in $\mathcal{UML}(F)$.

\begin{definition}
A sequence $\{v_i\}_{i\in I}$ of simplices in $\cc_0(F)$ is called a \emph{tight sequence} if it satisfies 
one of the following conditions, where $I$ is a finite or infinite interval of $\zz$.
\begin{enumerate}[\rm (i)]
\item
When $\xi(F)>1$, for any vertices $w_i$ of $v_i$ and $w_j$ of $v_j$ with $i\neq j$, $d_{\cc(F)}(w_i,w_j)=|i-j|$.
Moreover, if $\{i-1,i,i+1\}\subset I$, then $v_i$ is represented by the union of components of $\part F_{i-1}^{i+1}$ 
which are non-peripheral in $F$, where $F_{i-1}^{i+1}$ is the minimum subsurface in $F$ with geodesic boundary 
and containing the geodesic representatives of all vertices of $v_{i-1}$ and $v_{i+1}$.
\item
When $\xi(F)=1$, $\{v_i\}$ is just a geodesic sequence in $\cc_0(F)$.
\end{enumerate}
\end{definition}

We regard that a single vertex is a tight sequence of length $0$.
The definition implies that, for any tight sequence $\{v_i\}$, if a vertex $w$ of $\cc(F)$ meets $v_i$ transversely, then $w$ meets at least one of $v_{i-1}$ and $v_{i+1}$ transversely.

The following theorem is Lemma 5.14 in \cite{mi2} (see also Theorem 1.2 in \cite{bow2}), which is crucial in the proof of the Ending Lamination Conjecture.

\begin{theorem}\label{tight}
Let $u,w$ be distinct points of $\cc_0(F)\cup \mathcal{EL}(F)$, there exists a tight sequence connecting 
$u$ with $w$.
\end{theorem}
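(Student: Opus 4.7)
The plan is to start with a geodesic in $\cc(F)$ joining $u$ and $w$, tighten it by replacing each interior vertex with the non-peripheral boundary of the corresponding filling subsurface, and reduce the boundary case to the vertex case via Klarreich's identification $\part\cc(F)\cong\mathcal{EL}(F)$. When $\xi(F)=1$ the tightness condition collapses to being a geodesic edge path in $\cc(F)$, so the claim reduces to the existence of geodesics (respectively geodesic rays) in the hyperbolic graph $\cc(F)$; an ending lamination endpoint is handled by approximating it by vertices converging to it in $\mathcal{UML}(F)$ and extracting a limiting ray.

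For $\xi(F)>1$ and $u,w\in\cc_0(F)$, I would fix any geodesic $u=v_0,v_1,\dots,v_n=w$ in $\cc(F)$. For each $0<i<n$, let $v_i'$ consist of the components of $\part F_{i-1}^{i+1}$ that are non-peripheral in $F$, where $F_{i-1}^{i+1}$ is as in the definition preceding the theorem. Each component of $v_i'$ is disjoint from both $v_{i-1}$ and $v_{i+1}$ and hence lies at $\cc(F)$-distance exactly one from each, so the modified sequence is still geodesic of the same length. Iterating the replacement stabilises after finitely many rounds, since each pass can only simplify the filling subsurfaces; the stabilised sequence satisfies condition (i) of the definition by construction. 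The global distance condition $d_{\cc(F)}(w_i,w_j)=|i-j|$ for $w_i$ a vertex of $v_i'$ and $w_j$ of $v_j'$ is inherited from the original geodesic via the triangle inequality, using that any curve disjoint from $v_i'$ is disjoint from $\part F_{i-1}^{i+1}$ and hence from $v_{i-1}\cup v_{i+1}$.

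For the boundary case $u\in\mathcal{EL}(F)$, $w\in\cc_0(F)$, I would choose $u_j\in\cc_0(F)$ with $u_j\to u$ in $\mathcal{UML}(F)$; by Klarreich's theorem this forces $d_{\cc(F)}(u_j,w)\to\infty$. Applying the vertex case yields tight sequences $\{v_i^{(j)}\}$ from $u_j$ to $w$, and a diagonal argument, indexing simplices backwards from $w$, produces in the limit a tight sequence terminating at $w$. By construction its vertices converge in $\mathcal{UML}(F)$ to $u$, so this is the required sequence from $u$ to $w$; the case $u,w\in\mathcal{EL}(F)$ is handled symmetrically from both ends. The main obstacle is the diagonal extraction: since $\cc(F)$ is not locally finite, there is no a priori compactness among simplices at fixed distance from $w$. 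The role of tightness is precisely to force each simplex in the sequence to be the non-peripheral boundary of a filling subsurface of $F$ determined by its neighbours, and proving that this geometric constraint yields the required finiteness at each bounded distance from $w$, together with verifying that condition (i) survives passage to the limit, is the technical heart of the argument; it is this finiteness that the Masur--Minsky subsurface projection machinery in \cite{mm2} and Bowditch's alternative approach in \cite{bow2} are designed to establish.
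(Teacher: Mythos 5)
The paper does not actually prove this statement: it is stated with the citation ``Lemma 5.14 in \cite{mi2} (see also Theorem 1.2 in \cite{bow2})'' and used as a black box. So there is no internal proof to compare against; what you have written is an outline of the standard Masur--Minsky/Bowditch argument, and you are candid that the ``technical heart'' (the local finiteness of tight geodesics between vertices of bounded distance) is deferred to those sources. At the level of the paper, that citation is exactly the intended resolution.

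Two points in your sketch nevertheless need repair. First, the justification ``any curve disjoint from $v_i'$ is disjoint from $\part F_{i-1}^{i+1}$ and hence from $v_{i-1}\cup v_{i+1}$'' has the implication backwards: a curve disjoint from the boundary of the subsurface $F_{i-1}^{i+1}$ may lie \emph{inside} it and intersect $v_{i-1}$ or $v_{i+1}$. The usable fact is the reverse containment --- $v_{i-1}$ and $v_{i+1}$ lie inside $F_{i-1}^{i+1}$ and hence are disjoint from its boundary $v_i'$ --- and from this, replacing the single vertex $v_i$ by $v_i'$ keeps the sequence geodesic (one bounds $d(w_i,u_j)$ above by going through $u_{i-1}$ and below by going through $u_{i+1}$, or vice versa). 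Second, the assertion that ``iterating the replacement stabilises after finitely many rounds, since each pass can only simplify the filling subsurfaces'' is the crux of the finite case and is not obvious: once $v_i$ is replaced, the subsurface $F_{i-1}^{i+1}$ used for the next index depends on the new $v_i'$, so a pure one-pass sweep does not terminate automatically, and the monotonicity needs an explicit complexity (e.g.\ intersection-number) argument in the spirit of \cite[Lemma 4.5]{mm2}. As written, both of these are asserted rather than proved, and the second in particular is nontrivial.
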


Let $\boldsymbol{i},\boldsymbol{t}$ be unions of mutually disjoint curves in $F$ and laminations in $\mathcal{UML}(F)$.
Then a tight sequence $g=\{v_i\}_{i\in I}$ in $F$ is said to be a \emph{tight geodesic} with the 
\emph{initial marking} $\boldsymbol{i}(g)=\boldsymbol{i}$ and the \emph{terminal marking} $\boldsymbol{t}(g)=
\boldsymbol{t}$ if it satisfies the following conditions.
\begin{itemize}
\item
If $i_0=\inf I>-\infty$, then $v_{i_0}$ is a curve component of $\boldsymbol{i}$, otherwise $\boldsymbol{i}$ 
consists of a single lamination component and $\boldsymbol{i}=\lim_{i\sto-\infty}v_i\in \mathcal{EL}(F)$.
\item
If $j_0=\sup I<\infty$, then $v_{j_0}$ is a curve component of $\boldsymbol{t}$, otherwise $\boldsymbol{t}$ 
consists of a single lamination component and $\boldsymbol{t}=\lim_{j\sto \infty}v_j\in \mathcal{EL}(F)$.
\end{itemize}
Our rule in the definition is that, whenever an end of a tight geodesic is chosen, curve components have 
priority over lamination components if any.

\subsection{Setting on hyperbolic 3-manifolds}\label{hyp}
Throughout this paper, we suppose that $S$ is a compact connected surface (possibly $\part S=\eset$) with 
$\chi(S)<0$ and $\rho:\fd(S)\to \mathrm{PSL}_2(\mathbf{C})$ is a faithful discrete representation 
which maps any element of $\fd(S)$ represented by a component of $\part S$ to a parabolic element.
For convenience, we fix a complete hyperbolic surface $\wh S$ containing $S$ as a compact core and such that 
each component $P$ of $\wh S\setminus S$ is a parabolic cusp with $\mathrm{length}(\part P)=\ve_1$.
We denote the quotient hyperbolic 3-manifold $\hh^3/\rho(\fd(S))$ by $N_\rho$ (or $N$ for short).
By Bonahon \cite{bo}, $N$ is homeomorphic to $\wh S\times \rr$. 
Fix a 3-dimensional Margulis constant $\ve_0>0$.
For any $0<\ve<\ve_0$, the (open) $\ve$-thin and (closed) $\ve$-thick parts of $N$ are 
denoted by $N_{(0,\ve)}$ and $N_{[\ve,\infty)}$ respectively.
It is well known that there exists a constant $\ve_1>0$ depending only on $\ve$ and the topological type of $S$ 
such that, for any pleated map $f:\wh S\to N$, the image $f(\wh S(\sg_f)_{[\ve_0,\infty)})$ is 
disjoint from $N_{(0,\ve_1)}$, where $\sg_f$ is the hyperbolic structure on $\wh S$ induced from that on $N$ via $f$.
If necessary retaking $\ve_1>0$, we may assume that each simple closed geodesic in $\wh S$ is contained in $S$. 
The \emph{augmented core} $\wh C_\rho$ of $N$ is defined by 
$$\wh C_\rho=C^1_\rho\cup N_{(0,\ve_0]},$$
where $C^1_\rho$ is the closed $1$-neighborhood of the convex core of $N$ and $N_{(0,\ve]}$ is the closure of $N_{(0,\ve)}$ in $N$. 
The complement $N\setminus \Int \wh C_\rho$ is denoted by $E_N$, which is considered to be a neighborhood of the union of geometrically finite relative ends of $N$.

The orientations of $S$, $N$ and a proper homotopy equivalence $f:\wh S\to N$ with $\fd(f)=\rho$ determines 
the $(+)$ and $(-)$-side ends of $N$.
Let $\boldsymbol{q}_+=l_1\cup\cdots\cup l_n$ be the disjoint union of simple closed geodesics in $S$ corresponding to 
the parabolic cusps in the 
$(+)$-side end and let $\mathcal{GF}_+$ (resp.\ $\mathcal{SD}_+$) be the set of components of $\wh S\setminus 
\boldsymbol{q}_+$ corresponding to geometrically finite (resp.\ simply degenerate) relative ends in the $(+)$-side.
For any $F_i\in \mathcal{GF}_+$ (resp.\ $F_j\in \mathcal{SD}_+$), let $\sg_i\in \mathrm{Teich}(F_i)$ (resp.\ 
$\lambda_j\in \mathcal{EL}(F_i)$) be the conformal structure on $F_i$ at infinity (resp.\ 
the ending lamination on $F_i$), see \cite{th1,bo} for details on ending laminations.
The family $\nu_+=\{\sg_i,\lambda_j\}$ is called the $(+)$-side \emph{end invariant} set of $N$.
The $(-)$-side end invariant set $\nu_-$ is defined similarly.
The pair $\nu=(\nu_-,\nu_+)$ is the \emph{end invariant set} of $N$.

It is well known that there exists a constant $L>0$ depending only on the topological type of $S$ such that, 
for any $\sg_i\in \nu_+$ with $F_i\in \mathcal{GF}_+$, there exists a pants decomposition $\bsr_i= 
s_1\cup\cdots\cup s_{m}$ on $F_i$ such that $l_{\sg_i}(s_k)<L$, where $l_{\sg_i}(s_k)$ is the length of the 
geodesic in $F(\sg_i)$ homotopic to $s_k$.
Then the union
\begin{equation}\label{gpd}
\bsp_+=\bsq_+\cup \Bigl(\bigcup_{F_i\in \mathcal{GF}_+}\bsr_i\Bigr)\cup\Bigl(\bigcup_{F_j\in \mathcal{SD}_+}
\lambda_j\Bigr)
\end{equation}
is called a \emph{generalized pants decomposition} on $\wh S$ associated to $\nu_+$.
A \emph{generalized pants decomposition} $\bsp_-$ on $\wh S$ associated to $\nu_-$ is defined similarly.

\subsection{Annulus union and bricks}

We suppose that $\wh \rr=\{-\infty\}\cup \rr\cup \{\infty\}$ is the two-point compactification of 
$\rr$.
So $\wh\rr$ is homeomorphic to a closed interval in $\rr$.
For any subset $P$ of $\wh S\times \wh\rr$, the image of $P$ by the orthogonal projection to $\wh S$ (resp.\ $\wh\rr$) 
is denoted by $P^S$ (resp.\ $P^\rr$), that is, $P^S=\{x\in \wh S\,;\,(x,t)\in P\ \mbox{for some $t\in\wh\rr$}\}$ 
and $P^\rr=\{t\in \wh\rr\,;\,(x,t)\in P\ \mbox{for some $x\in \wh S$}\}$.
For any non-peripheral simple geodesic loop $l$ in $\wh S$ and any closed interval $J$ of $\wh\rr$, 
$A=l\times J$ is called a \emph{vertical annulus} in $S\times \wh\rr$.
For a connected open subsurface $F$ of $\wh S$ with $\fr (F)$ geodesic, the product $B=F\times J$ is called 
a \emph{brick} in $\wh S\times \wh\rr$, 
where $\fr(F)$ denotes the frontier $\ol F\cap \ol{(\wh S\setminus F)}$ of $F$ in $\wh S$.
Set $\part_{\mathrm{vt}}B=\fr(F)\times J$, $\part_-B=F\times \{\inf J\}$,  
$\part_+ B=F\times \{\sup J\}$ (possibly $\inf J=-\infty$ or $\sup J=\infty$) and $\part_{\mathrm{hz}}B=\part_-B\cup \part_+ B$.
The surface $\part_+ B$ (resp.\ $\part_-B$) is called the \emph{positive} (resp.\ \emph{negative}) \emph{front}
of $B$. 
We say that a union $\ca$ of mutually disjoint vertical annuli in $\wh S\times \wh \rr$ which are locally 
finite in $\wh S\times \rr$ is an 
\emph{annulus union}.
A \emph{horizontal surface} $F$ of $(\wh S\times \wh\rr,\ca)$ is a connected component of $\wh S\times \{a\}\setminus \ca$ for some $a\in \wh\rr$.
In particular, $\fr(F)\subset \ca$ and $F^S$ is an open subsurface of $\wh S$.
A horizontal surface $F$ is \emph{critical} with respect to $\ca$ if at least one component of 
$\fr(F)$ is an edge of some component of $\ca$.
Let $\cb$ be the set of bricks in $\wh S\times \wh \rr$ which are maximal among bricks $B$ with $\Int B\cap \ca=\eset$ and $\part_{\mathrm{vt}}B\subset \ca$, see Fig.\ \ref{f_brick}\,(a).
Note that, for any $B\in \cb$, $B\cap \ca$ is a disjoint union (possibly empty) of simple geodesic loops in $\part_{\mathrm{hz}}B$.
This fact is important in the definition of hierarchies in Section \ref{Hie}.
Each component of $\part_{\mathrm{hz}}B\setminus \ca$ is a critical horizontal surface of $(\wh S\times \wh\rr,\ca)$.
\begin{figure}[hbtp]
\centering
\scalebox{0.125}{\includegraphics[clip]{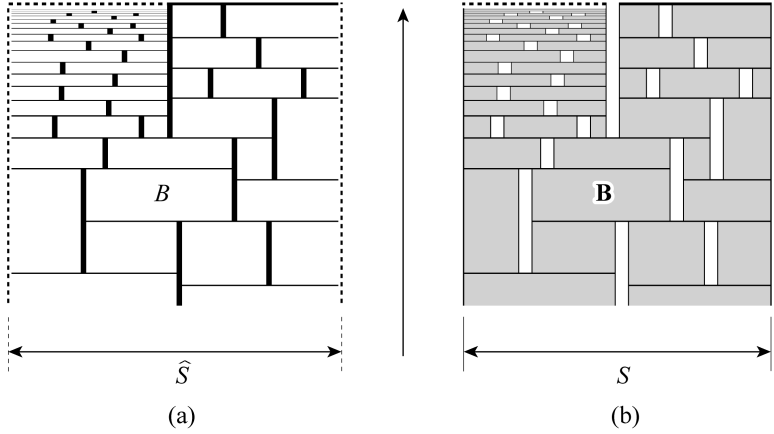}}
\caption{(a) The union of vertical segments is $\ca$.
(b) The shaded region represents $W$.}
\label{f_brick}
\end{figure}

For a vertical annulus $A=l\times J$, $U=\Int(L\times J)$ is called a \emph{vertical solid torus} 
(for short \emph{v.s.-torus}) with the \emph{geodesic core} $A$, where $L$ be an equidistant regular neighborhood of $l$ in $S$.
Then $L\times J$ is the closure $\ol U$ of $U$ in $S\times \wh \rr$.
We set $\part \ol U=\part U$ for simplicity.
A simple loop in $\part U$ is a \emph{longitude} of $U$ if it is isotopic in $\part U$ to a component of $\part A$.
A \emph{meridian} of $\part U$ is a simple loop in $\part U$ which is non-contractible in $\part U$ but contractible in $\ol U$.
For any annulus union $\ca$ in $\wh S\times \wh\rr$, there exists a disjoint union $\cv$ of v.s.-tori the union of whose geodesic cores is equal to $\ca$.
Then $\cv$ is called a \emph{v.s.-torus union} with the geodesic core $\ca$.
In general, the union $\cv^\bullet$ of the closures of components of $\cv$ is not equal to the closure $\ol \cv$ of $\cv$ in $S\times \wh \rr$.
A \emph{horizontal surface} of $(S\times \wh\rr,\cv)$ is a compact connected surface $F$ in 
$S\times \{a\}$ for some $a\in \wh\rr$ with $\Int F\cap \cv^\bullet=\eset$ and 
$\part F\subset \cv^\bullet$.
The horizontal surface is \emph{critical} if it is contained in a 
critical horizontal surface of $(\wh S\times \rr,\ca)$.
For any $B\in \cb$, the closure $\mathbf{B}$ of $B\setminus \cv^\bullet$ in $S\times \wh\rr$ is a \emph{brick} of $(S\times \wh\rr,\cv)$.  
Note that $\mathbf{B}$ is a compact subset of $S\times \wh \rr$.
The \emph{brick decomposition} $\bsmcB$ of $(S\times \wh\rr,\cv)$ is the set of bricks of $(S\times \wh\rr,\cv)$.
Then the union $W=\bigcup \bsmcB$ satisfies 
$$S\times \rr\setminus \cv\subset W \subset S\times \wh\rr\setminus \cv,$$
see Fig.\ \ref{f_brick}\,(b).
When $\mathbf{B}\in \bsmcB$ is contained in $B\in \cb$, set $\part_{\mathrm{hz}}\mathbf{B}=\part_{\mathrm{hz}} B\cap \mathbf{B}$, $\part_\pm \mathbf{B}=\part_\pm B\cap 
\mathbf{B}$ and let 
$\part_{\mathrm{vt}}\mathbf{B}$ be the closure of $\part \mathbf{B}\setminus 
\part_{\mathrm{hz}}\mathbf{B}$ in $\part\mathbf{B}$.

\subsection{Geometric limits and bounded geometry}\label{bg}

We say that a sequence $\{(N_n,x_n)\}$ of hyperbolic 3-manifolds  
with base points converges 
\emph{geometrically} to a hyperbolic 3-manifold $(N_\infty,x_\infty)$ with base point if there 
exist monotone decreasing and increasing sequences $\{K_n\}$, $\{R_n\}$ with $\lim_{n\sto\infty}K_n=1$, 
$\lim_{n\sto\infty}R_n=\infty$ and $K_n$-bi-Lipschitz maps
$$g_n:\cn_{R_n}(x_n,N_n)\to \cn_{R_n}(x_\infty,N_\infty),$$
where $\cn_R(x,N)$ denotes the closed $R$-neighborhood of $x$ in $N$.
It is well known that, if $\inf\{\mathrm{inj}_{N_n}(x_n)\}>0$, then 
$\{(N_n,x_n)\}$ has a geometrically convergent subsequence, for example see \cite{jm,bp}.
If we take a Margulis constant $\ve>0$ sufficiently small, then one can choose the bi-Lipschitz maps 
so that $g_n(\cn_{R_n}(x_n,N_n)_{[\ve,\infty)})=\cn_{R_n}(x_\infty,N_\infty)_{[\ve,\infty)}$, 
where $\cn_R(x,N)_{[\ve,\infty)}=\cn_R(x,N)\cap N_{[\ve,\infty)}$.

In general, the topological type of the limit manifold $N_\infty$ is very complicated, for example see \cite{os}.
In spite of the fact, by observing situations in geometric limits, we often know    
the existence of useful uniform constants.
We will give here typical examples.

\begin{example}\label{e_geom1}
Let $F$ be a connected compact surface and $N$ a hyperbolic 3-manifolds as in Subsection \ref{hyp}.
Suppose that $\mathrm{Teich}_{\ve}(F)$ is the Teichm\"{u}ller space such that, for any $\sg\in \mathrm{Teich}(F)$, $F(\sg)$ 
represents a hyperbolic structure on $F$ each boundary component of which is a geodesic loop of length $\ve$.
Let $f_i:F(\sigma_i)\to N_{[\ve,\infty)}$ $(i=0,1)$ be 
$K$-Lipschitz maps properly homotopic to each other in $N_{[\ve,\infty)}$, where $K\geq 1$ and 
$\sg_i\in \mathrm{Teich}_\ve(F)$ $(i=0,1)$. 
For the homotopy $H:F\times [0,1]\to N_{[\ve,\infty)}$ and a point $x\in F$, the image $H(\{x\}\times [0,1])$ 
is said to be a \emph{homotopy arc} connecting $f_0(F)$ and $f_1(F)$.
Here we will show by invoking a geometric limit argument that there exists a constant $d_0>0$ depending only 
on $\ve,d_1,K$ and the topological type of $S$ such that, if there exists a homotopy arc connecting $f_0(F)$ with 
$f_1(F)$ of length at most $d_1$, then $\dist_{\mathrm{Teich}_\ve(F)}(\sigma_0,\sigma_1)<d_0$.

Suppose contrarily that there would exist a sequence of pairs of homotopy equivalence 
$K$-Lipschitz maps $f_{i,n}:F(\sg_{i,n})\to N_{n[\ve,\infty)}$ with homotopy arcs $\alpha_n$ connecting 
$f_{0,n}(F)$ with $f_{1,n}(F)$ of length $\leq d_1$ and $\dist_{\mathrm{Teich}_\ve(F)}(\sigma_{0,n},\sigma_{1,n})\geq n$, 
where $N_n$ are hyperbolic 3-manifolds as in Subsection \ref{hyp}.
Since the $\ve/K$-thin part of $F(\sg_{i,n})$ is empty, there exists a $K'$-bi-Lipschitz map $\gamma_{i,n}:
F(\sg_0)\to F(\sg_{i,n})$ for some fixed $\sg_0\in \mathrm{Teich}_\ve(F)$, where $K'$ is a constant depending only 
on $\ve$, $K$ and $S$.
We note that $\gamma_{i,n}$ does not necessarily preserve the marking on $F$.
Let $Q_n$ be the union of bounded components of $N_{n[\ve,\infty)}\setminus f_{0,n}(F)\cup f_{1,n}(F)$ and $R_n$ 
a small regular neighborhood of $f_{0,n}(F)\cup f_{1,n}(F)$ in $N_{n[\ve,\infty)}$.
Then $J_n=R_n\cup Q_n$ is a compact connected subset of $N_{n[\ve,\infty)}$.
By \cite{fhs}, we know that $f_{0,n}$ is properly homotopic to $f_{1,n}$ in $J_n$.
If we take a base point $x_n$ of $N_n$ in $J_n$, then $\{(N_n,x_n)\}$ has a subsequence, still denoted by 
$\{N_n\}$, converges geometrically to a hyperbolic 3-manifold $(N_\infty,x_\infty)$. 
Thus we have $K_n$-bi-Lipschitz maps $g_n:\cn_{R_n}(x_n,N_n)\to \cn_{R_n}(x_\infty,N_\infty)$ as above.

For any point $y\in J_n$ with $\dist_{N_n[\ve,\infty)}(y,f_{0,n}(F)\cup f_{1,n}(F))>1$, we have a 
pleated map $g:\wh S\to N_n$ such that there exists a component $L$ of $g(\wh S)\cap N_{n[\ve,\infty)}$ 
meeting the $1$-neighborhood of $x$ in $N_{n[\ve,\infty)}$.
It is not hard to see that $L$ meets $f_{0,n}(F)\cup \alpha_n \cup f_{1,n}(F)$ non-trivially and the diameter 
of $L$ is bounded by a constant depending only on $\ve$, $S$.
Thus the diameter of $J_n$ is less than a constant $R>0$ depending only on $\ve,d_1,K,S$ and hence 
$J_n$ is contained in $\cn_{R_n}(x_n,N_n)_{[\ve,\infty)}$ for all sufficiently large $n$.

By the Ascoli-Arzel\`{a} Theorem, if necessarily passing to subsequences, one can show that 
 $\psi_{i,n}=g_n\circ f_{i,n}\circ \gamma_{i,n}:F(\sg_0)\to N_{\infty[\ve,\infty)}$ $(i=0,1)$ converge uniformly to 
$KK'$-Lipschitz maps $\varphi_i:F(\sg_0)\to N_{\infty[\ve,\infty)}$.
Since $\psi_{i,n}$ $(i=0,1)$ is properly homotopic to $\varphi_i$ for all sufficiently large $n$ and 
$f_{0,n}\circ\gamma_{0,n}$ is properly homotopic to $f_{1,n}\circ \gamma_{1,n}$ in $J_{n}$ up to marking, 
there exists a diffeomorphism (hence a $K''$-bi-Lipschitz map for some $K''\geq 1$) $\alpha:F(\sg_0)\to F(\sg_0)$ 
such that $\varphi_0$ is properly 
homotopic to $\varphi_1\circ \alpha$ in a small compact neighborhood of $g_{n}(J_{n})$ 
in $N_{\infty[\ve,\infty)}$.
This implies that, for any non-contractible simple closed curve $l$ in $F$, $\gamma_{0,n}(l)$ is homotopic to 
$\gamma_{1,n}\circ \alpha (l)$ in $F$.
Thus $\gamma_{1,n}\circ \alpha\circ \gamma_{0,n}^{-1}:F(\sg_{0,n})\to F(\sg_{1,n})$ is a marking-preserving 
${K'}^2K''$-bi-Lipschitz map for all sufficiently large $n$, which contradicts that 
$\dist_{\mathrm{Teich}_\ve(F)}(\sigma_{0,n},\sigma_{1,n})\geq n$.
This shows that the existence of our desired uniform constant $d_0$.
\end{example}

\begin{example}\label{e_geom2}
We work in the situation as in the previous example and suppose moreover that 
there exists a constant $d_2>0$ with $\dist_{N_{n[\ve,\infty)}}(f_{0,n}(F),f_{1,n}(F))\geq d_2$ for all $n$ 
and each $f_{i,n}$ is properly homotopic in $N_{n[\ve,\infty)}$ to an embedding.
By \cite{fhs}, one can suppose that such an embedding is contained in an arbitrarily small regular neighborhood 
of $f_{i,n}(F)$ in $N_{n[\ve,\infty)}$ and the image of the homotopy is in $J_n$ given as above.
Then $\varphi_i:F\to N_{\infty[\ve,\infty)}$ $(i=0,1)$ are also homotopic to embeddings $\varphi_i'$ contained in an arbitrarily small regular neighborhood of $\varphi_{i}(F)$ in $N_{\infty[\ve,\infty)}$ and the image of the homotopy is in $g_n(J_n)$ for a sufficiently large $n$.
By the standard theory of 3-manifold topology (for example see \cite{wa,he}), the union 
$\varphi_0'(F)\cup \varphi_1'(F)$ bounds a submanifold $B$ of $N_{\infty[\ve,\infty)}$ contained 
in $g_n(J_n)$ and homeomorphic to $F\times [0,1]$.
Then, for all sufficiently large $n$, $B_n=g_n^{-1}(B)$ is the submanifold of $N_{n[\ve,\infty)}$ 
such that $\fr(B_n)$ consists of two components $F_{i,n}$ $(i=0,1)$ properly homotopic to $f_{i,n}(F)$ in $J_n$.
Since the composition $g_m^{-1}\circ g_n|B_n$ defines a marking-preserving $K_mK_n$-bi-Lipschitz map from $B_n$ 
to $B_m$ and since $\lim_{m,n\sto \infty}K_mK_n=1$, we know that $B_n$'s have the geometry uniformly 
bounded by constants depending only on $\ve,d_1,d_2$ and the topological type of $S$.
\end{example}

\begin{remark}\label{r_geom}
Deform the metric on $N_{n[\ve,\infty)}$ in a small collar neighborhood of $\part N_{n[\ve,\infty)}$ so that $\part N_{n[\ve,\infty)}$ 
is locally convex but the sectional curvature of $N_{n[\ve,\infty)}$ is still pinched.
We here consider the case that $f_{i,n}:F(\sg_i)\to N_{n[\ve,\infty)}$ $(i=0,1)$ are embeddings which have the least area among 
all maps homotopic to $f_{i,n}$ without moving $f_{i,n}|_{\part F(\sg_i)}$ and such that 
$\area(F(\sg_i))$ is bounded by a constant independent of $n$.
Then the limits $\varphi_i:F\to N_{\infty[\ve,\infty)}$ are least area maps (see \cite[Lemma 3.3]{hs}), and hence 
by \cite{fhs} they are  also embeddings.
Thus, in Example \ref{e_geom2}, one can suppose that $\varphi_i'=\varphi_i$ and hence the frontier of the manifold $B$ is $\varphi_0(F)\cup \varphi_1(F)$.
\end{remark}

\section{Three-dimensional approach to hierarchies}\label{Hie}

We study hierarchies in the curve graph $\cc(S)$ introduced by \cite{mm2}.
We realize them as families 
of annulus unions  
in $\wh S\times \wh \rr$, the original idea of which is due to \cite[Section 4]{bow3}.

\subsection{Hierarchies}\label{ss_hierarchies}
Let $\bsp_\nu=(\boldsymbol{p}_-,\boldsymbol{p}_+)$ be the pair of generalized pants decompositions on $\wh S$ given 
in Subsection \ref{hyp}.
We denote by $\cb_0$ and $\wh\cb_0$ the single element set $\{\wh S\times \wh\rr\}$.
Consider a tight geodesic $g_0=\{v_i\}_{i\in I}$ with $\boldsymbol{i}(g_0)=\boldsymbol{p}_-$ and 
$\boldsymbol{t}(g_0)=\boldsymbol{p}_+$, where $I$ is an interval in $\zz$.
In this section, we always assume that, for any disjoint union $v$ of simple geodesic loop $l_1,\dots,l_k$ in $\wh S$, 
$A(v)$ represents a union of vertical annuli $A_i$ $(i=1,\dots,k)$ in $\wh S\times \wh \rr$ with $A_i^S=l_i$ and 
$A_i^\rr=A_j^\rr$ for all $i,j\in \{1,\dots,k\}$.
Thus $A(v)$ is determined uniquely from $v$ and $A(v)^\rr$.

Suppose that $\xi(S)>1$ and $\bsp_-$, $\bsp_+$ are in $\wh S\times \{-\infty\}$ and $\wh S\times \{\infty\}$ 
respectively.
When $i\in I$ is not either $\inf(I)$ or $\sup(I)$, $A(v_i)$ is defined to be the union of vertical annuli in 
$\wh S\times \rr$ with $A(v_i)^\rr=[i,i+1]$.
When $i=\sup{I}<\infty$ (resp.\ $i=\inf{I}>-\infty$), let $A(v_i)^\rr=[i,\infty]$ (resp.\ $A(v_i)^\rr=[-\infty,i+1]$).
We say that $\ca(g_0)=\bigcup_{i=0}^n A(v_i)$ is the \emph{annulus union} determined from the tight geodesic $g_0$.
Let $\cb_1$ be the brick decomposition of $(\wh S\times \wh \rr,\ca(g_0))$.
An element $B\in \cb_1$ is said to be \emph{connectable} if both $\part_\pm B\cap \ca_0$ are not empty, 
where $\ca_0=\ca(g_0)\cup \bsp_-\cup \bsp_+$.
Let $\wh\cb_1$ be the subset of $\cb_1$ consisting of connectable bricks $B$ with $\xi(B)>1$, where $\xi(B)=\xi(B^S)$.
If $\xi_{\mathrm{max}}(\cb_1)=\max \{\xi(B);B\in \cb_1\}>1$, then any $B\in \cb_1$ with 
$\xi(B)=\xi_{\mathrm{max}}(\cb_1)$ is an element of $\wh B_1$.

For any $B\in \wh\cb_1$, consider a tight geodesic $g_B$ in $B^S$ with $\boldsymbol{i}(g_B)=(\part_- B\cap \ca_0)^S$ 
and $\boldsymbol{t}(g_B)=(\part_+ B\cap \ca_0)^S$.
One can define the annulus union $\ca_B$ of vertical annuli in $B$ determined from $g_B$ as above.
In particular, $\ca_B$ consists of vertical annuli with the same width unless the length of $g_B$ is finite and 
$B^\rr\cap \{-\infty,\infty\}\neq \eset$.
Note that $\ca_B$ is a single annulus when the initial vertex of $g_B$ is equal to the terminal vertex of $g_B$.
Set $\ca_1=\ca_0\cup\bigl(\bigcup_{B\in \wh \cb_1}\ca_B\bigr)$,  $\boldsymbol{i}(\ca_B)=\part_- B\cap \ca_0$ 
and $\boldsymbol{t}(\ca_B)=\part_+ B\cap \ca_0$.

Repeating the same argument at most $\xi(S)-1$ times, say $k$ times, one can show that 
each element $B$ of the set $\cb_k$ of bricks of $(\wh S\times \wh \rr,\ca_{k-1})$ has $\xi(B)=1$.
Since $\xi_{\mathrm{max}}(\cb_k)=1$, each $B\in \cb_k$ is connectable.
We set then $\cb_k=\wh \cb_k$.
Let $g_B=\{w_i\}$ be a tight geodesic in $B^S$ with $\boldsymbol{i}(g_B)=(\part_- B\cap \ca_{k-1})^S$ and 
$\boldsymbol{t}(g_B)=(\part_+ B\cap \ca_{k-1})^S$.
Since $w_i\cap w_{i+1}\neq \eset$, we need to add a buffer brick between $A(w_i)$ and $A(w_{i+1})$ to make them mutually disjoint.
Suppose that $B^\rr=[a,b]$.
If $a\neq -\infty$ and $b\neq \infty$ and $g_B=(w_0,w_1,\dots,w_m)$, then $A(w_i)^\rr= [a+2i\tau,a+(2i+1)\tau]$ for $i=0,1,\dots,m$, where $\tau=(b-a)/(2m+1)$.
Note that $B^S\times [a+(2i+1)\tau,a+(2i+2)\tau]$ is the buffer brick between $A(w_i)$ and $A(w_{i+1})$.
If $a\neq -\infty$ and $b= \infty$ and $g_B=(w_0,w_1,\dots,w_m)$, then $A(w_i)^\rr= [a+2i,a+2i+1]$ for 
$i=0,1,\dots,m-1$ and $A(w_m)^\rr=[a+2m,\infty]$.
If $a\neq -\infty$ and $b= \infty$ and $g_B=(w_0,w_1,\dots)$, then $A(w_i)^\rr= [a+2i,a+2i+1]$ for all $i$.
In the case that $a=-\infty$, $A(w_i)$ for $w_i\in g_B$ is defined similarly.
As above, let $\ca_B=\bigcup_{w_i\in g_B} A(w_i)$,  $\boldsymbol{i}(\ca_B)=\part_- B\cap \ca_{k-1}$ and 
$\boldsymbol{t}(\ca_B)=\part_+ B\cap \ca_{k-1}$.

When $B\in \wh\cb_j$, we say that the \emph{level} of $B$ is $j$ and denote it by $\mathrm{level}(B)$.
The set $H_\nu$ of all tight geodesics appeared in this construction is called a \emph{hierarchy} 
associated to the pair $\bsp_\nu=(\bsp_-,\bsp_+)$ of generalized pants decompositions and $$\ca_{H_\nu}=\ca_{k-1}\cup
\bigl(\bigcup_{B\in \cb_{k}}\ca_B\bigr)$$
is the \emph{annulus union} determined by $H_{\nu}$.
Note that the set $H_\nu$ is not necessarily defined from $\bsp_\nu$ uniquely.

For any $B\in \wh B_j$, a maximal brick $C$ in $B$ with $\Int C\cap \ca_B=\eset$ and $\part_{\mathrm{vt}}
C\subset \ca_B$ is 
called a \emph{subbrick} of $B$.
From our construction, for any $B\in \wh\cb_j$ with $0<j\leq k$, there exists  either a brick  $B'\in \cb_{j-1}$ 
with $\part_+B'=\part_+B$ or a subbrick $C$ of some element of $\wh \cb_{j-1}$ 
with $\part_+C=\part_+B$.
In the former case, $B'$ is not in $\wh\cb_{j-1}$, otherwise $B$ would be split by $\ca_{B'}\subset \ca_{j-1}$.
Repeating  the same argument, we have eventually a brick $B_0\in \wh\cb_{j_0}$ 
for some $j_0<j$ which contains a subbrick $C$ with $\part_+C=\part_+B$.
Then we say that $B$ is \emph{directly forward subordinate} to $B_0$ and denote it by $B\ {\searrow}\!\!{}^d\ B_0$.
The \emph{directly backward subordinate} $B_0\ {}^d\!\!{\swarrow}\ B$ is defined similarly, see Fig.\ \ref{f_so}.
It is possible that $B$ is directly forward and backward to the same brick $B_0$, i.e.\ $B_0\ {}^d\!\!{\swarrow}\ B\ {\searrow}\!\!{}^d\ B_0$.
\begin{figure}[hbtp]
\centering
\scalebox{0.125}{\includegraphics[clip]{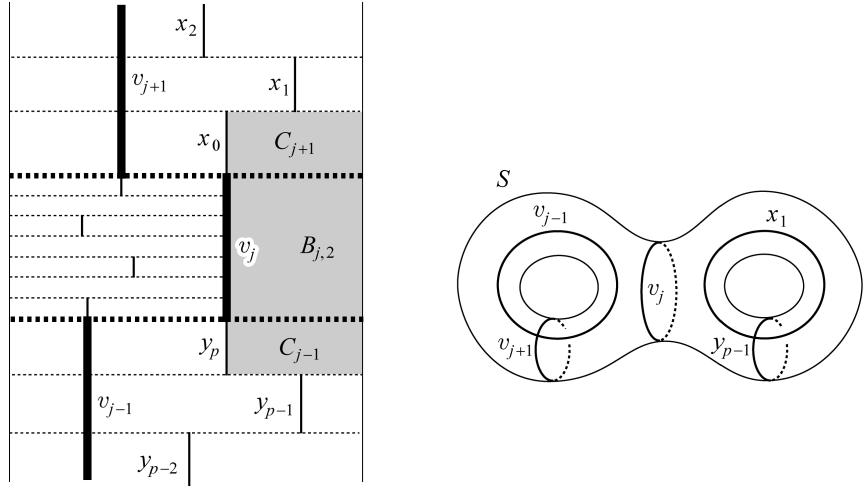}}
\caption{Let $g_0=(\dots,v_{j-1},v_j,v_{j+1},\dots)$ be a tight geodesic in the closed surface $S$ of 
genus $2$.
Let $B_a\in \wh \cb_1$ $(a=j\pm 1)$ be the element with $\part_{\mathrm{vt}}B_a=A(v_a)$.
Let $B_{j,1},B_{j,2}$ be the elements of $\cb_1$ whose vertical boundaries are $A(v_j)$ and such that 
$B_{j,1}$ is connectable but $B_{j,2}$ is not.
$(v_j=x_0,x_1,x_2,\dots)$ is a tight geodesic in $B_{j+1}^S$ and $(\dots,y_{p-1},y_{p-1},y_p=v_j)$ is a 
tight geodesic in $B_{j-1}^S$.
The shaded region represents an element $B=C_{j-1}\cup B_{j,2}\cup C_{j+1}$ of $\wh \cb_2$ with $B_{j-1}\ {}^d\!\!{\swarrow}\ B\ {\searrow}\!\!{}^d\ B_{j+1}$.
In fact, we have $\part_+ B=\part_+ C_{j+1}$ and $\part_-B=\part_- C_{j-1}$, where $C_a$ $(a=j\pm 1)$ is 
the subbrick of $B_a$ as illustrated 
in the figure.}
\label{f_so}
\end{figure}

Since only horizontal surfaces of $(\wh S\times \wh\rr,\ca_i)$ contained in $\Int B$ for some 
$B\in \wh\cb_{i+1}$ are 
split by $\ca_{i+1}$, any critical horizontal surface of $(\wh S\times \wh\rr,\ca_i)$ 
is still a (possibly non-critical) horizontal surface of $(\wh S\times \wh\rr,\ca_{i+1})$.
The relation $B\ {\searrow}\!\!{}^d\ B_0$ for $B\in \wh \cb_j$ and $B_0\in \wh \cb_{j_0}$ implies that, for any $i$ with $j_0<i\leq j$, $\part_+B$ is the positive front of some element $B_i$ of $\cb_i$.
Since $G=\part_+B\setminus \ca_{j_0}$ is a union of critical horizontal surfaces of $(\wh S\times \wh\rr,\ca_{j_0})$, each component $F$ of $G$ is a horizontal surface of $(\wh S\times \wh\rr,\ca_{j_0+1})$.
Since moreover $F\subset \part_+ B_{j_0+2}$, $F$ is critical with 
respect to $\ca_{j_0+1}$.
Repeating the same argument, one can show that $F$ is a critical horizontal surface of $(\wh S\times \wh\rr,\ca_{j-1})$.
It follows that $G=\part_+ B\setminus \ca_{j-1}=\part_+ B\setminus \boldsymbol{t}(\ca_B)$ and hence 
$\boldsymbol{t}(\ca_B)=\part_+B\cap \ca_{j_0}$.

\subsection{Single brick occupation}\label{long_bricks}

Let $\ca_0,\dots,\ca_{k-1},\ca_{H_\nu}$ be the annulus unions and $\cb_0,\dots,\cb_k$ the 
brick decompositions given in Subsection \ref{ss_hierarchies}.

\begin{lemma}\label{non_para}
Any two components of $\ca_{H_\nu}$ are not parallel in $S\times \rr$.
\end{lemma}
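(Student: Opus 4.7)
Assume for contradiction that two distinct components $A_1 = l \times J_1$ and $A_2 = l \times J_2$ of $\ca_{H_\nu}$ are parallel in $S\times \rr$; equivalently, they share a common geodesic core $l \subset \wh S$. Each $A_i$ arises as a subannulus of $A(u_i)$ for some vertex $u_i$ of a tight geodesic $g_{B_i}$ in $H_\nu$ with $B_i \in \wh\cb_{j_i}$, where the simplex $u_i$ has $l$ as a curve component; in particular, $l$ is non-peripheral in $B_i^S$. Without loss of generality, $j_1 \le j_2$. If $B_1 = B_2$, then the tightness condition $d_{\cc(B_1^S)}(w,w') = |i-i'|$ for components $w,w'$ of vertices at indices $i,i'$, applied to $w = w' = l$, forces $u_1$ and $u_2$ to appear at the same index in $g_{B_1}$; hence $u_1 = u_2$ and $A_1 = A_2$, a contradiction.

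Assume therefore $B_1 \ne B_2$. The containment $A_1 \subset \ca_{j_1} \subset \ca_{j_2-1}$ (when $j_1 < j_2$), or the disjointness of $B_1$ and $B_2$ as bricks at level $j_2$ together with $A_1 \subset B_1$ (when $j_1 = j_2$), combined with $\Int B_2 \cap \ca_{j_2-1} = \eset$, shows that $A_1 \cap \Int B_2 = \eset$. Since $l \subset \Int B_2^S$ by non-peripherality, one concludes $J_1 \cap \Int B_2^\rr = \eset$, so $A_1$ sits vertically outside $B_2$. Trace $B_2$ back through the subordination structure (the paragraph following the definition of ${\searrow}\!\!{}^d$) to find an ancestor $B' \in \cb_{j_1}$ with $B_2 \subset B'$; because $l$ is a geodesic loop and $\fr(B'^S)$ is geodesic, non-peripherality of $l$ in $B_2^S \subset B'^S$ transfers to non-peripherality of $l$ in $B'^S$.

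The crux is to identify $B' = B_1$. Both bricks $B_1$ and $B'$ lie in $\cb_{j_1}$ and carry $l$ in the interior of their $S$-projections, so the vertical cylinder $l \times \wh\rr$ meets both $\Int B_1$ and $\Int B'$. If $B_1 \ne B'$, these interiors are disjoint, and the cylinder $l \times \wh\rr$ must cross a surface of $\ca_{j_1-1}$ between them. Such a crossing surface is a vertical annulus, whose core is either $l$ itself (yielding a third parallel component already present in $\ca_{j_1-1}$, on which the argument iterates until reaching a pair for which no earlier level contains an annulus with core $l$) or a curve meeting $l$ transversely, which would force the crossing annulus to intersect $\Int B_1$ or $\Int B'$, contradicting $\Int B \cap \ca_{j_1-1} = \eset$. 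Hence $B' = B_1$, and then $B_2 \subset B_1$ together with $A_1 \subset \ca_{B_1}$ and the minimum-subsurface characterization of interior vertices of $g_{B_1}$ forces $l \subset \fr(B_2^S)$, contradicting the non-peripherality of $l$ in $B_2^S$.

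The main obstacle is the crossing-annulus analysis in the previous paragraph: one must iterate the argument down to a pair $(A_1, A_2)$ that is minimal in the sense that $\ca_{j_1-1}$ carries no annulus with core $l$, at which point the inductive rigidity of the construction of the $\ca_j$'s (each new annulus in $\ca_B$ lies inside $B$ and is combinatorially prescribed by the tight geodesic $g_B$) forces the desired identification $B' = B_1$, completing the proof.
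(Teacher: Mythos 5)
There is a genuine gap in the crossing-annulus step, and you flag it yourself as ``the main obstacle'' without actually resolving it. The specific problem: a fiber $\{x\}\times\wh\rr$ with $x\in l$ that leaves $\Int B_1$ and enters $\Int B'$ exits through $\part_{\mathrm{hz}}B_1$ (not $\part_{\mathrm{vt}}B_1$, since $l$ is non-peripheral so $l\cap\fr(B_1^S)=\eset$), i.e.\ through a horizontal surface whose boundary lies on $\ca_{j_1-1}$ but which is not itself part of $\ca_{j_1-1}$. So the cylinder $l\times\wh\rr$ need not cross any annulus component of $\ca_{j_1-1}$ at all; it can pass from $B_1$ to $B'$ entirely through horizontal brick fronts. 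The dichotomy ``crossing core is $l$ or meets $l$ transversely'' therefore does not arise, and the claimed contradiction in the transverse case (``would force the crossing annulus to intersect $\Int B_1$ or $\Int B'$'') is not justified: an annulus $m\times J_m\subset\ca_{j_1-1}$ with $m\cap l\neq\eset$ can perfectly well have $J_m$ disjoint from both $\Int B_1^\rr$ and $\Int B'^\rr$, and indeed must, precisely because $\Int B_1\cap\ca_{j_1-1}=\eset$. So the induction you sketch has no base and the identification $B'=B_1$ is not established. The final step also leans on an unstated use of the minimum-subsurface condition to conclude $l\subset\fr(B_2^S)$ from $B_2\subset B_1$, which is not spelled out.

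The paper handles this differently and the difference is not cosmetic. Rather than comparing the two bricks supporting the two parallel annuli directly, it takes a \emph{closest} parallel pair, picks the top-level bricks $B,B'\in\wh\cb_k$ containing the adjacent edges $\part_+A$ and $\part_-A'$, and builds a pair of directly subordinate sequences from $B$ and $B'$, chosen to minimize the maximal level reached while keeping all interiors on the two sides disjoint. It then proves those two sequences meet at a common brick $D$, and uses the tightness of $g_D$ (together with the fact that $A_0$ is disjoint from the boundaries $\fr(\part_+B_m)$ and $\fr(\part_-B_n')$, so that $d_{\cc(F)}(v^S,w^S)\le 2$) to show $A_0\cap t_m=\eset$, propagating this disjointness down to level $k$, where it contradicts $\xi(\part_+B)=1$. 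This subordinate-sequence machinery is exactly what supplies a controlled ``intermediate'' brick $D$ on which the tight geodesic condition can be exploited; your direct trace-back to a common ancestor in $\cb_{j_1}$ has no analogous mechanism for extracting a contradiction from what lies vertically between $B_1$ and $B'$. To repair your argument you would essentially have to reintroduce the subordinate-sequence structure, so I would recommend following the paper's route.
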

\begin{proof}
Suppose that $\ca_{H_\nu}$ contains distinct mutually parallel components $A,A'$.
When more than one elements are parallel to $A$, we may assume that $A'$ is closest to $A$ among them 
and $\max A^\rr<\min {A'}^\rr$.
Let $B$ (resp.\ $B'$) be the element of $\wh \cb_k$ with $\part_+ A\subset \mathrm{Int}B$ (resp.\ 
$\part_- A'\subset \mathrm{Int}B'$).
Since any two components of $\ca_B$ are not mutually parallel, $\Int B\cap \Int B'$ is empty.
Consider a pair of two directly subordinate sequences
\begin{equation}\label{e_so}
B_0\ {\searrow}\!\!{}^d\ B_1\ {\searrow}\!\!{}^d\ \cdots\ {\searrow}\!\!{}^d\ B_{m+1},\quad
B_{n+1}'\ {}^d\!\!{\swarrow}\ \cdots\ {}^d\!\!{\swarrow}\ B_1'\ {}^d\!\!{\swarrow}\ B'_0
\end{equation}
satisfying the following conditions.
\begin{enumerate}[\rm (i)]
\item
$B_0=B$, $B_0'=B'$, and $\Int B_i\cap \Int B_j'=\eset$ for any $0\leq i\leq m$ and $0\leq j\leq n$.
\item
The pair (\ref{e_so}) has the minimum $\max\{\mathrm{level}(B_{m+1}),\mathrm{level}(B_{n+1}')\}$ 
among all pairs of subordinate sequences satisfying the condition (i).
\end{enumerate}
Note that any $B_i$ and $B_j'$ meet the vertical annulus $A_0$ with $\part_-A_0=\part_- A$ and 
$\part_+ A_0=\part_+A'$ non-trivially.

First, we will show that $B_{m+1}=B_{n+1}'$.
For the symmetricity, we may assume that $\mathrm{level}(B_{m+1})\leq \mathrm{level}(B_{n+1}')$.
Take the entry $B_i$ in the the directly forward subordinate sequence with
$$\mathrm{level}(B_{i+1})\leq \mathrm{level}(B_{n+1}')< \mathrm{level}(B_{i}).$$
Then there exists an element of $D\in \cb_a$ with $D\setminus \part_- D\supset \part_+ B_i$, where 
$a=\mathrm{level}(B_{n+1}')$.
Then, in particular, $(\part_+B_i)^{\rr}\leq (\part_+D)^{\rr}$.
Suppose that $D\neq B_{n+1}'$.
Since $A$ penetrates both $D$ and $B_{n+1}'$, this implies $(\part_+D)^{\rr}\leq (\part_- B_{n+1}')^{\rr}$.
If $D\in \wh \cb_a$, then $B_i\ {\searrow}\!\!{}^d\ D$ and hence $D=B_{i+1}$.
Since then $\Int B_{i+1}\cap \Int B_{n+1}'=\eset$,
$$
B_0\ {\searrow}\!\!{}^d\ B_1\ {\searrow}\!\!{}^d\ \cdots\ {\searrow}\!\!{}^d\ B_{i+1}\ {\searrow}\!\!{}^d\ B_{i+2},\quad
B_{n+2}'\ {}^d\!\!{\swarrow}\ \cdots\ {}^d\!\!{\swarrow}\ B_1'\ {}^d\!\!{\swarrow}\ B'_0
$$
is a sequence satisfying the condition (i) and $\max\{\mathrm{level}(B_{i+2}), \mathrm{level}(B_{n+2}')\}<a$.
If $D\in \cb_a\setminus \wh \cb_a$, then $D\neq B_{i+1}$ and hence $\mathrm{level}(B_{i+1})<a$.
Thus
$$
B_0\ {\searrow}\!\!{}^d\ B_1\ {\searrow}\!\!{}^d\ \cdots\ {\searrow}\!\!{}^d\ B_{i+1},\quad
B_{n+2}'\ {}^d\!\!{\swarrow}\ \cdots\ {}^d\!\!{\swarrow}\ B_1'\ {}^d\!\!{\swarrow}\ B'_0
$$
is a sequence satisfying the condition (i) and $\max\{\mathrm{level}(B_{i+1}), \mathrm{level}(B_{n+2}')\}<a$.
In either case, this contradicts the minimality condition (ii).
It follows that $D=B_{n+1}'$.
Since this implies $D\in \wh \cb_a$, $B_{i+1}=D$.
Thus we have $i=m$ and $B_{m+1}=B_{n+1}'=D$.

For short, set $D^S=F$, $A_0^S=l$, $v=\fr(\part_+B_m)$, $w=\fr(\part_- B_n')$ and let $t_m$ be the 
component of $\boldsymbol{t}(\ca_{B_m})$ such that $t_m^S$ is the terminal vertex of $g_{B_m}$.
Since $A_0\cap (\fr(\part_+ B_m)\cup \fr(\part_- B_n'))=\eset$, 
$$d_{\cc(F)}(v^S,w^S)\leq d_{\cc(F)}(v^S,l)+d_{\cc(F)}(l,w^S)= 2.$$
Suppose first that $d_{\cc(F)}(v^S,w^S)=2$ and consider the union $J$ of components of $\ca(g_D)$ with 
$(\part_- J)^\rr=v^\rr$ and $(\part_+ J)^\rr=w^\rr$, see Fig.\ \ref{f_np}.
\begin{figure}[hbtp]
\centering
\scalebox{0.125}{\includegraphics[clip]{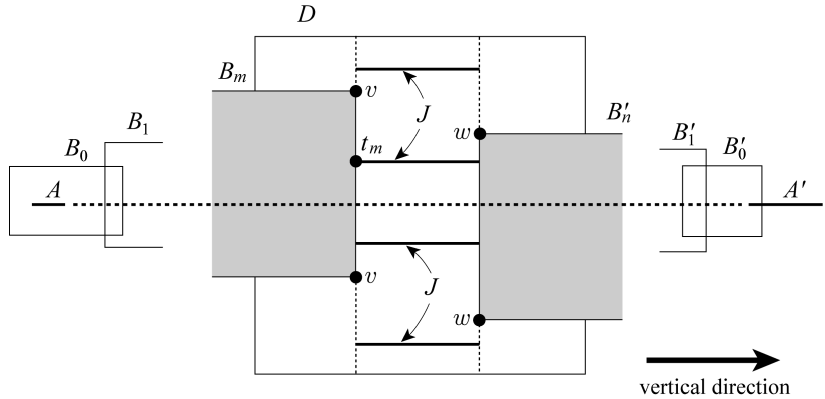}}
\caption{The case of $d_{\cc(F)}(v^S,w^S)=2$.}
\label{f_np}
\end{figure}
Since $l\cap (v^S\cap w^S)=\eset$, the tightness of $g_D$ implies either $l\subset J^S$ or $l\cap J^S=\eset$.
However, the former does not occur since $A$ and $A'$ are a closest pair.
So, we have $A_0\cap t_m=\eset$.
When $d_{\cc(F)}(v^S,w^S)=1$, either $t_m\cap \part_- B_n'=\eset$ or $t_m\subset w$ holds.
This also implies $A_0 \cap t_m=\eset$.

Repeating the same argument for $B_{m-1},B_{m-2},\dots,B_0=B$, one can show that $A_0\cap t_0=\eset$.
This contradicts that the surface $\part_+ B$ with $\xi(\part_+ B)=1$ can not contain mutually disjoint two curves.
Thus any two components of $\ca_{H_\nu}$ are not parallel to each other.
\end{proof}

The following lemma is a geometric version of the fourth assertion of Theorem 4.7 (Structure of Sigma)
in \cite{mm2}.

\begin{lemma}\label{structure_sigma}
Suppose that $B,B'$ are elements of $\wh\cb_a$ and $\wh\cb_b$ respectively.
If $B^S=B'{}^S$, then $B=B'$.
\end{lemma}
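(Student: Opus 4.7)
The plan is to argue by contradiction. Assume $B\in\wh\cb_a$ and $B'\in\wh\cb_b$ are distinct with $B^S=B'^S=F$. If $F=\wh S$, both must be the unique level-zero brick $\wh S\times\wh\rr\in\wh\cb_0$, so we may assume $\fr(F)\neq\eset$. I would first establish that $\Int B\cap\Int B'=\eset$: otherwise, connectedness of the interiors together with the nested structure of the decompositions $\{\cb_j\}$ would force (WLOG) $B'\subset B$ with $a<b$; but the annuli of $\ca_B$ introduced at step $a+1$ partition $B^\rr$ with no gap, since consecutive simplex vertices of the tight geodesic $g_B$ in $\cc(F)$ are disjoint in $\wh S$, so every sub-brick of $B$ at a higher level has base strictly smaller than $F$, contradicting $B'^S=F$. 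Hence $B$ and $B'$ have disjoint interiors, and being based on the same $F$ they are stacked in time; WLOG $\sup B^\rr\leq\inf B'^\rr$.

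For each curve $l\in\fr(F)$, the sub-annuli $l\times B^\rr\subset\part_{\mathrm{vt}}B$ and $l\times B'^\rr\subset\part_{\mathrm{vt}}B'$ of $\ca_{H_\nu}$ must lie in a single component $A_l$ of $\ca_{H_\nu}$, for otherwise they would give two disjoint components with common base $l$, hence parallel in $S\times\rr$, contradicting Lemma \ref{non_para}. Consequently $A_l^\rr\supset[\inf B^\rr,\sup B'^\rr]$ for every such $l$, so the slab $F\times[\inf B^\rr,\sup B'^\rr]$ has its entire vertical frontier inside $\ca_{H_\nu}$ while not being a single brick; some component of $\ca_{H_\nu}$ with base in $\Int F$ must therefore intersect the gap $F\times(\sup B^\rr,\inf B'^\rr)$, blocking $B$ from above and $B'$ from below.

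The main step follows the blueprint of the proof of Lemma \ref{non_para}. I would choose a pair of directly subordinate chains
$$B=B_0\ {\searrow}\!\!{}^d\ B_1\ {\searrow}\!\!{}^d\ \cdots\ {\searrow}\!\!{}^d\ B_{m+1},\qquad B_{n+1}'\ {}^d\!\!{\swarrow}\ \cdots\ {}^d\!\!{\swarrow}\ B_1'\ {}^d\!\!{\swarrow}\ B'_0=B'$$
with pairwise disjoint interiors $\Int B_i\cap\Int B_j'=\eset$ and with $\max\{\mathrm{level}(B_{m+1}),\mathrm{level}(B_{n+1}')\}$ as small as possible. The minimality argument of Lemma \ref{non_para} forces $B_{m+1}=B_{n+1}'=:D$; inside $D$, the tight geodesic $g_D$ together with Lemma \ref{non_para} propagate disjointness of the gap-blocking curves from the terminal vertex of $g_{B_m}$ back through $g_{B_{m-1}},\dots,g_{B_0}=g_B$, and symmetrically on the initial side through $g_{B'}$.

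The hardest step will be replacing the final contradiction of Lemma \ref{non_para}, which relied on $\xi(\part_+B)=1$ leaving no room for two disjoint curves on $\part_+B$; here $\xi(F)$ may exceed $1$. I expect the contradiction to come instead from combining the propagated disjointness with the tightness of $g_B$ and $g_{B'}$ to force either a shared vertex between these two tight geodesics (whose two instances of $A(v)$ in $\ca_B$ and $\ca_{B'}$ at disjoint times would constitute parallel components of $\ca_{H_\nu}$) or, more directly, a further pair of parallel components of $\ca_{H_\nu}$; in either case Lemma \ref{non_para} is violated.
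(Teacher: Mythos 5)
Your setup is sound and matches the paper's opening moves: the reduction to $\fr(B^S)\neq\eset$, the verification that $\Int B\cap\Int B'=\eset$, and the use of directly subordinate chains meeting at a common brick $D$ as in the proof of Lemma~\ref{non_para}. You are also right that the endgame of Lemma~\ref{non_para} (two disjoint curves cannot live on a surface with $\xi=1$) is unavailable here since $\xi(B^S)$ may be large. The trouble is that your proposed replacement for that endgame is only a guess, and it does not match what actually closes the argument.

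The paper does \emph{not} derive the contradiction from parallel components of $\ca_{H_\nu}$ or from a shared vertex of $g_B$ and $g_{B'}$. Instead it introduces the brick $E$ with $\part_-E=\part_-B$, $\part_+E=\part_+B'$, notes $E^S=H=B^S=B'^S$, and then propagates a disjointness statement of a \emph{different and stronger} kind than in Lemma~\ref{non_para}: not ``the terminal vertex $t_j$ avoids a fixed annulus $A_0$,'' but ``the terminal vertex $t_j$ avoids $\Int H_j$, the whole slice $E\cap\part_+B_j$ of the gap-region.'' The seed of the induction is the observation that $H\subset\part_+B_m^S\cap\part_-B_n'^S$ forces the smallest geodesic subsurface $F'\subset D^S$ containing $D^S\setminus\Int(\part_+B_m^S\cap\part_-B_n'^S)$ to be disjoint from $\Int H$, and tightness of $g_D$ places $t_m^S$ in $F'$. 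Propagating this down through $g_{B_{m-1}},\dots,g_{B_0}=g_B$ yields $\Int(\part_+B_0)\cap t_0=\eset$. This is already the contradiction, with no recourse to Lemma~\ref{non_para}: $t_0$ is the terminal vertex of $g_{B_0}$, so $t_0\subset\boldsymbol{t}(\ca_{B_0})\subset\part_+B_0$ and $t_0^S$ is non-peripheral in $B_0^S$, which forces $t_0\subset\Int(\part_+B_0)$. Your proposal, as written, leaves precisely this step open: ``propagate disjointness of the gap-blocking curves'' is the right instinct, but without identifying $H$ as the object whose interior is progressively excluded and without recognizing that the final contradiction is simply that a non-peripheral terminal vertex of $g_B$ cannot avoid $\Int(\part_+B)$, the proof is incomplete.
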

\begin{proof}
We suppose that $B\neq B'$ and induce a contradiction.

Since any two elements of $\wh B_a$ have mutually disjoint interiors, 
if $\Int B\cap \Int B'\neq\eset$, then $a\neq b$, say $a<b$.
The assumption $B\in \wh\cb_a$ implies $\ca_B\subset \ca_a\subset \ca_{b-1}$.
Since $B^S=B'{}^S$, $\Int B\cap \Int B'\neq\eset$ implies $\Int B'\cap \ca_B\neq \eset$.
This contradicts the fact that $\Int B'\cap \ca_{b-1}(\supset \Int B'\cap \ca_B)$ is empty.
Thus we have $\Int B\cap \Int B'=\eset$.

Now, we consider a sequence 
$$
B=B_0\ {\searrow}\!\!{}^d\ B_1\ {\searrow}\!\!{}^d\ \cdots\ {\searrow}\!\!{}^d\ B_{m+1}
=D=
B_{n+1}'\ {}^d\!\!{\swarrow}\ \cdots\ {}^d\!\!{\swarrow}\ B_1'\ {}^d\!\!{\swarrow}\ B'_0=B'
$$
as in the proof of Lemma \ref{non_para}.
Let $E$ be the brick in $\wh S\times \wh\rr$ with $\part_- E=\part_- B$ and 
$\part_+ E=\part_+ B'$.
We set $E^S=H$ and $H_j=E\cap \part_+B_j$.
Since $H\subset \part_+B_m^S\cap \part_-B_{n-1}^S$, 
the smallest surface $F'$ in $F=D^S$ with geodesic boundary and containing $F\setminus \Int(\part_+B_m^S\cap \part_-B_{n-1}^S)$ is disjoint from $\Int H$.
Since $g_D$ is a tight geodesic in $F$, the terminal vertex $t_m^S$ 
of $g_{B_m}$ with $t_m\subset \boldsymbol{t}(\ca_{B_m})$ is contained in $F'$ 
and hence $\Int H_m\cap t_m=\eset$.
Repeating the same argument for $B_{m-1},\dots,B_0=B$, one can show that 
$\Int (\part_+ B_0)=\Int H_0$ is disjoint from $t_0$.
This contradicts that $B_0$ is a connectable brick with $B_0\ {\searrow}\!\!{}^d\ B_1$.
Thus we have $B=B'$.
\end{proof}

Let $B$ be an element of $\cb_i$.
If $B$ is not connectable, then  $\Int B\cap \ca_i=\eset$.
Thus there exists a $C\in \cb_{i+1}$ with $C\supset B$ and $C^S=B^S$ (possibly $B=C$).
Repeating the same argument if $C$ is not connectable, we have eventually a unique element 
$B^\vee$ of $\wh\cb_j$ with $j\geq i$, $B^\vee\supset B$ and $B^{\vee S}=B^S$, which is called the 
\emph{expanding connectable brick} of $B$.
For example, $C_{j-1}\cup B_{j,2}\cup C_{j+1}\in \wh\cb_2$ in Fig.\ \ref{f_so} is the expanding connectable 
brick of $B_{j,2}\in \cb_1$.

The following lemma suggests that a large part of any longer brick $Q$ in $\wh S\times \wh\rr$ 
with $\part_{\mathrm{vt}}Q\subset \ca_{H_\nu}$ 
is occupied by a single brick in $\wh\cb_a$ for some $a$.

\begin{lemma}[Single brick occupation]\label{l_long_bricks}
There exists an integer $n_0$ depending only on $\xi(S)$ such that, for any brick $Q$ in $\wh S\times 
\wh\rr$ with $\xi(Q)\geq 1$ and $\part_{\mathrm{vt}}Q\subset \ca_{H_\nu}$, there is a set $\cb_Q=\{B_1,\dots,B_n\}$ of 
bricks in $Q$ with
$\part_{\mathrm{vt}}B_i\subset \ca_{H_\nu}$ and 
satisfying the following conditions.
\begin{enumerate}[\rm (i)]
\item
$n\leq n_0$ and $\ol{\bigcup \cb_Q}=\ol B_1\cup\cdots\cup \ol B_n\supset Q$.
\item
For at most one of the elements of  $\cb_Q$, say $B_1$, there exists a brick $C$ in $\wh\cb_a$ with 
$C^S=Q^S$ and $C\cap Q=B_1$ for some $a$.
For all other bricks $B_i$ of $\cb_Q$, $\part_{\mathrm{vt}}B_i\cap \Int Q$ is non-empty. 
\end{enumerate}
\end{lemma}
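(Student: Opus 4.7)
The plan is to use the uniqueness Lemma \ref{structure_sigma} to pin down the ``main'' brick $B_1$, and then to cover any remaining caps of $Q$ by a controlled number of bricks of strictly smaller cross-section.

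First I would apply Lemma \ref{structure_sigma} to the family $\bigcup_{a\geq 0}\wh\cb_a$: it contains at most one brick $C$ with $C^S=Q^S$. If such a $C$ exists and $C^\rr\cap Q^\rr\neq\eset$, I set $B_1:=C\cap Q$; then $B_1$ is a brick in $Q$ satisfying $B_1^S=Q^S$ and $\part_{\mathrm{vt}}B_1\subset \part_{\mathrm{vt}}Q\subset \ca_{H_\nu}$, and condition (ii) is fulfilled for $B_1$. The remainder $R:=Q\setminus\Int B_1$ (take $R:=Q$ if no $B_1$ is found) decomposes into at most two vertical caps $T_\pm$ attached to $\part_\pm Q$, each a brick with $T_\pm^S=Q^S$.

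Next I would cover each cap $T$ by bricks of strictly smaller cross-section. The key observation is that no hierarchy brick $D\in\bigcup_a\wh\cb_a$ can satisfy $D^S=Q^S$ and meet $\Int T$, for by Lemma \ref{structure_sigma} it would equal $C$, yet $T$ is disjoint from $C\cap Q=B_1$. For any hierarchy brick $D$ meeting $\Int T$, the intersection $D\cap Q$ has cross-section properly contained in $Q^S$, and its vertical boundary annuli lie in $\ca_{H_\nu}\cap \Int Q$. These local pieces can be grouped into bricks $B'\subset T$ with $B'^S\subsetneq Q^S$, $\part_{\mathrm{vt}}B'\subset\ca_{H_\nu}$, and $\part_{\mathrm{vt}}B'\cap \Int Q\neq\eset$, as required.

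The main obstacle is the uniform bound on the number of covering bricks depending only on $\xi(S)$. A cap $T$ may extend arbitrarily far vertically, so we must consolidate bricks of equal cross-section stacked vertically into single bricks. Lemma \ref{non_para} is crucial here: no two distinct components of $\ca_{H_\nu}$ are parallel in $S\times\rr$, so bricks sharing a common cross-section cannot appear stacked along $T^\rr$ without being mergeable. Since the combinatorial types of essential sub-surfaces of $Q^S$ number at most a function of $\xi(Q^S)\leq\xi(S)$, an induction on $\xi(Q^S)$ should yield the desired bound $n_0=n_0(\xi(S))$. The delicate point is verifying that the consolidation step preserves $\part_{\mathrm{vt}}B_i\subset\ca_{H_\nu}$, which requires careful bookkeeping of the level at which each boundary annulus is introduced.
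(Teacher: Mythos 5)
Your overall plan — pin down the unique brick with $C^S = Q^S$ via Lemma~\ref{structure_sigma}, then cover the rest of $Q$ by bricks with vertical boundary meeting $\Int Q$ — is in the spirit of the paper's argument and uses Lemma~\ref{structure_sigma} the same way (to show that all the ``main'' pieces discovered are the same $C$). However, the quantitative heart of the lemma, namely the bound on the number of covering bricks, is missing, and the tool you reach for to supply it does not do the job.

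The essential input you have left out is the \emph{tightness-of-geodesics} observation: since $\partial_{\mathrm{vt}}Q \subset \ca_{H_\nu}$, any $A(v_i)$ from the top-level tight geodesic $g_0$ with $A(v_i)^\rr \cap \Int Q^\rr \neq \eset$ must have its vertices at distance $\leq 1$ from every component of $\partial_{\mathrm{vt}}Q^S$; because $g_0$ is a geodesic, at most three consecutive entries $v_i$ can satisfy this. That is what bounds the number of subbricks cut out of $Q$ by $\ca_0$, and the same observation applied recursively to the tight geodesic $g_{D_i^\vee}$ in each expanding connectable brick $D_i^\vee$ gives the bound at every depth. Your substitute — Lemma~\ref{non_para} (non-parallelism of annuli) — does not yield any count: even with no two annuli parallel, a vertical cap could a priori be sliced by arbitrarily many annuli of differing cross-sections, and ``consolidating'' them into larger bricks would destroy the property $\partial_{\mathrm{vt}}B_i \subset \ca_{H_\nu}$ that the lemma demands. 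Lemma~\ref{non_para} is in fact not used in the paper's proof. Your proposed induction is also on the wrong parameter: the paper recurses on the complexity $\xi(D_i^\vee)$ of the expanding connectable brick (which strictly decreases from $\xi(\wh S)$ down toward $\xi(Q)$), not on $\xi(Q^S)$, which is fixed. As a result the recursion terminates in at most $\xi(\wh S)-\xi(Q)$ steps, multiplying the per-level bound $-3\chi(Q^S)$ to give $n_0 = (-3\chi(\wh S))^{\xi(\wh S)-1}$. Without the tight-geodesic counting, no version of your argument produces a uniform $n_0$.
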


We note that $B_i$ are not necessarily elements of $\cb_a$ $(a=0,\dots,k)$.

\begin{proof}
When $Q^S=\wh S$, the pair $C=\wh S\times \wh \rr\in \wh \cb_0$ and $\cb_Q=\{Q\}$ satisfy the 
conditions (i) and (ii).
So we may assume that $Q^S\neq \wh S$ or equivalently $\part_{\mathrm{vt}}Q\neq \eset$.
In particular, $\xi(\wh S)>1$.
Recall that, for each entry $v_i$ of the tight geodesic $g_0=\{v_i\}$ in $\wh S$, $A(v_i)$ is contained in $\ca_{H_\nu}$.
Since $\part_{\mathrm{vt}}Q\subset \ca_{H_\nu}$, $A(v_i)^{\rr}\cap \Int Q^{\rr}\neq \eset$ means that 
$d_{\cc(\wh S)}(w_i,x)\leq 1$ for any vertices $w_i$ of $v_i$ and any component $x$ of 
$\part_{\mathrm{vt}}Q^S$.
It follows that $A(v_i)^{\rr}\cap \Int Q^{\rr}\neq \eset$ for at most three succeeding entries $v_i$ of $g_0$.
Thus the brick decomposition of $(Q,\ca_0\cap Q)$ consists of at most $-3\chi(Q^S)$ subbricks 
$C_1,\dots,C_m$ of $Q$.
Let $\cb_Q^{(0)}$ be the set of $C_i$ with $\part_{\mathrm{vt}} C_i\cap \Int Q\neq \eset$.
For any $C_i$ not in $\cb_Q^{(0)}$, there exists a unique $D_i$ of $\cb_1$ with $D_i\cap Q\supset C_i$.
Let $\cb_Q^{(1)}$ be the set of $C_i$ with $D_i^S=Q^S$.

Suppose that $C_i$ is not in $\cb_Q^{(0)}\cup \cb_Q^{(1)}$.
Then $Q^S$ is a proper subsurface of $D_i^S$ and $\Int D_i\cap \part_{\mathrm{vt}}Q$ is not empty.
We repeat the argument as above for $(D_i^\vee,D_i^\vee\cap Q)$ instead of $(\wh S\times \wh\rr,Q)$, 
where $D_i^\vee$ is the expanding connectable brick of $D_i$.
Then we have the sets $\cb_{D_i^\vee\cap Q}^{(0)}$ and $\cb_{D_i^\vee\cap Q}^{(1)}$ of bricks in $D_i^\vee\cap Q$ 
as above.
Since $1<\xi(D_i^\vee)<\xi(\wh S)$, this repetition finishes at most $\xi(\wh S)-\xi(Q)$ times.
Eventually we have at most $(-3\chi(Q^S))^{\xi(\wh S)-\xi(Q)}$ bricks $B_j'$ in $Q$ with 
$\bigcup_j \ol{B_j'}\supset Q$, $\part_{\mathrm{vt}}B_j'\subset \ca_{H_\nu}$ such that either $\part_{\mathrm{vt}}B_j'\cap \Int Q\neq \eset$ or 
there exists an element $D_j^\vee\in \wh\cb_a$ for some $a$ with $D_j^\vee\supset B_j'$ and 
$D_j^{\vee S}=Q^S$.
By Lemma \ref{structure_sigma}, all $D_j^\vee$ appeared in the latter case are the same brick $C$.
The set $\cb_Q$ consisting of all $B_j'$ in the former case and $Q\cap C$ (if the latter case occurs) 
satisfies 
the conditions (i) and (ii) by setting $n_0=(-3\chi(\wh S))^{\xi(\wh S)-1}$.
\end{proof}

\section{The model manifold}\label{md}

We will define the model manifold and a piecewise Riemannian metric on it as in \cite[Section 8]{mi2}.

A constant $c$ is said to be \emph{uniform} if $c$ depends only on the topological type of $S$ and previously 
determined uniform constants, and independent of the end invariants $\nu=(\nu_-,\nu_+)$.
Throughout the remainder of this paper, for a given constant $k$, a uniform constant $c(k)$ means that it 
depends only on previously determined uniform constants and $k$.

\subsection{Metric on the brick union}\label{M_on_bu}
Let $\ca=\ca_{H_\nu}$ be the annulus union associated to $H_\nu$ given in Section \ref{Hie} 
and $\cv$ a v.s.-torus union with the geodesic core $\ca$.
Let $\bsmcB$ be the brick decomposition of $(S\times \wh \rr,\cv)$ and let $W=\bigcup\bsmcB$.
Recall that for any $B\in\bsmcB$, $\xi(B)=\xi (B^S)$ is either zero or one.
Suppose that $\Sg_{0,3}$ is a hyperbolic three-holed sphere such that each component of $\part \Sg_{0,3}$ is a 
geodesic loop of length $\ve_1$, where $\ve_1$ is the constant given in Subsection \ref{hyp}.
Let $B_{0,3}$ be the product metric space $\Sg_{0,3}\times [0,1]$.
Let $\Sg_{0,4}$ be a four-holed sphere which has two essential simple closed curves $l_0,l_1$ with 
the geometric intersection number $i(l_0,l_1)=2$, and let $B_{0,4}=\Sg_{0,4}\times [0,1]$ topologically.
Let $A_i$ $(i=0,1)$ be a regular neighborhood of $l_i \times \{i\}$ in $\Sg_{0,4}\times \{i\}$.
Suppose that $B_{0,4}$ has a piecewise Riemannian metric such that each component of $\Sg_{0,4}\times \{i\}
\setminus  \Int A_i$ is isometric to the hyperbolic surface $\Sg_{0,3}$, each component of $A_0\cup A_1\cup \part_{\mathrm{vt}}B$ 
is isometric to the product annulus $S^1(\ve_1)\times [0,1]$ and 
$\dist_{B_{0,4}}(\part_-B_{0,4},\part_+ B_{0,4})=1$, where $S^1(\ve_1)$ is a round circle in the Euclidean plane of 
circumference $\ve_1$.
Let $\Sg_{1,1}$ be a fixed one-holed torus $\Sg_{1,1}$ with geodesic boundary of length $\ve_1$ and 
essential simple closed curves $l_0,l_1$ with $i(l_0,l_1)=1$.
Then a piecewise Riemannian metric on $B_{1,1}=\Sg_{1,1}\times [0,1]$ is defined similarly.
We note that these metrics are independent of $\nu$.

For any element $B\in \bsmcB$ of type $(i,j)\in \{(0,3),(0,4),(1,1)\}$, consider a diffeomorphism  
$h_B:B_{i,j}\to B$ such that $h_B(\part_{\mathrm{vt}}B_{i,j})=\part_{\mathrm{vt}}B$ and 
moreover $h_B(A_\pm)=\part_\pm B\cap \cu$ when $\xi(B)=1$, where $A_-=A_0$ and $A_+=A_1$.
One can choose these homeomorphisms so that, for any $B,B'$ in $\bsmcB$ with 
$F=\part_+ B\cap \part_- B'\neq \eset$, $(h_B|_{h_B^{-1}(F)})\circ (h_{B'}|_F)^{-1}$ is 
an isometry.
Then $W$ has the piecewise Riemannian metric induced from those on $B_{0,3},B_{0,4},B_{1,1}$ via 
embeddings $h_B:B\to W$.
Since any automorphism $\eta:\Sg_{0,3}\to \Sg_{0,3}$ is isotopic to a unique isometry, the metric on $W$ is 
uniquely determined up to ambient isotopy.

\subsection{Construction of the model manifold}\label{C_mm}

We extend $W$ to the manifold $M_\nu [0]$ with piecewise Riemannian metric as in \cite[Subsections 3.4 and 8.3]{mi2}.
For any subset $C$ of $S$, we set $C\times \{\infty\}=C^{\{+\}}$ and $C\times \{-\infty\}=C^{\{-\}}$.

Let $\cv_{\mathrm{p.c.}}$ (resp.\ $\cv_{\mathrm{g.f.}}$) be the union of components $U$ of $\cv$ such that the closure $\ol U$ in $S\times \wh\rr$ contains a component of $\bsq_-^{\{-\}}\cup \bsq_+^{\{+\}}$ 
(resp.\ $\bsr_-^{\{-\}}\cup \bsr_+^{\{+\}}$), where $\bsr_\pm=\bigcup_{F_i\in\mathcal{GF}_\pm}\bsr_i$.
If we denote the complement $\cv\setminus (\cv_{\mathrm{p.c.}}\cup \cv_{\mathrm{g.f.}})$ by 
$\cv_{\mathrm{int.}}$, then $\cv$ is represented by the disjoint union
$$
\cv=\cv_{\mathrm{int.}}\cup \cv_{\mathrm{g.f.}}\cup \cv_{\mathrm{p.c.}}.
$$
For any $F_i$ in $\mathcal{GF}_+$ (resp.\ in $\mathcal{GF}_-$), we suppose that $F_i=F_i^{\{+\}}$ (resp.\ 
$F_i=F_i^{\{-\}}$) and denote the closure of $(F_i\cap S^{\{\pm\}})\setminus \ol\cv_{\mathrm{p.c.}}$ in $S^{\{\pm\}}$ by $F_i'$, see Fig.\ \ref{f_model}\,(a).
Thus $F_i'$ is a compact surface obtained from $F_i$ by deleting the parabolic cusp components. 
For the conformal structure $\sg_i\in \mathrm{Teich}(F_i)$ at infinity given in 
Subsection \ref{hyp}, consider the conformal rescaling $\tau_i$ of 
$\sg_i\in \mathrm{Teich}(F_i)$ such that $\tau_i/\sg_i$ is a continuous map which is equal to $1$ on 
$F_i(\sg_i)_{[\ve_1,\infty)}$ and each component of $F_i(\sg_i)_{(0,\ve_1]}$ is a Euclidean cylinder with 
respect to the $\tau_i$-metric.
There exists a piecewise Riemannian metric $\upsilon_i$ on $F_i'$ such that 
$F_i'(\upsilon_i)_{(0,\ve_1]}$ is equal to $F_i'\cap \ol\cu_{\mathrm{g.f.}}$, 
each component of $F_i'(\upsilon_i)_{(0,\ve_1]}$ is isometric to a Euclidean cylinder $S^1(\ve_1)\times [0,n]$ 
with $n\in \nn$, 
and each component of $F_i'(\upsilon_i)\setminus \Int F_i'(\upsilon_i)_{(0,\ve_1]}$ is isometric to $\Sg_{0,3}$. 
It is not hard to choose such a metric $\upsilon_i$ so that the identity $F_i'(\tau_i)
\to F_i'(\upsilon_i)$ is uniformly bi-Lipschitz.
Note that our $\upsilon_i$ corresponds to the metric ${\sg^m}'$ given in \cite[Subsection 8.3]{mi2}.
Endow the union $R_i=F_i'\times [-1,0]\cup \part F_i'\times [0,\infty)$ with a piecewise Riemannian metric such 
that (i) $F_i'\times \{-1\}$ is equal to $F_i'(\upsilon_i)$, (ii) $F_i'\times \{0\}\cup 
\part F_i'\times [0,\infty)$ is isometric $F_i(\tau_i)$ via an isometry whose restriction on $F_i'$ is the 
identity, (iii) $\part F_i'\times [-1,0]$ is a Euclidean cylinder of width 1 and (iv) the identity from 
$F_i'\times [-1,0]$ to the product metric space $F_i'(\upsilon_i)\times [-1,0]$ is 
uniformly bi-Lipschitz.
We call that the metric space $R_i$ is a \emph{boundary brick} associated to $\sg_i\in \mathrm{Teich}(F_i)$ 
for $F_i\in\mathcal{GF}_+$.
A \emph{boundary brick} associated to $\sg_j\in \mathrm{Teich}(F_j)$ 
for $F_j\in\mathcal{GF}_-$ is defined similarly.
Then $M_\nu[0]$ is the metric space obtained by attaching $R_i$ to $W$ for any $F_i\in \mathcal{GF}_a$ $(a=\pm)$ 
by the isometry 
$(\part_a B_1\cup\cdots\cup \part_a B_m)\times \{-1\}\to \part_a B_1\cup\cdots\cup \part_a B_m$ 
isotopic to the identity, where $B_1,\dots,B_m$ are the elements of $\bsmcB$ meeting $F_i'$ 
non-trivially, see Fig.\ \ref{f_model}\,(b).
\begin{figure}[hbtp]
\centering
\scalebox{0.125}{\includegraphics[clip]{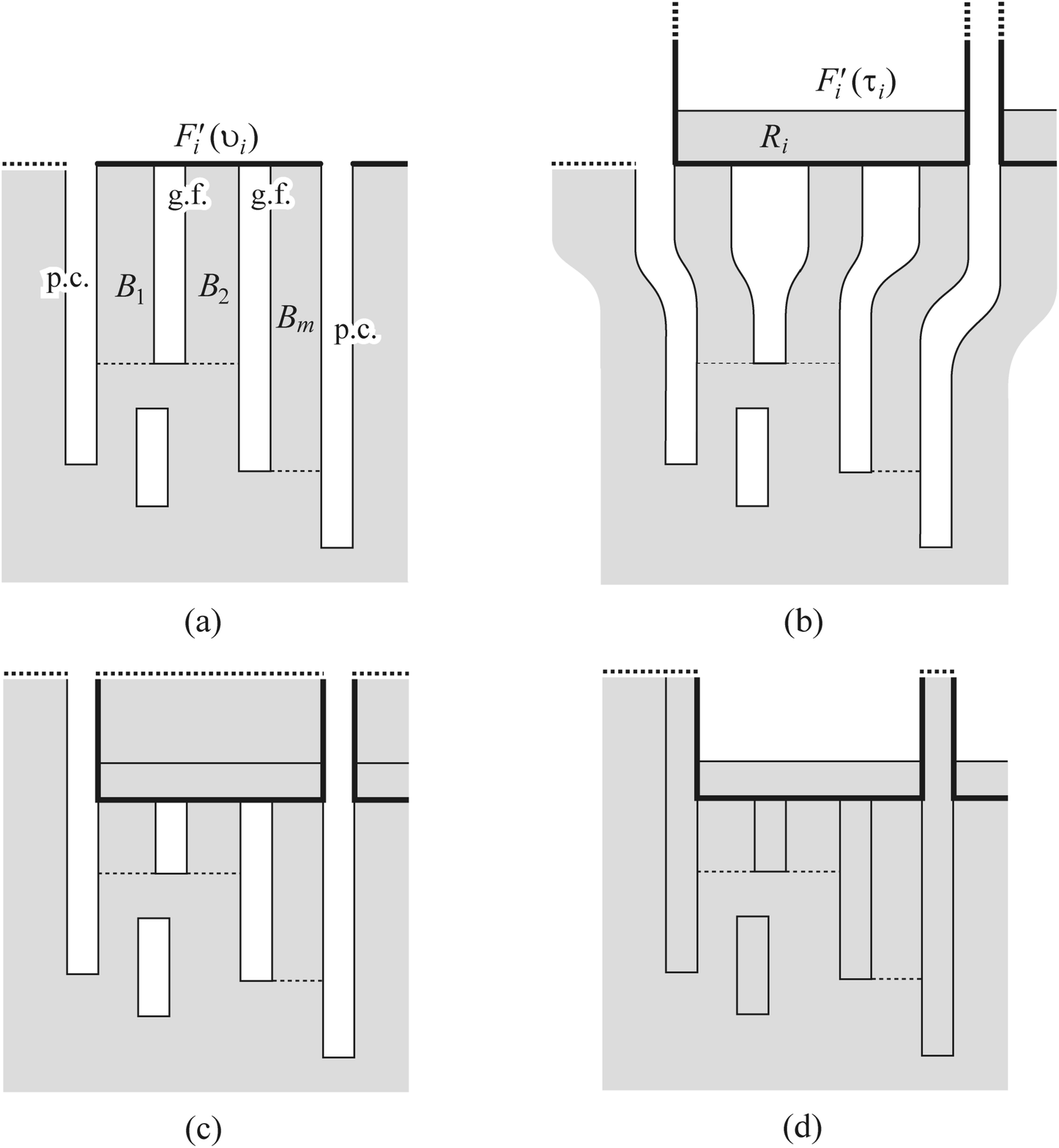}}
\caption{(a) Each white rectangle labeled with `p.c.' (resp.\ `g.f.') represents a component of $\cv_{\mathrm{p.c.}}$ (resp.\ $\cv_{\mathrm{g.f.}}$).
The shaded regions in (a)--(d)  represent $W$, $M_\nu [0]$, $M\!E_\nu[0]$ and $M_\nu$ respectively.}
\label{f_model}
\end{figure}

Extend furthermore $M_\nu[0]$ by attaching the spaces $F_i\times [0,\infty)$ with metric 
$ds^2=\tau_i e^{2r}+dr^2$ $(r\in [0,\infty))$ for $F_i\in \mathcal{GF}_a$ $(a=\pm)$ to $M_\nu[0]$ by identifying 
$F_i\times \{0\}$ with the `outer boundary' $F_i'\times \{0\}\cup \part F_i'\times [0,\infty)$ of $R_i$.
We set the extended manifold $M_\nu[0]\cup E_\nu$ by $M\!E_\nu[0]$, where $E_\nu=\bigcup_{F_i\in \mathcal{GF}_+\cup \mathcal{GF}_-} F_i\times [0,\infty)$.
From our construction, we can re-embed $M\!E_\nu[0]$ to $S\times \rr$ so that there exists a homeomorphism 
$\eta:\cv\to \wh S\times \rr\setminus M\!E_\nu[0]\subset \wh S\times \rr$ isotopic to the inclusion 
$\cv\subset \wh S\times \rr$ 
and such that, for any 
component $U$ of $\cv\setminus \cv_{\mathrm{g.f.}}$, $\eta|_U$ is the identity, see Fig.\ \ref{f_model}\,(c).
We denote   
$\eta(\cv_{\mathrm{int.}})$ by $\cu_{\mathrm{int.}}$, $\eta(\cv_{\mathrm{g.f.}})$ by $\cu_{\mathrm{g.f.}}$ 
and $\eta(\cv_{\mathrm{p.c.}})\cup \cu_{(\wh S\setminus S)}$ by $\cu_{\mathrm{p.c.}}$ respectively, 
where $\cu_{(\wh S\setminus S)}=(\wh S\sm S)\times \rr$. 
Then the complement $\cu=\wh S\times \rr\setminus M\!E_\nu[0]$ is represented by the disjoint union
\begin{equation}\label{cu}
\cu=\cu_{\mathrm{int.}}\cup \cu_{\mathrm{g.f.}}\cup \cu_{\mathrm{p.c.}}.
\end{equation}
For any component $U$ of $\cu$, the frontier $\part U$ of $U$ in $\wh S\times \rr$ 
is a torus if $U\subset \cu\setminus \cu_{\mathrm{p.c.}}$, otherwise $\part U$ is an open annulus.
We set here
$$M_\nu=M_\nu[0]\cup \cu\quad\mbox{and}\quad M\!E_\nu=M_\nu\cup E_\nu\ (=\wh S\times \rr).$$

\subsection{Meridian coefficients}\label{ss_meridian}
Let $U=U(v)$ denote the component of $\cu\setminus \cu_{(\wh S\setminus S)}$ such that $\eta^{-1}(U)\subset \cv$ is a v.s.-torus with geodesic core $A(v)$.
From our construction of the metric on $M_\nu [0]$, any component $\part U(v)$ is a 
Euclidean cylinder which has the foliation $\cf_U=\cf_v$ consisting of geodesic longitudes of length $\ve_1$.
For any complex number $z$ with $\mathrm{Im}(z)>0$ and $\eta>0$, we denote the quotient map 
$\mathbf{C}\to \mathbf{C}/\eta(\zz+z\zz)$ by $\pi_{z,\eta}$. 
If $U\subset \cu\setminus \cu_{\mathrm{p.c.}}$, then we have a unique $\omega\in \mathbf{C}$ with 
$\mathrm{Im}(\omega)>0$ such that there exists an 
orientation-preserving isometry from the quotient space $\mathbf{C}/\ve_1(\zz+\omega\zz)$ to $\part U$ which 
maps $\pi_{w,\ve_1}(\rr)$ (resp.\ $\pi_{w,\ve_1}(\omega\rr)$) to a longitude (resp.\ a meridian) of $U$.
We denote the $\omega$ by $\omega_M(U)$ or $\omega_M(v)$ and call it the \emph{meridian coefficient} of $\part U$.
If $U\subset \cu_{\mathrm{p.c.}}$, then we define $\omega_M(U)=\sqrt{-1}\infty$.
Note that $\ve_1\mathrm{Im}(\omega_M(U))$ is a positive integer whenever $U\subset \cu\setminus \cu_{\mathrm{p.c.}}$.
In fact, the brick decomposition $\bsmcB$ induces the decomposition on $\part U$ consisting of two 
horizontal annuli with integer width and $\ve_1\mathrm{Im}(\omega_M(U))-2$ vertical annuli of width one.

For any integer $k>0$, consider the union $\cu[k]$ of components $U$ of $\cu$ with $|\omega_M(U)|\geq k$ and 
$$M_\nu [k]=M_\nu[0]\cup (\cu\setminus \cu[k])\quad\mbox{and}\quad M\!E_\nu[k]=M_\nu[k]\cup E_\nu.$$
Thus $M_\nu=M_\nu[k] \cup \cu[k]$ and $M\!E_\nu=M\!E_\nu[k]\cup \cu[k]$.
We suppose that each component $U$ of $\cu\setminus \cu[k]$ has a Riemannian metric  
extending the Euclidean metric on $\part U$ and isometric to a hyperbolic tube with geodesic core.
These metrics define piecewise Riemannian metrics on $M_\nu[k]$ and $M\!E_\nu[k]$.

\section{The Lipschitz model theorem}\label{Lip}

The Lipschitz Model Theorem given in \cite{mi2} is a homotopy equivalence map 
from $M_\nu$ to the augmented core $\wh C_\rho$ of $N_\rho$ such that the restriction to $M_\nu[k]$ is a $K$-Lipschitz map 
for some uniform constant $K$ independent of $\nu,\rho$.
The following is the precise statement.

\begin{theorem}[Lipschitz Model Theorem]\label{lm}
There exists a degree-one, homotopy equivalence map $f:M_\nu\to \wh C_\rho$ with $\fd (f)=\rho$ and satisfying 
the following conditions, where $K\geq 1,k\in \nn$ are constants independent of $\nu,\rho$.
\begin{enumerate}[\rm (i)]
\item
The image $\bbt[k]=f(\cu[k])$ is a union of components of $N_{\rho(0,\ve_1)}$ with 
$\bbt[k]\supset N_{\rho(0,\ve_2)}$ for some uniform constant $0<\ve_2\leq \ve_1$ and 
the restriction $f|_{\cu[k]}:\cu[k]\to \bbt[k]$ defines a bijection between the components of $\cu[k]$ and 
$\bbt[k]$.
\item
$f(M_\nu[k])=\wh C_\rho[k]$ and the restriction $f|_{M_\nu[k]}:M_\nu[k]\to \wh C_{\rho}[k]$ is 
a $K$-Lipschitz map, where $\wh C_\rho[k]=\wh C_\rho\setminus \bbt[k]$.
\item
The restriction $f|_{\part M_\nu}:\part M_\nu\to \part\wh C_\rho$ is a $K$-bi-Lipschitz homeomorphism 
which can be extended to a $K$-bi-Lipschitz map $f':E_\nu\to E_N$ and moreover to a conformal 
map from $\part_\infty M\!E_\nu$ to $\part_\infty N_\rho$.
(Moreover, one can construct the map $f$ so that, for any boundary brick $R_i$, $f|_{R_i}:R_i\to f(R_i)$ is $K$-bi-Lipschitz 
and $f^{-1}(f(R_i))=R_i$.)
\end{enumerate}
\end{theorem}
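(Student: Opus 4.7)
The plan is to build the map $f$ brick by brick, using the two key lemmas discussed in the introduction: the Length Upper Bounds Lemma, which realizes the vertices of tight geodesics in $H_\nu$ by geodesic loops in $N_\rho$ of uniformly bounded length, and Lemma 10.1 of \cite{mi2}, which matches every v.s.-torus $U\in \cu[k]$ with a Margulis tube $T_U\subset N_\rho$ of sufficiently short core geodesic. Together these lemmas produce a geometric skeleton in $\wh C_\rho$ that mirrors the combinatorial skeleton $(\ca_{H_\nu},\bsmcB)$ of $M_\nu$. From the outset I would choose $k$ and a uniform $\ve_2\leq \ve_1$ so that $\bbt[k]=\bigcup_{U\in \cu[k]} T_U$ contains $N_{\rho(0,\ve_2)}$ and so that the matching $U\mapsto T_U$ is a bijection onto the components of $\bbt[k]$; property (i) is then imposed by fiat.

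Next, for each brick $B\in \bsmcB$ of type $(i,j)$ I would realize the critical horizontal surfaces $\part_{\pm}B$ by pleated (or simplicial hyperbolic) surfaces $f_B^\pm:\part_\pm B\to N_{\rho[\ve_1,\infty)}$ whose pleating loci are pants decompositions built from the short curves given by the Length Upper Bounds Lemma along the relevant vertices of $H_\nu$. Examples \ref{e_geom1} and \ref{e_geom2} together with Remark \ref{r_geom} then show that $f_B^\pm$ and $f_{B'}^\mp$ agree up to uniformly bounded Teichm\"uller distance and uniformly bounded ambient homotopy whenever $\part_+ B\cap \part_- B'\neq \eset$; this lets us interpolate across $B$ by a $K$-bi-Lipschitz map on a neighborhood of $f(B)$. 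The number of adjacency patterns that can actually occur is uniformly bounded thanks to Lemma \ref{l_long_bricks}, which forces any large brick $Q$ with $\part_{\mathrm{vt}}Q\subset \ca_{H_\nu}$ to be covered by at most $n_0$ subbricks of $\cb_Q$, at most one of which is not pinned by the tight hierarchy structure. This finite combinatorics turns pointwise bounded geometry into a uniform Lipschitz constant, giving property (ii).

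For property (iii), on each boundary brick $R_i$ associated to $\sg_i\in \mathrm{Teich}(F_i)$ with $F_i\in \mathcal{GF}_\pm$, the conformal structure at infinity provides a canonical identification of $F_i\times [0,\infty)$ with the corresponding geometrically finite relative end of $N_\rho$ which is conformal on $\part_\infty$ and uniformly bi-Lipschitz on the end $E_{F_i}\subset E_N$ (because the $\tau_i$-metric was designed to match the visual metric near the end). A uniform bi-Lipschitz collar then extends this identification across $R_i$ while remaining compatible with the interior Lipschitz map built above; arranging the collars to be disjoint from each other and from $\cu[k]$ gives the condition $f^{-1}(f(R_i))=R_i$. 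The parabolic cusps are handled by sending $\cu_{\mathrm{p.c.}}$ isometrically onto the corresponding $\ve_1$-parabolic neighborhoods via the identification of Euclidean cylinders. Finally, degree one and the homotopy-equivalence property follow because the constructed $f$ is a proper map inducing $\rho$ on $\fd$, and any two proper maps $\wh S\times \rr\to N_\rho$ inducing $\rho$ are properly homotopic.

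The main obstacle I anticipate is controlling the map across the vertical boundary of bricks adjacent to v.s.-tori with intermediate meridian coefficient (those $U$ with $|\omega_M(U)|$ bounded but not small). On $\partial U$ the model prescribes exactly $\ve_1\mathrm{Im}(\omega_M(U))-2$ vertical annuli of width one, while the image in $N_\rho$ must twist around the corresponding geodesic (possibly non-Margulis) core the correct number of times without accumulating distortion. Controlling this requires a careful compatibility between the combinatorial twisting encoded by $\bsmcB$ on $\partial U$ and the actual geometric twisting of a pleated annulus in $N_\rho$; I would handle it by a geometric limit argument in the spirit of Example \ref{e_geom1}, using Lemma \ref{structure_sigma} to rule out the degenerate configurations in which a single brick of $\wh\cb_a$ would be forced to cover $U$ ambiguously. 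Once this twisting estimate is folded into $K$, the remaining verifications are routine.
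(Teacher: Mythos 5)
The paper does not actually give its own proof of this theorem. Subsection \ref{M_const} explicitly defers to Steps 1--6 of \cite[Section 10]{mi2} for the construction of the Lipschitz map $f_6$; the only new contribution to the Lipschitz Model Theorem in this paper is the shorter alternative proof of Lemma \ref{meridian}, which replaces Minsky's Lemma 10.1. Your proposal is more ambitious, sketching a full construction of $f$, and its broad outline --- pleated (or simplicial hyperbolic) surfaces on horizontal cores, uniform bounded geometry from the Length Upper Bounds Lemma, interpolation across $\xi=0$ and $\xi=1$ bricks --- is consistent in spirit with the Minsky construction the paper summarizes.

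There are, however, two substantive problems. First, the bijection part of (i) cannot simply be ``imposed by fiat'': the paper obtains it from Lemma \ref{non_para} (no two components of $\ca_{H_\nu}$ are parallel in $S\times\rr$), and without that input nothing forbids two distinct v.s.-tori of $\cu[k]$ from landing in the same Margulis tube. Your proposal omits this entirely. Second, you misattribute the roles of several tools: Lemma \ref{l_long_bricks}, Examples \ref{e_geom1}--\ref{e_geom2}, Remark \ref{r_geom}, and Lemma \ref{structure_sigma} all appear in Section \ref{bLip} as tools for the \emph{bi-Lipschitz} theorem, not for the Lipschitz theorem. The uniform Lipschitz constant in (ii) comes directly from the fact that every brick of $\bsmcB$ has $\xi\in\{0,1\}$ with bounded-length boundary curves realized by pleated surfaces; no single-brick-occupation or geometric-limit machinery is needed for the one-sided bound. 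Relatedly, the ``main obstacle'' you identify --- controlling the twist around $\partial U$ for v.s.-tori of intermediate meridian coefficient --- conflates a Lipschitz concern with a bi-Lipschitz one: the one-sided bound requires only that the interpolated map not expand, which is automatic, while the genuinely hard content (that the map not collapse the twist) is exactly what Lemmas \ref{non_ret} and \ref{rear} in Section \ref{bLip} later establish. Lemma \ref{structure_sigma} is a uniqueness-of-bricks statement and gives no twist estimate, so invoking it there does not resolve the obstacle you raise.
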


The proof starts with the restriction $f_0:M_\nu\to N_\rho$ of a marking-preserving homeomorphism 
$S\times \rr\to N_\rho$.
Minsky's proof needs the following two lemmas which correspond to Lemmas 7.9 and 10.1 in \cite{mi2} respectively.

\begin{lemma}[Length Upper Bounds]\label{ub}
There exists a uniform constant $d_0$ such that, for any vertex $v$ appeared in $H_\nu$, $l_\rho(v)\leq d_0$.
\end{lemma}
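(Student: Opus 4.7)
The plan is to argue by contradiction via a geometric limit, following the Bowditch--Soma strategy mentioned in the introduction. Assume the contrary: there are representations $\rho_n$, hierarchies $H_{\nu_n}$, and vertices $v_n$ appearing in $H_{\nu_n}$ with $l_{\rho_n}(v_n)\to\infty$.

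First I would dispose of the boundary vertices of $H_{\nu_n}$, namely those belonging to $\bsp_-\cup\bsp_+$. Parabolic curves in $\bsq_\pm$ have length zero by construction; curves in the pants decompositions $\bsr_\pm$ have length at most $L$ on the conformal structures at infinity, which yields uniformly bounded length in $N_{\rho_n}$ by an equidistant-surface comparison in the geometrically finite ends; and any sequence of vertices approximating an ending lamination $\lambda_j$ is realized by pleated surfaces exiting the corresponding simply degenerate end, with length bounded by a constant depending only on $\ve_1$ and the topological type of $S$. Thus the failure, if it occurs at all, must occur at interior vertices of some tight geodesic $g_{B_n}$ in $H_{\nu_n}$.

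Next I would proceed by descending induction on the level of the brick $B_n$ containing $g_{B_n}=\{v_i\}$. The inductive hypothesis gives pleated maps $h_-, h_+:\wh S\to N_{\rho_n}$ realizing the curve components of $\boldsymbol{i}(g_{B_n})$ and $\boldsymbol{t}(g_{B_n})$ as geodesics of uniformly bounded length, with $\fr(B_n^S)$ mapped to geodesics of length $\ve_1$. The heart of the argument is to slide a pleated map one vertex at a time along $g_{B_n}$ from $h_-$ to $h_+$, producing for each interior vertex $v_i$ a pleated map $h_i$ whose image of $B_n^S$ lies in a uniformly bounded region of $N_{\rho_n[\ve_1,\infty)}$. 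The tightness condition is what makes this possible: consecutive vertices $v_{i-1}$ and $v_{i+1}$ are disjoint in $\wh S$ and hence are simultaneously realized on a single pleated map, while the tight-sequence identification of $v_i$ with the non-peripheral components of $\fr(F_{i-1}^{i+1})$ forces $v_i$ to appear as a controlled curve on this pleated surface. Lemma~\ref{structure_sigma} and Lemma~\ref{l_long_bricks} ensure that the pleated surfaces arising from one tight geodesic are not corrupted by bricks of other levels, replacing the subsurface-projection bookkeeping of the original proof.

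Assuming the sliding step, the contradiction is packaged by a standard geometric-limit argument. Place a base point $x_n$ on the pleated surface realizing a vertex of $g_{B_n}$ adjacent to $v_n$; by Example~\ref{e_geom1} a subsequence of $(N_{\rho_n},x_n)$ converges geometrically to $(N_\infty,x_\infty)$, and the uniformly bounded-diameter pleated surfaces along $g_{B_n}$ pass to the limit to a pleated map in $N_\infty$ realizing the geometric limit of $v_n$ on a bounded region. Hence the limiting length is finite, contradicting $l_{\rho_n}(v_n)\to\infty$. The main obstacle is the sliding step itself: controlling how the pleated surfaces deform as one moves along a tight geodesic so that the uniform bound genuinely propagates from the endpoints to every intermediate vertex. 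This is where the three-dimensional interpretation of hierarchies developed in Section~\ref{Hie} does the work that the Masur--Minsky hierarchy combinatorics does in the original proof.
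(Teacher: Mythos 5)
The paper does not actually give a proof of Lemma~\ref{ub}: immediately after its statement the text says ``an alternative proof of Lemma~\ref{ub} is given by \cite{bow2}, see also \cite{so}~\ldots'' and refers to Minsky's Lemma~7.9 in \cite{mi2}. So there is no in-paper proof to compare against; what you have written is a proposal for a new proof, and it should be judged on its own merits.

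On those merits, the ``sliding step'' has a genuine gap, and it is exactly the hard part of the Length Upper Bounds lemma. You claim that on a pleated surface realizing $v_{i-1}\cup v_{i+1}$ the curve $v_i=\fr(F_{i-1}^{i+1})$ is ``controlled.'' What one actually gets at each step is a bound on $l_\rho(v_i)$ in terms of $l_\rho(v_{i-1})$ and $l_\rho(v_{i+1})$ (e.g.\ via the fact that the geodesic representative of $\part F_{i-1}^{i+1}$ is no longer than the boundary of a regular neighborhood of $v_{i-1}\cup v_{i+1}$), so the bound degrades multiplicatively at each vertex. Iterating along a tight geodesic of length $n$ therefore yields a bound that grows exponentially in $n$, not a uniform bound; the content of the lemma is precisely that the bound does not degrade along arbitrarily long geodesics, and nothing in the vertex-by-vertex slide produces that. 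Bowditch's argument and the one in \cite{so} close this gap using the $\delta$-hyperbolicity of $\cc(F)$ and an a~priori bounds / geometric-limit scheme that is global along the geodesic, not a local induction.

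A secondary issue: you assert that the pleated maps $h_i$ have images in a ``uniformly bounded region of $N_{\rho_n[\ve_1,\infty)}$.'' When $g_{B_n}$ is long, its pleated realizations sweep out a region of unbounded diameter (they exit a degenerate end), so the geometric-limit step cannot be set up by placing one base point and capturing all the $h_i$ in a single chart. The geometric-limit argument in \cite{so} is careful to localize near the bad vertex $v_n$ and to use the curve-complex hyperbolicity to control how neighboring vertices of $g_{B_n}$ behave in the limit; without that input the argument does not close.

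\end{document}
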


Recall that $H_\nu$ is the hierarchy defined in Section \ref{Hie}.
For any curve $c$ in $M_\nu$, $l_\rho(c)$ denotes the length of the geodesic in 
$N_\rho$ freely homotopic to $f_0(c)$ if any and otherwise $l_\rho(c)=0$.
We also define $l_\rho(v)=l_\rho(c)$ for a curve $v$ in $S$ with $v=c^S$. 
As was stated in Introduction, an alternative proof of Lemma \ref{ub} is given by \cite{bow2}, 
see also \cite{so} where this lemma is proved by full geometric limit arguments along ideas in \cite{bow2}.

The other key lemma for the Lipschitz Model Theorem is replaced by the following 
lemma.
We will give a shorter proof of it.

\begin{lemma}\label{meridian}
Suppose that $\ve$ is any positive number and there exists a constant $L>0$ with $l_\rho(c)\leq 
L\mathrm{length}_{M_\nu[0]}(c)$ for any rectifiable curve $c$ in $M_\nu[0]$.
Then, there exists a constant $d_1$ depending only on $\ve,\ve_1, L$ such that, for any component $U(v)$ of $\cu$ 
with $|\omega_M(v)|> d_1$, $l_\rho(v)\leq \ve$.
\end{lemma}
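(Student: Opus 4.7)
The plan is to argue by contradiction via a geometric limit argument in the spirit of Examples \ref{e_geom1} and \ref{e_geom2}.  Suppose the conclusion fails: for some $\ve>0$ there exist sequences of end invariants $\nu_n$ and representations $\rho_n$ satisfying the hypothesis for a common constant $L$, together with components $U(v_n)\subset\cu$ with $|\omega_M(v_n)|\sto\infty$ yet $l_{\rho_n}(v_n)\ge\ve$.  Since a longitude of $\part U(v_n)$ has length $\ve_1$ in $M_{\nu_n}[0]$ and is freely homotopic to $v_n$ in $M_{\nu_n}$, the hypothesis yields $l_{\rho_n}(v_n)\le L\ve_1$; after passing to a subsequence, $l_{\rho_n}(v_n)\sto\ell_\infty\in[\ve,L\ve_1]$.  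Setting $g_n=\rho_n(v_n)$, I work in the cyclic cover $\wh N_n=\hh^3/\langle g_n\rangle$, in which $\wh\gamma_n$ is a closed geodesic of length $l_{\rho_n}(v_n)$, and in the cover $\wh M_n$ of $M_{\nu_n}[0]$ corresponding to the longitude $\langle v_n\rangle\subset \pi_1(M_{\nu_n}[0])$, on which the lift $\wh f_n:\wh M_n\sto\wh N_n$ of $f_0$ is defined.

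The component $\wh T_n$ of the preimage of $\part U(v_n)$ on which $\wh f_n$ is based is an infinite flat cylinder of circumference $\ve_1$, while the meridian of $\part U(v_n)$ lifts to an arc of length $\ve_1|\omega_M(v_n)|\sto\infty$ in $\wh T_n$.  Each point $z\in\wh T_n$ lies on a length-$\ve_1$ longitude whose $\wh f_n$-image is a loop of length $\le L\ve_1$ representing $g_n$, and a standard estimate in hyperbolic geometry gives a uniform $D=D(L,\ve_1,\ve)$ with $\wh f_n(\wh T_n)\subset\cn_D(\wh\gamma_n,\wh N_n)$, a solid tube of bounded volume in $\wh N_n$.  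Moreover, since $f_0$ is the restriction of a homeomorphism $M_\nu\sto N_\rho$, $\wh f_n|_{\wh T_n}$ is injective.  Choose base points $p_n\in\wh T_n$ and pass to geometric limits, using on the model side the locally uniform bounded geometry of the bricks near $\part U(v_n)$ from Section \ref{Hie} together with the single brick occupation Lemma \ref{l_long_bricks}: one obtains $\wh N_\infty=\hh^3/\langle g_\infty\rangle$ with closed geodesic $\wh\gamma_\infty$ of length $\ell_\infty\ge\ve$, an $L$-Lipschitz limit $\wh f_\infty:\wh M_\infty\sto\wh N_\infty$, and---since $|\omega_M(v_n)|\sto\infty$ pushes the meridian direction to infinity in the cover---an infinite flat cylinder $\wh T_\infty\cong S^1(\ve_1)\times\rr$ in $\wh M_\infty$ whose $\wh f_\infty$-image is confined to the bounded region $\cn_D(\wh\gamma_\infty,\wh N_\infty)$.

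The contradiction is drawn by considering the family of pairwise disjoint longitudes $\alpha_s$ ($s\in\rr$) of $\wh T_\infty$: each $\wh f_\infty(\alpha_s)$ is a loop of length $\le L\ve_1$ representing $g_\infty$ in $\wh N_\infty$, all of them lying in the bounded region $\cn_D(\wh\gamma_\infty,\wh N_\infty)$.  By injectivity inherited from the $\wh f_n$ and the lower bound $\ell_\infty\ge\ve$, the set of loops of bounded length representing $g_\infty$ and contained in $\cn_D(\wh\gamma_\infty,\wh N_\infty)$ forms a uniformly discrete family---no two such representatives can be arbitrarily close without collapsing via a parabolic limit, which is excluded by $\ell_\infty\ge\ve>0$---so the continuous family $\wh f_\infty(\alpha_s)$ $(s\in\rr)$ cannot inject continuously into it, the desired contradiction.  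The principal obstacle will be establishing cleanly this uniform-discreteness statement for loops of bounded length representing a loxodromic element inside a bounded solid tube, and handling the subcase in which $\mathrm{Im}(\omega_M(v_n))$ stays bounded while $|\mathrm{Re}(\omega_M(v_n))|\sto\infty$: there the limit cylinder has bounded vertical extent but infinite accumulated twisting, and a parallel argument using twist-equivariance of the meridian is required.
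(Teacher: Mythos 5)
Your approach diverges fundamentally from the paper's, which is a short, direct argument with no geometric limits: the paper first bounds $\mathrm{Im}(\omega_M(v))$ by noting that there are $\ve_1\mathrm{Im}(\omega_M(v))-2$ mutually non-homotopic pleated three-holed spheres whose boundaries contain the geodesic $\lambda$, all of whose $\ve$-thick parts lie in a uniformly bounded neighborhood of $\lambda$, forcing a bound $d=d(\ve,\ve_1)$; it then bounds $|\omega_M(v)|$ itself by applying the hypothesis to the shortest non-meridian transversal $m$ in $\part U$, whose length is at most $(d+1)\ve_1$, getting a bound on the covering degree of $f_0|_m$ over $\lambda$, hence on the intersection number of $m$ with a meridian, hence on $|\omega_M(v)|$ via a homology-class length estimate.

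Your proposal has several gaps, the most serious of which undermines the whole setup. First, you repeatedly assume that the image $\wh f_n(\wh T_n)$ is confined to $\cn_D(\wh\gamma_n,\wh N_n)$ because ``each longitude's $\wh f_n$-image is a loop of length $\le L\ve_1$''; but the hypothesis of the lemma controls $l_\rho(c)$, the \emph{geodesic length of the free homotopy class of $f_0(c)$}, not the length of the image curve $f_0(c)$ itself, so nothing in the hypothesis prevents the model map from sending $\wh T_n$ far away from $\wh\gamma_n$. Your argument implicitly strengthens the hypothesis to ``$f_0$ is $L$-Lipschitz,'' which is available in the application (Section \ref{M_const}) but is not what Lemma \ref{meridian} asserts, and the paper's proof pointedly does not need it for the $\mathrm{Im}$-bound. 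Second, the terminal contradiction rests on the claim that loops of bounded length representing $g_\infty$ inside $\cn_D(\wh\gamma_\infty,\wh N_\infty)$ ``form a uniformly discrete family''; this is false as stated, since one can perturb $\wh\gamma_\infty$ continuously within the tube while keeping the length and homotopy class fixed, so the existence of a continuous family of such loops $\wh f_\infty(\alpha_s)$ is no contradiction at all. What is discrete (and what the paper exploits) is the set of mutually \emph{non-homotopic} pleated surfaces through $\lambda$, which is a quite different statement. Third, as you candidly flag, the subcase where $\mathrm{Im}(\omega_M(v_n))$ is bounded while $\mathrm{Re}(\omega_M(v_n))\to\infty$ is left entirely unresolved; that twisting case is precisely where the $L$-dependence must enter, and the paper handles it via the covering-degree bound on the transversal $m$. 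Until those three issues are repaired, the proof does not go through, and even if it were repaired in the spirit you suggest, it would establish the lemma only under the stronger Lipschitz hypothesis and at the cost of substantial geometric-limit machinery that the paper's two-step elementary argument avoids.
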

\begin{proof}
Let $\lambda$ be the geodesic loop in $N_\rho$ freely homotopic to $f_0(v)$.
Suppose that $l_\rho(v)>\ve$.
If $\ve_1\mathrm{Im}(\omega_M(v))\geq n$, then there exist at least $n$ mutually non-homotopic pleated 
maps $p_j:F(\sigma_j)\to N_\rho$ such that each $p_j(\part F)$ contains $\lambda$ , where $F$ is a compact 
3-holed sphere.
Since $l_\rho(v)=\mathrm{length}_{N_\rho}(\lambda)>\ve$, all $p_j(F(\sigma_j)_{[\ve,\infty)})$ are contained in 
a uniformly bounded neighborhood of $\lambda$ in $N_{\rho[\ve,\infty)}$.
From this boundedness, we know that $\mathrm{Im}(\omega_M(v))$ is bounded by a constant $d$ depending 
only on $\ve$ and $\ve_1$.

Set $U(v)=U$ and let $m$ be the shortest geodesic in $\part U$ among all geodesics meeting a leaf $l$ of the foliation 
$\cf_v$ transversely in a single point.
The length of $m$ is at most $(d+1)\ve_1$.
If $m$ is a meridian of $U$, then $|\omega_M(v)|=\mathrm{length}_{\part U}(m)/\ve_1\leq d+1$.
Otherwise, $f_0|_m$ is homotopic to a cyclic covering $\eta:m\to \lambda$ whose 
degree is at most $L(d+1)\ve_1/\ve$. 
This means that the geometric intersection number $\alpha$ of $m$ with a meridian $m_0$ 
of $U$ is at most $L(d+1)\ve_1/\ve$.
Under a suitable choice of the orientations of $m$ and $l$, the homology class $[m_0]\in H_1(\part U,\zz)$ 
is represented by $[m]+\alpha[l]$ and hence
\begin{align*}
|\omega_M(v)|&=\frac{1}{\ve_1}\mathrm{length}_{\part U}(m_0)\leq \frac{1}{\ve_1}(\mathrm{length}_{\part U}(m)
+\alpha\mathrm{length}_{\part U}(l))\\
&\leq (d+1)\Bigl(1+\frac{L\ve_1}{\ve}\Bigr)=:d_1.
\end{align*}
This completes the proof.
\end{proof}

\subsection{Minsky's construction}\label{M_const}
Here we will review briefly how Minsky constructs the Lipschitz map.

Recall that, for each element $B$ of  the brick decomposition $\bsmcB$ of  $(S\times \wh \rr,\cv)$ defined 
in Subsection \ref{M_on_bu}, either $\xi(B)=0$ or 1 holds.
Let $\bsmcB_\part$ be the set of boundary bricks associated to elements of $\mathcal{GF}_+\cup \mathcal {GF}_-$.
In Subsection \ref{C_mm}, we re-embedded $M_\nu[0]=\bigcup (\bsmcB\cup \bsmcB_\part)$ into $S\times \rr$ so that 
$\cv$ is identified with $\cu\setminus \cu_{(\wh S\setminus S)}$, see Fig.\ \ref{f_model}.
For any element $B=F\times [a,b]$ of $\bsmcB$ with $\xi(B)=0$, let $F_B$ be the  horizontal core $F\times 
\bigl\{\frac{(b-a)}2\bigr\}$ of $B$ .
Then $f_0|_{F_B}:F_B\to N_\rho$ is homotopic to a pleated map $f_B$ such that, for each 
component $l$ of $\part F_B$, $f_B(l)$ is either a closed geodesic in $N_\rho$ or the ideal point of 
a parabolic cusp component of $N_{\rho(0,\ve_1)}$.
Fix a hyperbolic metric on $F$ isometric to $\Sg_{0,3}$.
By Length Upper Bounds Lemma (Lemma \ref{ub}), there exists a marking-preserving $K_1$-bi-Lipschitz map
$i_B:F\to F_B(\sigma_B)_{[\ve_0,\infty)}$ for some uniform constant $K_1\geq 1$, where $\ve_0$ is the 
constant given in Subsection \ref{hyp} and $\sigma_B$ is 
the hyperbolic structure on $F_B$ induced from that on $N_\rho$ via $f_B$.
Steps 1--6 in \cite[Section 10]{mi2} define a map $f_6:M_\nu\to N_\rho$ homotopic to $f_0$ and satisfying 
the following conditions.
\begin{enumerate}[\rm (a)]
\item
For any $B\in \bsmcB$ with $\xi(B)=0$, $f_6|_{F_B}=f_B\circ i_B$.
\item
For any vertex $v$ appeared in $H_\nu$ and satisfying $l_\rho(v)\leq \ve_1$, $f_6(U(v))$ is contained in a 
component of $N_{\rho(0,\ve_1)}$.
\item
For any $k\geq 0$, there exist uniform constants $L(k)\geq 1$ and $\ve(k)\in (0,\ve_0)$ such that the 
restriction $f_6|_{M_\nu[k]}$ is $L(k)$-Lipschitz and $f_6(M_\nu[k])\cap N_{\rho(0,\ve(k))}=\eset$.
\end{enumerate}

Applying Lemma \ref{meridian} to $f_6|_{M_\nu[0]}$ for $L=L(0)$, one can choose $k$ so that  
$l_\rho(v)\leq \delta$ for any $U(v)$ with $|\omega_M(v)|\geq k$, where $\delta>0$ is a constant less than $\ve_1/2$.
By the property (b), $f_6(U(v))$ is contained in a component $\bt(v)$ of $N_{\rho(0,\ve_1)}$.
Let $\bbt[k]$ be the union of all $\bt(v)$ with $|\omega_M(v)|\geq k$.
Lemma \ref{non_para} implies that $f_6$ defines a bijection between the components of $\cu[k]$ and $\bbt[k]$.
Here we may take the $k$ and hence $\delta$ so that $f_6(M_\nu[k])\cap \bt_{\delta}(v)=\eset$ for any 
component $U(v)$ of $\cu[k]$, where $\bt_{\delta}(v)$ is the component of $N_{\rho(0,\delta)}$ 
contained in $\bt(v)$.
Fixing such a $k$ and deforming $f_6$ by a homotopy whose support is contained in a neighborhood of 
$\cu[k]$ in $M_\nu$, we have a $K_7$-Lipschitz map $f_7$ with $f_7(\cu[k])=\bbt[k]$ and 
$f_7^{-1}(\bbt[k])= \cu[k]$.
Here we set $\ve_2=\ve(k)$ for the $k$.
A Lipschitz map $f=f_8$ is obtained by extending the definition of $f_7$ to $\cu_{\mathrm{p.c.}}$.
Minsky shows that the map $f$ is a proper degree one map satisfying the 
conditions of Theorem \ref{lm}.
The extension of $f$ to a $K$-bi-Lipschitz map $f':E_\nu\to E_N$ is proved by hyperbolic geometric arguments together with some differential geometric ones in \cite[Subsection 3.4]{mi2}.

\subsection{Additional properties of the Lipschitz map}\label{additional}

By the form (\ref{cu}) of $\cu$ and the property (i) of Theorem \ref{lm}, $\bbt[k]$ is represented as the disjoint union:
$$\bbt[k]=\bbt[k]_{\mathrm{int.}}\cup \bbt[k]_{\mathrm{g.f.}}\cup \bbt[k]_{\mathrm{p.c.}}.$$
We set $\wh g=(f\cup f'):M\!E_\nu\to N_\rho$ and consider the restriction
\begin{equation}\label{rest}
g=\wh g|_{M\!E_\nu[k]}:M\!E_\nu[k]\to N_\rho[k]:=N_\rho\setminus \bbt[k].
\end{equation}
Let $\ol{\cu[k]}$ be the closure of $\cu[k]$ in $M\!E_\nu[k]$.
Recall that a \emph{horizontal surface} in $M\!E_\nu[k]$ (resp.\ $M_\nu[k]$) is a connected surface $F$ in $S\times \{a\}$ (resp.\  $S\times \{a\}\cap M_\nu[k]$) for some $a\in \rr$ with $\Int F\cap \ol{\cu[k]}=\eset$ and $\part F\subset \ol{\cu[k]}$.

\begin{prop}\label{homeo}
For any horizontal surface $F$ in $M_\nu[k]$, the restriction $g|_F$ is properly homotopic an 
embedding $h:F\to N_\rho[k]$ which is uniformly bi-Lipschitz onto the embedded surface contained in the 
$1$-neighborhood of $g(F)$ in $N_\rho[k]$.
\end{prop}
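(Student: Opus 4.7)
The plan is to combine the Freedman--Hass--Scott theorem \cite{fhs} with a geometric limit argument patterned on Example \ref{e_geom2} and Remark \ref{r_geom}. First I establish that $g|_F$ is $\pi_1$-injective, then I produce a nearby embedded representative $h$, and finally I upgrade the Lipschitz bound to a uniform bi-Lipschitz bound.

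Since $F$ is a horizontal subsurface of a level surface of $M\!E_\nu=\wh S\times \rr$, the inclusion $F\hookrightarrow M\!E_\nu$ is $\pi_1$-injective, and a standard 3-manifold-topology argument shows that removing the solid tori $\cu[k]$ preserves this injectivity; hence $F\hookrightarrow M_\nu[k]$ is incompressible. Because $\wh g$ is a homotopy equivalence by Theorem \ref{lm} and $g=\wh g|_{M\!E_\nu[k]}$, the composition $g|_F:F\to N_\rho[k]$ is $\pi_1$-injective. After a small smoothing, the Freedman--Hass--Scott theorem yields a proper embedding $h:F\to N_\rho[k]$, properly homotopic to $g|_F$, contained in any preassigned small regular neighborhood of $g(F)$; in particular $h(F)\subset \cn_1(g(F),N_\rho[k])$. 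Following Remark \ref{r_geom}, one may further require $h$ to be a least-area map in its proper homotopy class subject to the boundary constraint on $\part\cu[k]$.

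For the uniform bi-Lipschitz constant, note that $F$ is assembled from standard pieces isometric to $\Sg_{0,3}$, $\Sg_{0,4}$, or $\Sg_{1,1}$ glued along $\ve_1$-geodesic collars, so $F$ has uniformly bounded local geometry. Since $g$ is $K$-Lipschitz by Theorem \ref{lm}(ii) and $h$ is within distance $1$ of $g(F)$, the map $h$ is $K'$-Lipschitz for some uniform $K'\geq 1$. Suppose, toward contradiction, that no uniform lower bi-Lipschitz bound exists. Then there are sequences $\rho_n$, horizontal surfaces $F_n$, and points $p_n,q_n\in F_n$ with $d_{F_n}(p_n,q_n)$ in a fixed compact subset of $(0,\infty)$ but $d_{N_{\rho_n}[k]}(h(p_n),h(q_n))\to 0$. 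Choosing base points at $h(p_n)$ and passing to a geometric limit $(N_\infty,x_\infty)$ of $(N_{\rho_n},h(p_n))$, the Ascoli-Arzel\`{a} theorem yields a uniform limit $\varphi:F_\infty\to N_\infty$ of the restrictions of $h$ to bounded neighborhoods of $p_n$ in $F_n$. By \cite{fhs} and Remark \ref{r_geom} this least-area limit $\varphi$ is again an embedding, and bounded local geometry of $F_\infty$ and $N_\infty$ forces $\varphi$ to be locally bi-Lipschitz, contradicting the assumed collapse $h(p_n),h(q_n)\to \varphi(p_\infty)=\varphi(q_\infty)$ with $d_{F_\infty}(p_\infty,q_\infty)>0$.

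The main obstacle is preserving the embedding property in the geometric limit, which is the reason for invoking Remark \ref{r_geom} and replacing $h$ by a least-area representative from the outset; a secondary technical point is to verify that the portion of $F$ used in the localization contributes enough embedded thick-part geometry for the limit domain to be non-degenerate, but this follows from the uniformly bounded local geometry of the standard pieces in the model.
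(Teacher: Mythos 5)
Your proposal correctly identifies the two relevant tools (FHS plus a geometric-limit argument patterned on Example~\ref{e_geom2} and Remark~\ref{r_geom}) and your argument for the $\pi_1$-injectivity of $g|_F$ is fine. However, there is a genuine gap at the central step, and it is precisely the step the paper's proof spends most of its effort on.

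You assert that FHS yields a proper embedding $h$ ``contained in any preassigned small regular neighborhood of $g(F)$.'' That is not what FHS gives. FHS gives an embedded representative in the proper homotopy class, but says nothing about how close it lies to $g(F)$ --- indeed the least-area representative could in principle wander far from $g(F)$. Moreover, before FHS even applies one must know that $g|_F$ is properly homotopic to an embedding \emph{in the correct ambient manifold}, which here is $N_\rho[k]=N_\rho\setminus\bbt[k]$, a manifold with a large collection of solid tori removed. The danger is twofold: (a) a priori the embedded representative produced by FHS could pass through a removed Margulis tube $\bbt\subset \bbt[k]$, so it would not be an embedding in $N_\rho[k]$ at all; and (b) even if it avoids the tubes, one needs to confine it to a controlled compact region before the geometric-limit argument for the bi-Lipschitz constant can get off the ground. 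The paper's proof handles (a) by first invoking Otal's unknottedness result \cite{ot} for the tubes $\bt_1^m$, applying FHS inside the compact set $R\cup Q_1\cup\cdots\cup Q_n$ (the $1$-neighborhood $R$ of $g(F)$ together with the bounded complementary pockets $Q_i$), and then running a careful inductive topological argument --- involving the separating surface $S_0$, the vertical annuli $A_i$, and the non-parallelness of the tubes from Lemma~\ref{non_para} --- to show that the embedding can be pushed off each offending tube $\bt$ contained in some $Q_i$, one tube at a time. That iteration finally places the embedding in $R\cup Q_{u_1}\cup\cdots\cup Q_{u_a}$ with each $Q_{u_j}$ disjoint from $\bbt[k]$, which is a subset of $N_\rho[k]$ of uniformly bounded geometry, and only then does the geometric-limit argument give the bi-Lipschitz bound. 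Your proposal skips all of this and simply declares the conclusion; the reduction to Remark~\ref{r_geom} via least-area surfaces does not supply the missing ``embeds in $N_\rho[k]$ near $g(F)$'' input --- it presupposes it.
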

\begin{proof}
Set $M\!E_\nu'=M\!E_\nu \setminus \cu_{(\wh S\sm S)}$ and $N_\rho'=N_\rho\setminus \bbt_{(\wh S\sm S)}$, 
where $\bbt_{(\wh S\sm S)}=\wh g(\cu_{(\wh S\sm S)}) \subset \bbt[k]_{\mathrm{p.c.}}$.
Then $M\!E_\nu[k]$ is a subset of $M\!E_\nu'$.
Suppose that $U_1,\dots,U_m$ are the components of $\cu[k]\setminus \cu_{\mathrm{p.c.}}$
such that the closure $\ol U_j$ in $M\!E_\nu[k]$ meets $\part F$ non-trivially.
Let denote $\wh g(U_j)=\bt_j$ and $U_1^m=U_1\cup\cdots\cup U_m$, $\bt_1^m=\bt_1\cup\cdots\cup \bt_m$.
Let $\{Q_1,\dots,Q_n\}$ be the set of components of $N_\rho'\setminus (g(F)\cup \bt_1^m)$ such that the closure 
of $Q_i$ in $N_\rho'$ is compact.
By Otal \cite{ot}, $\bt_1^m$ is unlinked in $N_\rho'$.
Hence, by \cite{fhs}, $g|_F$ is properly homotopic to an embedding in the union of the (closed) 
$1$-neighborhood $R$ of $g(F)$ in $N_\rho[k]$ and $Q_1,\dots,Q_n$.
Note that the union is also a compact set.
Suppose that $Q_1$ contains a component $\bt$ of $\bbt[k]$ and 
$U$ is the component of $\cu[k]$ with $\wh g(U)=\bt$.

There exists a properly embedded surface $S_0$ in $M\!E_\nu[k]$ with $S_0\supset F$ and such that the inclusion $S_0\subset 
M\!E_\nu'$ is a homotopy equivalence and  
one of the two components of $M\!E_\nu'\setminus S_0$, say $P$, is disjoint from $\ol U\cup U_1^m$. 
Fix a horizontal surface $S_1$ in $P$ sufficiently far away from $S_0$.
Then $\wh g|_{M\!E_\nu'\setminus (\ol U\cup U_1^m)}:M\!E_\nu'\setminus (\ol U\cup U_1^m)\to N_\rho'\setminus (\ol\bt\cup \bt_1^m)$ 
is properly homotopic to a map $\alpha$ such that $\alpha|_{S_1}$ is an embedding.
Let $P_0$ be the closure of the bounded component of $M\!E_\nu'\setminus S_0\cup S_1$, and let  
$A_i$ $(i=1,\dots,m)$ be a properly embedded vertical annulus in $P_0$ such that one of the components of $\part A_i$ is a longitude of $\part U_i$, see Fig.\ \ref{f_homeo}.
\begin{figure}[hbtp]
\centering
\scalebox{0.125}{\includegraphics[clip]{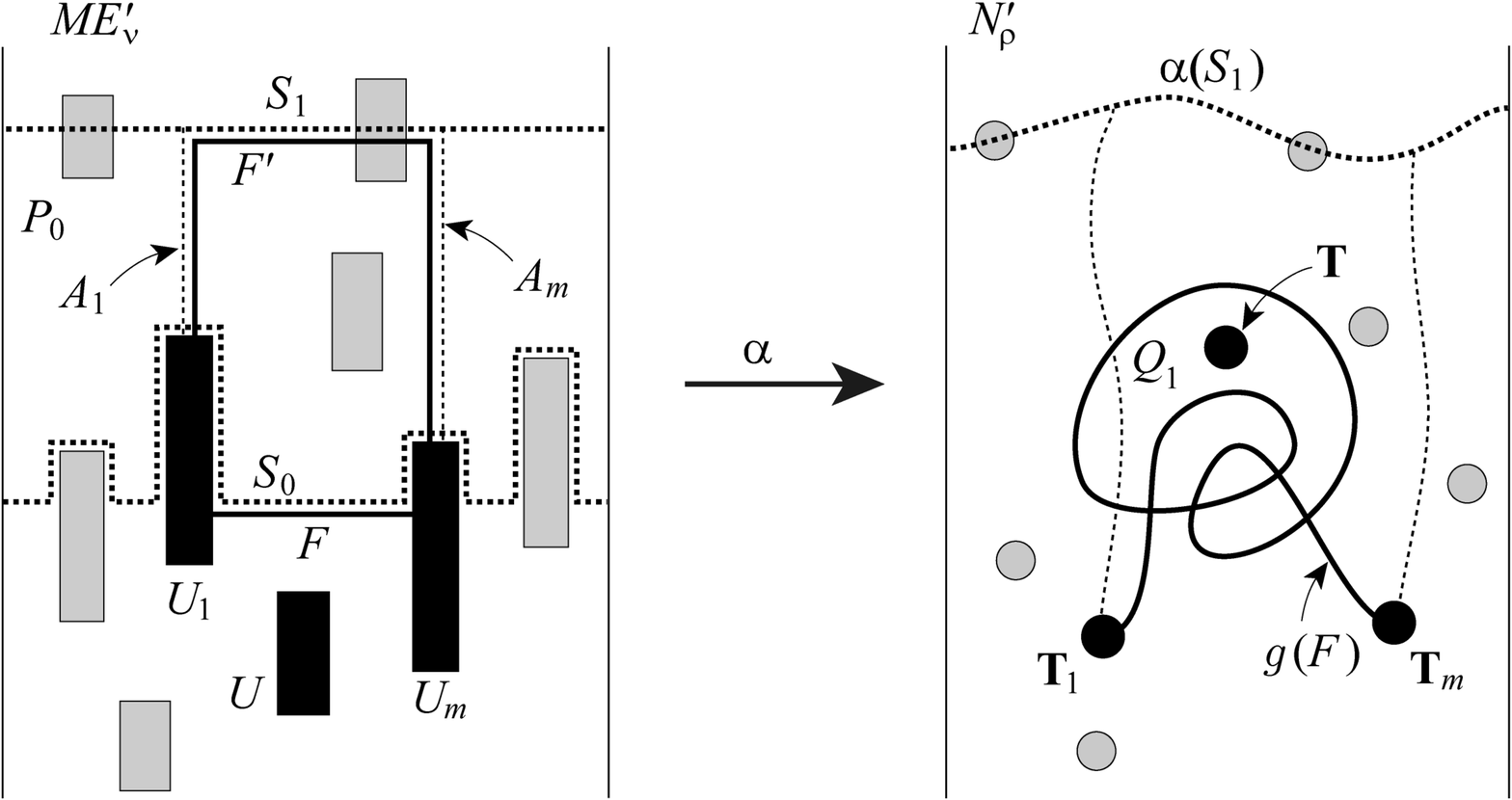}}
\caption{}
\label{f_homeo}
\end{figure}
If necessary deforming $\alpha$ by a proper homotopy again, we may assume that  
that the restriction $\alpha|_{A_1\cup\cdots\cup A_m}$ is also an embedding.
It follows from the fact that any two components of $\ol\bt\cup \bt_1^m$ are not parallel in 
$M\!E_\nu'$ and hence $\alpha|_{A_i}$ can not wind around any component of $\ol\bt\cup \bt_1^m$ homotopically essentially.
Thus $F$ is properly isotopic to a surface $F'$ in $M\!E_\nu'\setminus (\ol U\cup U_1^m)$ with $F'\subset S_1\cup  A_1\cup\cdots\cup A_m$ such that $\alpha|_{F'}$ is an embedding.
This shows that $g|_F$ is properly homotopic to an embedding in $N_\rho\setminus (\ol\bt\cup\bt_1^m)$.
Since $Q_2,\dots,Q_2$ are the components of $N_\rho'\setminus (g(F)\cup \bt_1^m\cup \ol\bt$) 
whose closures in $N_\rho' \setminus \ol\bt$ are compact, again 
by \cite{fhs} $g|_F$ is properly homotopic to an embedding in $R\cup (Q_2\cup\cdots\cup Q_m)$.
Repeating the same argument repeatedly, one can show that $g|_F$ is properly homotopic to an embedding $h$ 
in $R\cup Q_{u_1}\cup\cdots\cup Q_{u_a}\subset N_\rho[k]$, where $\{Q_{u_1},\dots,Q_{u_a}\}$ is the subset of 
$\{Q_1,\dots,Q_n\}$ with $Q_{u_j}\cap \bbt[k]=\eset$.

The uniform bi-Lipschitz property for a suitable embedding $h$ is derived easily from 
geometric limit arguments together with the uniform boundedness of the geometry on $R\cup Q_{u_1}\cup\cdots\cup Q_{u_a}$.
\end{proof}

A \emph{horizontal section} of $M\!E_\nu[k]$ is the union of horizontal surfaces of $M\!E_\nu[k]$ in the same level $S\times \{a\}$ for some $a\in\rr$. 
For any horizontal section $\Sigma$ of $M\!E_\nu[k]$, let $U_\Sg$ be the union of the components $U$ of $\cu[k]\setminus\wh\cu$ 
with $\part U\cap \Sg\neq \eset$.
Then, $\Sg$ separates $M\!E_\nu'\setminus U_\Sg$ into the $(+)$ and $(-)$-end components 
$P_+,P_-$.
By Proposition \ref{homeo}, $g:M\!E_\nu[k]\to N_\rho[k]$ is properly homotopic to a map $\beta$ such that 
$\beta|_\Sigma$ is an embedding.
The map $\beta$ is extended to a proper degree-one map $\wh \beta:M\!E_\nu'\to N_\rho'$.
The embedded surface $\wh\beta(\Sg)=\beta(\Sg)$ also separates $N_\rho'\setminus \bt_\Sg$ to the $(+)$ and $(-)$-end 
components $Q_+,Q_-$, where $\bt_\Sg=\wh \beta(U_\Sg)=\wh g(U_\Sg)$.
Since $\wh \beta$ defines a bijection between $\cu[k]$ and $\bbt[k]$, 
if a component $U$ of $\cu[k]$ is in $P_-$, then $\wh\beta(P_+)\cap \bt=\eset$ for $\bt=\wh\beta(U)=\wh g(U)$.
Since $\wh \beta(P_+)\supset Q_+$, $\bt$ is contained in $Q_-$.
Similarly, for any component $U$ of $\cu[k]\cap P_+$, $\wh g(U)$ is contained in $Q_+$.
This means that the pair  $(\Sg,\beta(\Sg))$ preserves the orders of $\cu[k]$ and $\bbt[k]$.

\begin{cor}\label{c_homeo}
The map $g$ of {\rm (\ref{rest})} is properly homotopic to a homeomorphism $g_0$.
\end{cor}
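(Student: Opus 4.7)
The plan is to upgrade $g$ to a homeomorphism slab by slab, combining Proposition \ref{homeo} with Waldhausen's theorem applied to an exhaustion of $M\!E_\nu[k]$ by compact pieces. First I would pick a bi-infinite sequence of horizontal sections $\Sigma_i$ $(i\in\zz)$ in $M_\nu[k]$ lying at levels $S\times\{a_i\}$ with $a_i\to\pm\infty$, chosen so that together with the end regions $E_\nu$ the closed slabs they cut out exhaust $M\!E_\nu[k]$. By Proposition \ref{homeo}, for each $i$ the restriction $g|_{\Sigma_i}$ is properly homotopic to an embedding $h_i$ contained in a uniform neighborhood of $g(\Sigma_i)$. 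Taking the homotopies to be supported in disjoint collar neighborhoods of the $\Sigma_i$'s and using a standard diagonal argument, these can be patched into a single proper homotopy from $g$ to a map $g_1:M\!E_\nu[k]\to N_\rho[k]$ with $g_1|_{\Sigma_i}=h_i$ for every $i$. The order-preservation statement in the paragraph preceding this corollary then guarantees that the $h_i(\Sigma_i)$ are mutually disjoint and that the slabs cut off from $M\!E_\nu[k]$ by consecutive $\Sigma_i$'s correspond bijectively, via $g_1$, to the regions cut off from $N_\rho[k]$ by consecutive $h_i(\Sigma_i)$'s, with matching tube boundaries on either side.

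Next, for each $i$ let $R_i\subset M\!E_\nu[k]$ denote the closed slab bounded by $\Sigma_i$, $\Sigma_{i+1}$ and the relevant pieces of $\part\cu[k]$, and let $Q_i\subset N_\rho[k]$ denote the corresponding region. Each $R_i$ is a compact, irreducible $3$-manifold with incompressible boundary, hence Haken, and the restriction $g_1|_{R_i}:R_i\to Q_i$ is a proper degree-one map that is already a homeomorphism on the entire boundary of $R_i$: it is an embedding on $\Sigma_i\cup\Sigma_{i+1}$ by construction, and it is a homeomorphism on the pieces of $\part\cu[k]$ by Theorem \ref{lm}(i). Waldhausen's theorem for homotopy equivalences of Haken $3$-manifolds that restrict to homeomorphisms on the boundary then produces a homotopy rel $\part R_i$ from $g_1|_{R_i}$ to a homeomorphism $g_{0,i}:R_i\to Q_i$.

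Finally I would glue the $g_{0,i}$'s into a map $g_0:M\!E_\nu[k]\to N_\rho[k]$, which is automatically a homeomorphism because the pieces agree on the common surfaces $\Sigma_i$ and on the shared pieces of $\part\cu[k]$. The accompanying slab-wise homotopies combine to a proper homotopy from $g_1$, and hence from $g$, to $g_0$: properness is immediate since each individual homotopy is supported in the compact region $Q_i$ and the $R_i$'s exhaust $M\!E_\nu[k]$ properly. On the end regions $E_\nu$ no further work is needed, since $g|_{E_\nu}=f'$ is already a bi-Lipschitz homeomorphism by Theorem \ref{lm}(iii); choosing all $\Sigma_i$ to lie inside $M_\nu[k]$ ensures that the entire homotopy fixes $E_\nu$ pointwise. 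The main obstacle I anticipate is arranging the diagonal argument in the first step so that the resulting homotopy is both proper and compatible with the tube boundaries $\part\cu[k]$ across adjacent slabs; this should be controlled by the uniform bi-Lipschitz bounds in Proposition \ref{homeo} together with the fact that $\wh g$ already identifies $\cu[k]$ with $\bbt[k]$ bijectively from the outset.
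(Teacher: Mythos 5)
Your proposal follows essentially the same strategy as the paper's proof: use Proposition~\ref{homeo} to replace the restrictions of $g$ to a collection of horizontal surfaces by embeddings, then fill in between by standard Haken 3-manifold theory (Waldhausen). The paper chooses a maximal family $\ch_0$ of pairwise non-parallel horizontal surfaces rather than a bi-infinite sequence of horizontal sections, but this is a bookkeeping difference.

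There is, however, one step where your argument is thinner than it should be. You write that ``taking the homotopies to be supported in disjoint collar neighborhoods of the $\Sigma_i$'s'' plus the order-preservation statement makes the images $h_i(\Sigma_i)$ mutually disjoint. Disjointness of the \emph{supports} of the homotopies in the domain does not by itself give disjointness of the \emph{images} $h_i(\Sigma_i)$ in $N_\rho[k]$: Proposition~\ref{homeo} only places each $h_i(\Sigma_i)$ in the $1$-neighborhood of $g(\Sigma_i)$, and a priori nothing stops $h_i(\Sigma_i)$ and $h_{i+1}(\Sigma_{i+1})$ from intersecting if $g(\Sigma_i)$ and $g(\Sigma_{i+1})$ are close. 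The paper resolves this precisely by invoking \cite{fhs}: one replaces each image by the least-area representative in its proper homotopy class (with respect to a metric on $N_\rho[k]$ deformed to have locally convex boundary), and the Freedman--Hass--Scott theorem guarantees that least-area representatives of properly homotopic disjoint surfaces are themselves disjoint. You should either invoke FHS as the paper does, or explicitly choose your $\Sigma_i$'s sufficiently widely spaced (using Lemma~\ref{l_tau}) so that the $1$-neighborhoods of the $g(\Sigma_i)$ are already disjoint. With either fix in place, the remainder of your argument---that $g_1$ restricted to each compact slab $R_i$ is a degree-one map that is a homeomorphism on the boundary and so can be upgraded to a homeomorphism rel boundary by Waldhausen---matches the paper's ``standard arguments in 3-manifold topology'' step and is correct.
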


\begin{proof}
Let $\ch_0$ be a maximal set of horizontal surfaces in $M_\nu[k]$ such that any two elements of $\ch_0$ are 
not mutually parallel in $M_\nu [k]$.
From Proposition \ref{homeo} together with the order-preserving property of horizontal surfaces, we know that, for any $F_1,F_2\in \ch_0$, 
the restrictions $g|_{F_1}$ and $g|_{F_2}$ are properly homotopic to mutually disjoint embedded surfaces.
By \cite{fhs}, $g$ is properly homotopic to a map $g'$ such that $g'|_{\bigcup_{F\in \ch_0}F}$ is 
an embedding, where $g'(F)$ has the least area among all surfaces properly homotopic to $g(F)$ 
on a fixed Riemannian metric on $N_\rho[k]$ with respect to which $\part N_\rho[k]$ is locally convex.
By using standard arguments in 3-manifold topology (see for example \cite{wa, he}), one can prove 
that $g'$ is properly homotopic to a homeomorphism $g_0$ without moving $g'|_{\bigcup_{F\in \ch_0}F}$.
\end{proof}

In \cite[Proposition 3.1]{bow3}, this corollary is proved under more general settings.
We note that Corollary \ref{c_homeo} does not necessarily imply that $g_0$ is Lipschitz.
In fact, since we used the free boundary value problem of the minimal surface theory, we can not 
control the position of least area surfaces in $N_\rho [k]$.
For the proof of the bi-Lipschitz model theorem, we need to apply the fixed boundary value problem.

Let $F$ be any horizontal surface in $M_\nu[k]$.
Since  $F\cap \cu=F\cap (\cu\setminus \cu[k])$ and the geometries on all components of $\cu\setminus \cu[k]$ are uniformly bounded, one can show that any 
two horizontal surfaces in $M_\nu[k]$ with 
the same topological type are uniformly bi-Lipschitz up to marking.

\begin{remark}[Technical modifications on $g$]\label{mod_g}

Since the length of $g(l)$ is at most $K\ve_1$ for any boundary component $l$ 
of a horizontal surface in $M_\nu[k]$, we may assume by slightly modifying $g$ that  
the image $g(\part F)$ is a disjoint union of closed geodesics in $\part \bbt[k]$ for any 
horizontal surface $F$.

Let $U$ be a component of $\cu[k]\setminus \cu_{(\wh S\setminus S)}$ and $\mathbf{T}=\wh g(U)$.
If $\part U$ is a torus, then it consists of two horizontal annuli and two vertical annuli.
Otherwise, $\part U$ consists of one horizontal annulus and two vertical half-open annuli.
Let $L$ be the set of longitudes $l_i$ in $\part U$ corresponding to 
the boundary components of these horizontal annuli, $F(l_i)$ the horizontal surface in $M_\nu[k]$ with 
$\part F(l_i)\supset l_i$ and $A_j$ the horizontal annuli in $\part U$ with $\part A_j\subset L$.
Note that $L$ has either two or four components.
We say that $g|_L$ is \emph{well-ordered} if 
$g|_{\part U}:\part U\to \part\mathbf{T}$ is properly homotopic rel.\ $L$ to 
a homeomorphism.
Since the diameter of any horizontal surface $F$ in $M_\nu[k]$ is less than a uniform constant $\delta_0$, 
$\diam_{N_\rho[k]}(g(F))<K \delta_0$.
As in the proof of Proposition \ref{homeo},  there exists a proper homotopy for $g$ whose support 
consists of at most four components of uniformly bounded diameter and which moves $g$ to a map 
$\gamma$ such that $\gamma|_{\bigcup F(l_i)\,\cup\, \bigcup A_j}$ is an embedding into a small regular neighborhood of 
$g\bigl(\bigcup F(l_i)\cup \bigcup A_j\bigr)$ in $N_\rho[k]$, see Fig.\ \ref{f_well}.
\begin{figure}[hbtp]
\centering
\scalebox{0.125}{\includegraphics[clip]{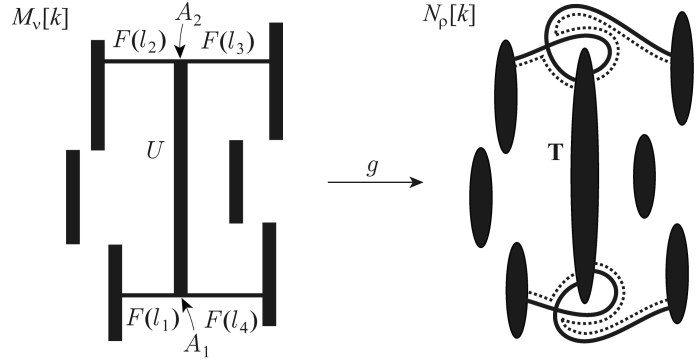}}
\caption{The dotted curves in the right side represent $\gamma\bigl(\bigcup F(l_i)\bigr)$.}
\label{f_well}
\end{figure}
Thus one can modify the Lipschitz map $g$ in a small neighborhood $\cn(\part U)$ of $\part U$ in $M_\nu[k]$ 
by a uniformly bounded-transferring homotopy so that $g^{\mathrm{new}}|_{\part U}=\gamma|_{\part U}$ 
and hence $g^{\mathrm{new}}|_L$ is well-ordered.
Here the homotopy being \emph{uniformly bounded-transferring} means that 
$\sup_{x\in M_\nu[k]}\{\dist_{N_\rho[k]}(g(x), \gamma(x))\}$ is less than a uniform constant.
The reason why we did not define $g^{\mathrm{new}}=\gamma$ totally in $M_\nu[k]$ is 
to do such a modification of $g$ on each component of $\part\cu[k]$ 
independently and simultaneously. 
The Lipschitz constant of $g^{\mathrm{new}}$ may be greater than the original constant, but still denoted by $K$.

Since $N_\rho[k]\subset N_{[\ve_2,\infty)}$ by Theorem \ref{lm}\,(i), 
modifying $g$ again if necessarily, one can suppose that $\dist_{\part\mathbf{T}}(\part_-A,\part_+A)\geq \ve_2/2$ for the closure $A$ of any component of $\part\mathbf{T}\setminus g(L)$.
\end{remark}

\subsection{Position of the images of horizontal surfaces}\label{intersection_number}

Let $\bsmcQ$ be the brick decomposition of $(M_\nu,\cu[k])$.
Note that $\bsmcQ$ may contain a brick $Q$ the form of which is either $F\times (-\infty,a]$ or 
$F\times [b,\infty)$ or $S\times \rr$.
For example, when $Q=F\times [b,\infty)$, $Q$ contains components of $\cu\setminus \cu[k]$ 
exiting the end of $Q$.
We say that a component of $\part_{\mathrm{hz}}Q$ contained in $S\times \rr$ (resp.\ in $S\times \{-\infty,\infty\}$) is a \emph{real front} (resp.\ an \emph{ideal front}) of $Q$.
Let $\sg(F)$ be the metric on a horizontal surface $F$ in $Q\in \bsmcQ$ induced from that on $M_\nu[k]$ and set $\dist(\sg(F),\sg(F'))=\dist_{\mathrm{Teich}(Q^S)}(\sg(F),\sg(F'))$.

Let $F,F'$ be horizontal surfaces in $Q\in \bsmcQ$.
Then $\dist_{M_\nu[k]}(F,F')$ is the length of a shortest arc $\alpha$ in $M_\nu[k]$ connecting 
$F$ with $F'$.
However, such an arc $\alpha$ may not be homotopic into $Q$ rel.\ $\part \alpha$.
So we consider the covering $p:\wt M_\nu[k]\to M_\nu[k]$ associated to $\fd(Q)\subset \fd(M_\nu[k])$ and set $\dist_{M_\nu[k];Q}(F,F')=\dist_{\wt M_\nu[k]}(\wt F,\wt F')$, 
where $\wt F$, $\wt F'$ are the lifts of $F$, $F'$ to $\wt M_\nu[k]$.
One can define $\dist_{N_\rho[k];Q}(g(F),g(F'))$ and $\diam_{N_\rho[k];Q}(g(B))$ for any brick $B$ in $Q$  
similarly by
using the covering $q:\wt N_\rho[k]\to N_\rho[k]$ associated to $g_*(\fd(Q))\subset \fd(N_\rho[k])$.
Note that, since $B$ is embedded in $Q$, $B$ and its lift to $\wt M_\nu[k]$ have the same diameter.

\begin{lemma}\label{l_tau}
For any $d>0$, there exists a uniform constant $\iota(d)$ satisfying the following conditions.
Let $F_j$ $(j=0,1)$ be horizontal surfaces in $Q\in \bsmcQ$ which contains simple non-contractible loops 
$w_j$ of length not greater than $\ve_1$.
If the geometric intersection number $i(w_0^S,w_1^S)\geq \iota(d)$, then $\dist_{N_\rho[k];Q}(g(F_0), g(F_1))\geq d$.
\end{lemma}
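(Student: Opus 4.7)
The plan is to reduce the three-dimensional distance hypothesis to a two-dimensional length comparison in Teichm\"uller space via the geometric-limit argument of Example \ref{e_geom1}, and then conclude with the collar lemma. First I pass to the cover $\wt N := \wt N_\rho[k]$ of $N_\rho[k]$ associated to $g_*(\fd(Q))$; this cover has $\pi_1 \cong \fd(Q^S)$ and, by Theorem \ref{lm}\,(i), injectivity radius at least $\ve_2$. Since $\fd(Q)$ is realised as the deck group, $Q$ lifts isomorphically to a copy $\wt Q \subset \wt M_\nu[k]$ with product structure $\wt Q = Q^S \times [a,b]$, so $\wt F_j \cong F_j$ and $\wt g|_{\wt F_j}$ is simply $g|_{F_j}$ post-composed with a preferred lift. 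The hypothesis $\dist_{N_\rho[k];Q}(g(F_0),g(F_1)) < d$ supplies a geodesic arc $\gamma \subset \wt N$ of length less than $d$ joining $p = \wt g(p') \in \wt g(\wt F_0)$ to $q = \wt g(q') \in \wt g(\wt F_1)$. Using this arc I build a marking-preserving proper homotopy $H : Q^S \times [0,1] \to \wt N$ between $\wt g|_{\wt F_0}$ and $\wt g|_{\wt F_1}$ with at least one track of uniformly bounded length: start with the natural product homotopy $H_0 := \wt g \circ \phi$ and modify it in a small tubular neighbourhood of the single fibre $\{(p')^S\} \times [0,1]$ so that the new track at $(p')^S$ becomes the concatenation of $\gamma$ with a short arc (of length at most $K\delta_0$, where $\delta_0$ is a uniform diameter bound for horizontal surfaces) inside $\wt g(\wt F_1)$ joining $q$ to $\wt g((p')^S, b)$. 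This track has length at most $d + K\delta_0 =: d_1$, and the support of the modification is chosen disjoint from $\part Q^S \times [0,1]$, preserving both continuity and properness.

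By the uniform bi-Lipschitz equivalence between horizontal-surface metrics and hyperbolic structures in $\mathrm{Teich}_{\ve_1}(Q^S)$ (noted in the paragraph preceding Remark \ref{mod_g}), the maps $\wt g|_{\wt F_j}$ factor as uniformly $K'$-Lipschitz maps $f_j : Q^S(\sigma_j) \to \wt N_{[\ve_2,\infty)}$ with $\sigma_j \in \mathrm{Teich}_{\ve_1}(Q^S)$. The argument of Example \ref{e_geom1}, which needs only $\ve_2$-thickness of the target and the existence of a homotopy arc of bounded length, applies verbatim in the cover and yields a uniform $d_0 = d_0(d)$ with $\dist_{\mathrm{Teich}_{\ve_1}(Q^S)}(\sigma_0,\sigma_1) < d_0$. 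By Wolpert's length comparison, $l_{\sigma_1}(w_0) \leq e^{2d_0} l_{\sigma_0}(w_0) \leq e^{2d_0} K' \ve_1$; hence both $w_0$ and $w_1$ are simple closed geodesics of length at most a uniform $L$ in the hyperbolic surface $Q^S(\sigma_1)$. The collar lemma then gives $i(w_0^S, w_1^S) \leq L / \bigl(2\,\mathrm{arcsinh}(1/\sinh(L/2))\bigr) =: \iota_0$, and setting $\iota(d) := \iota_0 + 1$ completes the proof.

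The delicate step is the fibre-local surgery in the construction of $H$: although $H_0$ itself may have tracks as long as $K(b-a)$, which is unbounded in general, one exploits the freedom to modify a homotopy in a small tubular neighbourhood of a single fibre without affecting its endpoints, so that the new track realises an optimal arc in $\wt N$ of uniformly bounded length. Once this is in place, Example \ref{e_geom1} handles the two-dimensional reduction and the collar lemma finishes.
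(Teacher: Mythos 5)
Your proposal is correct and follows essentially the same line as the paper's argument: in both cases the hypothesis $\dist_{N_\rho[k];Q}(g(F_0),g(F_1))<d$ is converted, via the geometric-limit argument of Example \ref{e_geom1} in the cover associated to $g_*(\fd(Q))$, into a bound on $\dist_{\mathrm{Teich}}(\sigma(F_0),\sigma(F_1))$, and then a collar-type estimate bounds $i(w_0^S,w_1^S)$. The difference is only in bookkeeping at the two ends. You make explicit the construction of the bounded-length homotopy arc (the concatenation of the minimizing arc $\gamma$ with a short arc inside $g(F_1)$, followed by the fibre-local surgery of the product homotopy), a step the paper dispatches in one line as ``a geometric limit argument as in Example \ref{e_geom1}''; this detail is worth spelling out, and you should note that the local surgery is justified because $\pi_2$ of the covering manifold vanishes, so the track can be re-prescribed inside a tubular neighbourhood of one fibre while keeping the homotopy's restriction to $Q^S\times\{0,1\}$ fixed. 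For the final step you transport $w_0$ to $Q^S(\sigma_1)$ by Wolpert's inequality and invoke the collar lemma for two short geodesics in the same hyperbolic surface; the paper instead works entirely in $F_0(\sigma_0)$, taking the geodesic representative $w_1'$ of $w_1$ and observing that each essential arc of $w_1'$ relative to $w_0$ has length at least the collar half-width $\lambda$ of the $\ve_1$-short curve $w_0$, so $\lambda\, i(w_0^S,w_1^S)\le\mathrm{length}(w_1')$. These two finishes are equivalent in content (both are the collar lemma), so the choice is cosmetic; neither buys anything over the other.
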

\begin{proof}
Form the construction of $M_\nu[k]$, we know that horizontal surfaces in $Q$ have uniformly bounded 
geometry up to marking.
Since moreover $N_\rho[k]\subset N_{\rho[\ve_2,\infty)}$, a geometric limit argument as in Example 
\ref{e_geom1} 
shows the existence of a uniform constant $\tau(d)>0$ such that, if $d(\sg(F_0),\sg(F_1))\geq \tau(d)$, 
then $\dist_{N_\rho[k];Q}(g(F_0), g(F_1))\geq d$.

Suppose here that $d(\sg(F_0),\sg(F_1))< \tau(d)$.
Then the length of a shortest loop $w_1'$ in $F_0$ freely homotopic to $w_1$ in $Q$ is bounded from above by a uniform 
constant $l(\tau(d))$.
Let $\alpha$ be any arc $\alpha$ in $F_0$ with $\part \alpha\subset w_0$ such that $\alpha$ is not homotopic in $F$ rel.\ $\part \alpha$ to an arc in $w_0$.
It is not hard to see that the length of $\alpha$ is not less than a uniform constant $\lambda>0$. 
Since $\lambda i(w_0^S,w_1^S)\leq \mathrm{length}_{F_0}(w_1')<l(\tau(d))$,  $\iota(d):=\lambda^{-1}l(\tau(d))$ is our desired uniform constant.
\end{proof}

For any brick $Q$ of $\bsmcQ$, we will define a new brick decomposition $\bsmcD_Q$ on $Q$.
From the definition of meridian coefficients in Subsection \ref{ss_meridian}, we know that, for  
any component $U$ of $\cu\setminus \cu[k]$, the diameter of $\part U$ is less than a uniform constant $\delta_1$.
We may assume that $\delta_1>1$.
Let $B$ be any brick of $Q$ such that at least one component $A$ of  $\partial_{\mathrm{vt}}B$ 
is contained in $\part U$ for some component $U$ of $\cu\setminus \cu[k]$.
Since any point of $B$ is connected with a point of $A$ along a path in a horizontal surface in $B$, 
the diameter of $B$ is at most $2\delta_0+\delta_1$.
By Lemma \ref{l_long_bricks}, either the diameter of $Q$ is less than $n_0(2\delta_0+\delta_1)$ or there exists a brick $C$ of $\wh\cb_a$ for some $a$ such that $Q^S$ is a compact core of $C^S$ and 
the compliment of $B_Q=C\cap Q$ in $Q$ consists of at most two components the closures $B_\alpha$ of which are bricks of diameter less than $n_0(2\delta_0+\delta_1)$. 
Hence $\diam_{N_\rho[k];Q}(g(B_\alpha))$ is less than the uniform constant $Kn_0(2\delta_0+\delta_1)=:\gamma_0$.
These $B_\alpha$ are called the \emph{complementary brick} of $B_Q$ in $Q$.
Since $\delta_1>1$, $\gamma_0>K(\delta_0+1)$.

According to \cite[Lemma 2.1]{mi1}, there exists a uniform constant $d_0=d_0(2\gamma_0)$ such that $d_{\cc(F)}(u,v)\geq d_0$ implies 
$i(u,v)\geq \iota(2\gamma_0)$ for any $u,v\in \cc_0(F)$, where $\iota(\cdot)$ is the uniform 
constant given in Lemma \ref{l_tau}. 
Let $g_C$ be the tight geodesic in $C^S$ defined in Subsection \ref{ss_hierarchies}.
Consider the subsequence $g_{B_Q}=\{v_i\}_{i\in I}$ of the tight geodesic $g_C$ consisting of entries 
$v_i$ with $A(v_i)\cap \Int B_Q\neq \eset$, where $I$ is an interval in $\mathbf{Z}$.
In the case of $\xi(Q)=1$, one can adjust $B_Q$ in $Q$ so that $\ca_{g_{B_Q}}\cap \part_\pm B_Q\neq 
\eset$ if $\part_\pm B_Q\neq \eset$.

Suppose that the cardinality $|I|$ of $I$ is greater than $2d_0$.
Then there exists a maximal subsequence $\{i_j\}_{j\in J}$ of $I=\{i\}$ with $d_0\leq i_{j+1}-j_j<2d_0$ and  
containing $\inf I$, $\sup I$ if they are bounded.
Consider horizontal surfaces $F_j$ $(j\in J)$ in $Q$ such that $F_j\subset B_Q$ and $F_j\cap A(v_{i_j})\neq \eset$ if $i_j\not\in \{\inf I,\sup I\}$ and $F_j=\part_-Q$ if $i_j=\inf I$, $F_j=\part_+Q$ if 
$i_j=\sup I$.
Let $\bsmcD_Q$ be the set of bricks $D_j$ in $Q$ with $\part_{\mathrm{hz}}D_j=F_j\cup F_{j+1}$.
In the case that $|I|\leq 2d_0$, we suppose that $\bsmcD_Q$ is the single point set $\{Q\}$.
We denote the union $\bigcup_{Q\in \bsmcQ}\bsmcD_Q$ by $\bsmcD$.

For any element of $D$ in $\bsmcD_Q$ with $\part_{\mathrm{hz}}D\cap \part_{\mathrm{hz}}Q=\eset$, 
if $\xi(D)>1$, then
$\part_- D$ and $\part_+D$ are connected by the union $R$ of at most $2d_0$ bricks in $D$ of diameter not greater than $2\delta_0+\delta_1$.
Since each horizontal surface $F'$ of $D$ meets $R$ non-trivially, the diameter of $D$ is less than 
$2d_0(2\delta_0+\delta_1)+2\delta_0=:\delta_2'$.
If $\xi(D)=1$, then $D$ contains at most $2d_0$ buffer bricks each of which is isometric to 
either $B_{0,4}$ or $B_{1,1}$.
Then one can retake the uniform constant $\delta_2'$ if necessary so that $\diam(D)<\delta_2'$ 
even if $\xi(D)=1$. 
In the case that $\part_{\mathrm{hz}}D\cap \part_{\mathrm{hz}}Q\neq \eset$, $D$ contains at most 
two complementary bricks $B_\alpha$.
Since $\diam(B_\alpha)<n_0(2\delta_0+\delta_1)$, the diameter of $D$ is less than $\delta_2'+2n_0(2\delta_0+\delta_1)=:\delta_2$.
It follows that $\delta_2$ is a uniform constant with 
\begin{equation}\label{eta_0}
\diam(D)< \delta_2\quad\mbox{for any $D\in \bsmcD$.}
\end{equation}
Similarly, each component of $\part_{\mathrm{vt}}D$ is an annulus of diameter less than $\delta_2$.

We say that a sequence of horizontal surfaces $\{Y_l\}_{l\in L}$ in $Q$ indexed by 
an interval in $\zz$ \emph{ranges in order} in $M_\nu[k]$ if $\wt Y_{l-1}$ and $\wt Y_{l+1}$ are contained 
in distinct components of $\wt M_\nu[k]\setminus \wt Y_l$ for any $\{l-1,l,l+1\}
\subset L$, where $\wt Y_u$ is the lift of $Y_u$ to the covering $p:\wt M_\nu[k]\to M_\nu[k]$ associated to 
$\fd(Q)\subset \fd(M_\nu[k])$.
The definition of $\{g(Y_l)\}_{l\in L}$ \emph{ranging in order} in $N_\rho[k]$ is defined 
similarly when $g(Y_l)\cap g(Y_{l+1})=\eset$ for any $\{l,l+1\}\subset I$.

\begin{lemma}\label{l_Y}
Let $Q$ be a element of $\bsmcQ$ such that $\bsmcD_Q$ has at least two elements.
Then, for the sequence $\{F_j\}_{j\in J}$ of horizontal surfaces in $Q$ as above, 
$\{g(F_j)\}$ ranges in order in $N_\rho[k]$ and, for any $j\in J$ and $n\in \nn$ with $F_{j+n}$ well defined,
\begin{equation}\label{eqn_gamma}
\dist_{N_\rho[k];Q}(g(F_j),g(F_{j+n}))\geq n\gamma_0.
\end{equation}
\end{lemma}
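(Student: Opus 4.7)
The plan is to prove the two assertions in three steps: bound the consecutive-distance $\dist_{N_\rho[k];Q}(g(F_j),g(F_{j+1}))$ from below, deduce the ranging-in-order property from this bound, and then telescope to obtain (\ref{eqn_gamma}). First, each $F_j$ contains the geodesic core $v_{i_j}^S$ of a component of $\cu\setminus\cu[k]$ as a loop of length $\ve_1$. By construction of the index subsequence $\{i_j\}$, consecutive vertices satisfy $d_{\cc(Q^S)}(v_{i_j}^S,v_{i_{j+1}}^S)=i_{j+1}-i_j\geq d_0=d_0(2\gamma_0)$ along the tight geodesic $g_{B_Q}$. The definition of $d_0$ via \cite[Lemma 2.1]{mi1} forces $i(v_{i_j}^S,v_{i_{j+1}}^S)\geq\iota(2\gamma_0)$, so Lemma \ref{l_tau} yields
\begin{equation*}
\dist_{N_\rho[k];Q}(g(F_j),g(F_{j+1}))\geq 2\gamma_0.
\end{equation*}

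To establish the ranging-in-order property, note that the $F_j$ are horizontal sections of $Q\cong Q^S\times J$ arranged by the $\rr$-coordinate, so their lifts $\wt F_j$ to the cover $\wt M_\nu[k]\to M_\nu[k]$ associated to $\fd(Q)$ are mutually disjoint and consecutive $\wt F_{l\pm 1}$ lie on opposite sides of $\wt F_l$ inside the lift $\wt Q$. To transfer this to $N_\rho[k]$, I would apply Proposition \ref{homeo}: each $g|_{F_j}$ is properly homotopic to an embedding $h_j:F_j\to N_\rho[k]$ within the closed $1$-neighborhood of $g(F_j)$. Since the cover distance $2\gamma_0$ exceeds $2$ (recall $\gamma_0>K(\delta_0+1)>1$), the $1$-neighborhoods of $\wt g(\wt F_j)$ in $\wt N_\rho[k]$ (associated to $g_*(\fd(Q))$) are pairwise disjoint, so the lifted embeddings $\wt h_j(\wt F_j)$ form a disjoint family of properly embedded surfaces each properly homotopic to $\wt g(\wt F_j)$. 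A standard 3-manifold argument, in the spirit of those used in the proofs of Proposition \ref{homeo} and Corollary \ref{c_homeo}, then shows that these embedded surfaces separate $\wt N_\rho[k]$ in the same order as the $\wt F_j$ separate $\wt Q$, which verifies that $\{g(F_j)\}$ ranges in order in $N_\rho[k]$.

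Finally, the ranging-in-order property implies that any path in $\wt N_\rho[k]$ joining $\wt g(\wt F_j)$ to $\wt g(\wt F_{j+n})$ must traverse each intermediate $\wt g(\wt F_{j+l})$ for $1\leq l\leq n-1$, so
\begin{equation*}
\dist_{N_\rho[k];Q}(g(F_j),g(F_{j+n}))\geq\sum_{l=0}^{n-1}\dist_{N_\rho[k];Q}(g(F_{j+l}),g(F_{j+l+1}))\geq 2n\gamma_0\geq n\gamma_0.
\end{equation*}
The main obstacle is Step 2: since $g$ is only Lipschitz and not an embedding, the separation property has to be transferred via the embedded approximations from Proposition \ref{homeo}, and one must verify that these approximations can be chosen simultaneously disjoint and order-preserving in the cover. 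Once ordering is in hand, Steps 1 and 3 are essentially routine.
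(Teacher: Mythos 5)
Your Step 1 has a small but real slip: you claim every $F_j$ contains the loop $v_{i_j}^S$ and get the bound $\geq 2\gamma_0$ between consecutive $g(F_j)$. This fails for the endpoint surfaces $F_j = \part_\pm Q$, which in general are separated from $B_Q$ by a complementary brick $B_\alpha$ of diameter as large as $\gamma_0$ and need not meet $A(v_{i_j})$ at all. The paper handles this by passing to surfaces $F_j' = \part_\pm B_Q$ inside $B_Q$, gets $\geq 2\gamma_0$ between the $g(F_j')$, and then absorbs the $\diam_{N_\rho[k];Q}(g(B_\alpha)) \leq \gamma_0$ correction to land on $\geq \gamma_0$ between the actual $g(F_j)$. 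Your $2\gamma_0$ bound is therefore wrong at the boundary (though not fatally, since the lemma only asks for $n\gamma_0$).

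The more serious gap is in Step 2, and you yourself flag it as the "main obstacle" but do not close it. You deduce from the $2\gamma_0$ separation that the $1$-neighborhoods of $\wt g(\wt F_j)$ are pairwise disjoint, so the embedded approximations $\wt h_j(\wt F_j)$ from Proposition \ref{homeo} are disjoint, and then assert that "a standard 3-manifold argument" forces them into the same order as the $\wt F_j$ in $\wt Q$. That last step is not standard and does not follow from disjointness plus proper homotopy alone: a disjoint family of mutually parallel incompressible surfaces in a product-like manifold is linearly ordered, but a priori the Lipschitz map $g$ could fold so that the order in $\wt N_\rho[k]$ permutes. The paper pins down the order with a concrete quantitative contradiction: if the triple $\{g(F_j'), g(F_{j+1}'), g(F_{j+2}')\}$ were out of order, then by continuity of $g$ on the region of $B_Q$ between $F_j'$ and $F_{j+2}'$ there would be an intermediate horizontal surface $G_a$ (meeting $A(v_a)$) whose image lands within distance $K\delta_0 + K < \gamma_0$ of some $g(F_{j+b}')$, while $d_{\cc(Q^S)}(v_a, v_{i_{j+b}}) \geq d_0$ together with Lemma \ref{l_tau} forces $\dist_{N_\rho[k];Q}(g(G_a), g(F_{j+b}')) \geq 2\gamma_0$. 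Your proposal replaces this with a topological claim about disjoint embeddings that is not obviously true and for which you give no argument; as written it is a genuine gap.
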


\begin{proof}
Set $F_j'=\part_\pm B_Q$ if $F_j=\part_\pm Q$ and $F_j'=F_j$ otherwise.
Both $F_j'\cap A(v_{i_j})$ and $F_{j+1}'\cap A(v_{j_{j+1}})$ contain simple non-contractible loops $w_1$, $w_2$ of length $\ve_1$, respectively.
Since $d_{\cc(Q^S)}(w_1^S,w_2^S)=i_1\geq d_0$, $i(w_1^S,w_2^S)\geq \iota(2\gamma_0)$.
By Lemma \ref{l_tau}, $\dist_{N_\rho[k];Q}(g(F_j'), g(F_{j+1}'))\geq 2\gamma_0$.
For the proof, we need to consider the case that $F_u'\neq F_u$ or $F_{u+1}'\neq F_{u+1}$ for some 
$u\in J$, say $F_u\neq F_u'$.
Then $F_{u+1}'=F_{u+1}$ since $\bsmcD_Q$ has at least two elements.
There exists a complementary brick $B_\alpha$ with $\part_{\mathrm{hz}}B_\alpha=
F_u\cup F_u'$.
Since $\diam_{N_\rho[k];Q}(g(B_\alpha))\leq \gamma_0$, $\dist_{N_\rho[k];Q}(g(F_u), g(F_{u+1}))\geq \gamma_0$.
It follows that $\dist_{N_\rho[k];Q}(g(F_j), g(F_{j+1}))\geq \gamma_0$ for any $j\in J$.

If $\{g(F_j'),g(F_{j+1}'),g(F_{j+2}')\}$ did not range in order in $N_\rho[k]$, then for some integer $a$ 
with $i_j\leq a\leq i_{j+2}$, there would exist 
horizontal surfaces $G_a$, $G_a'$ in $B_Q$ with $G_a\cap A(v_a)\neq \eset$, 
$\dist_{M_\nu[k];Q}(G_a,G_a')\leq 1$ and $g(G_a')\cap g(F_{j+b}')\neq \eset$, where 
$b=2$ if $i_j\leq a\leq i_{j+1}$ and $b=0$ if $i_{j+1}\leq a\leq i_{j+2}$.
Here $G_a'$ is taken to be equal to $G_a$ unless $\xi(Q)=1$ and $G_a'$ is in a buffer brick.
Since $d_{\cc(Q^S)}(v_a,v_{i_{j+b}})\geq d_0$, Lemma 
\ref{l_tau} would imply $\dist_{N_\rho[k];Q}(g(G_a), g(F_{j+b}'))\geq 2\gamma_0$.
On the other hand, since $g(G_a')\cap g(F_{j+b}')\neq \eset$,
\begin{align*}
\dist_{N_\rho[k];Q}(g(F_{j+b}'),g(G_a))&\leq \diam_{N_\rho[k];Q}(g(G_a'))+\dist_{N_\rho[k];Q}(g(G_a'), g(G_a))\\
&\leq K\delta_0+K<\gamma_0.
\end{align*}
This contradiction shows that $\{g(F_j'),g(F_{j+1}'),g(F_{j+2}')\}$ ranges in order in $N_\rho[k]$.
Since $\dist_{N_\rho[k];Q}(g(F_v'), g(F_{v+1}'))\geq 2\gamma_0$ for $v=j,j+1$, 
$\dist_{N_\rho[k];Q}(g(F_w), g(F_w'))\leq \gamma_0$ for $w=j,j+2$ and $F_{j+1}'=F_{j+1}$, it follows that 
$\{g(F_j),g(F_{j+1}),g(F_{j+2})\}$ also ranges in order and hence $\{g(F_j)\}$ does.
Then the inequality (\ref{l_Y}) is derived immediately from $\dist_{N_\rho[k];Q}(g(F_j), g(F_{j+1}))\geq \gamma_0$ for any $j$.
\end{proof}

For any component $U$ of $\cu[k]$, $\part U$ has the 
foliation  $\cf_U$  consisting of geodesic longitudes of length $\ve_1$.
By Remark \ref{mod_g}, the boundary $\part \bt$ of $\bt=\wh g(U)$ can have the foliation $\cg_U$ 
consisting of geodesic leaves 
such that $g(l)\in \cg_U$ for any leaf $l$ of $\cf_U$.
Thus $g|_{\part U}$ defines a $K$-Lipschitz map $\theta_U:\cf_U\to \cg_U$, where $\cf_U$ and 
$\cg_U$ have the metrics defined by the leaf distance in the Euclidean cylinders $\part U$ and $\part \bt$ respectively.
Any contractible component of $\cf_U$ or $\cg_U$ can be identified with an interval in $\rr$ as a metric space.
For any annulus $A$ in $\part U$ with geodesic boundary, the subfoliation of $\cf_U$ with the 
support $A$ is denoted by $\cf_A$.
When $A$ is vertical, for any $x\in \cf_A$, the horizontal surface in $M_\nu[k]$ which has a boundary component corresponding to $x$ is denoted by $F(x)$.
If $F(x)$ is a component of $\part_{\mathrm{hz}}D$ for some $D\in \bsmcD$, then $x$ is called a 
\emph{sectional point}.

\section{Geometric proof of the bi-Lipschitz model theorem}\label{bLip}

In this section, we will present a hyperbolic geometric proof of the bi-Lipschitz model theorem 
given in \cite{bcm}.

\begin{theorem}[Bi-Lipschitz Model Theorem]\label{blm}
There exist uniform constants $K'\geq 1,k>0$ such that there is a marking-preserving $K'$-bi-Lipschitz homeomorphism 
$\varphi:M\!E_\nu[k] \to N_\rho[k]$ which can be extended to a conformal homeomorphism from 
$\part_\infty M\!E_\nu$ to $\part_\infty N$.
\end{theorem}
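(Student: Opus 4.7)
The plan is to upgrade the Lipschitz homeomorphism $g_0:M\!E_\nu[k]\to N_\rho[k]$ (obtained from Corollary \ref{c_homeo} together with the modifications of Remark \ref{mod_g}) to a bi-Lipschitz one by working piece-by-piece on the refined brick decomposition $\bsmcD$ and then patching across common faces, and finally extending across the Margulis tubes $\cu[k]$. The upper Lipschitz bound is already furnished by Theorem \ref{lm}; the entire task is to produce a uniform lower Lipschitz bound, which amounts to converting the ``bounded geometry plus ranging in order'' information assembled in Section \ref{intersection_number} into a bi-Lipschitz statement.

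First, for each $D\in\bsmcD$ I would show, by a geometric limit compactness argument in the spirit of Examples \ref{e_geom1} and \ref{e_geom2}, that $g|_D$ is properly homotopic rel $\part D$ to a uniformly bi-Lipschitz embedding $\varphi_D$ onto a submanifold $\wt D\subset N_\rho[k]$, with constant depending only on $S$. The ingredients are: (a) $\diam(D)<\delta_2$ by (\ref{eta_0}), so the bricks fall into finitely many homeomorphism types of uniformly bounded geometry; (b) Lemma \ref{l_Y}, which forces $g(\part_- D)$ and $g(\part_+ D)$ to be separated by at least $\gamma_0$ in the $Q$-cover whenever $D$ is interior to some $Q\in\bsmcQ$, precluding collapse; (c) Proposition \ref{homeo}, giving embedded representatives on horizontal faces; and (d) Remark \ref{mod_g}, which gives well-ordered behaviour of $g|_{\part_{\mathrm{vt}}D}$ along any tube boundary it meets. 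A putative counterexample sequence $(\nu_n,\rho_n)$, re-based inside the offending brick, passes to a geometric limit in which the limit map restricts to a proper degree-one homotopy equivalence between two compact 3-manifolds of the same topological type and uniformly bounded geometry; standard 3-manifold topology together with the least-area regularisation of Remark \ref{r_geom} then yields a bi-Lipschitz homeomorphism properly homotopic to it, contradicting the choice of sequence.

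Second, I would assemble a global homeomorphism $\varphi_0:M_\nu[k]\to N_\rho[k]$ from the local pieces $\varphi_D$. Adjacent bricks of $\bsmcD$ meet either along a common horizontal surface, in which case compatibility is supplied by the order-preservation observation following Proposition \ref{homeo} after prescribing a single embedded model for each horizontal face of $\bsmcD$; or along vertical annuli in some $\part U$, in which case the well-ordered condition of Remark \ref{mod_g} lets a uniformly bounded-transferring homotopy align the neighbouring $\varphi_D$. Across each $U\in \cu[k]$ I would extend $\varphi_0$ by a uniformly bi-Lipschitz marking-preserving homeomorphism between the hyperbolic solid-torus model of $U$ and the Margulis tube $\bt=\wh g(U)\subset \bbt[k]$, using the standard fact that two hyperbolic tubes with uniformly bi-Lipschitz Euclidean boundary identification and comparably short cores admit a uniformly bi-Lipschitz extension; Lemma \ref{meridian} together with Theorem \ref{lm}\,(i) is precisely what gives comparability of cores. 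The extension to $E_\nu$ and the conformal identification of $\part_\infty M\!E_\nu$ with $\part_\infty N_\rho$ are inherited directly from Theorem \ref{lm}\,(iii).

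The main obstacle is the matching step at the boundaries $\part U$ for $U\in\cu[k]$. The interior of a generic brick has uniformly bounded geometry and the geometric-limit argument is essentially routine there. The delicate point is that the Euclidean tori $\part U$ can carry cellulations of arbitrarily large combinatorics (controlled only by $|\omega_M(U)|\geq k$), so both $g|_{\part U}$ and the tube-extension depend non-trivially on this combinatorics. One must show that the bi-Lipschitz constants for the horizontal-face embeddings inside $M_\nu[k]$ and for the tube-boundary identification on $\part U$ assemble consistently across the collar annuli, with no marking discrepancy surviving in any geometric limit. This is exactly where the combination of the well-ordered condition of Remark \ref{mod_g}, the short-core conclusion of Lemma \ref{meridian}, and the uniform injectivity lower bound $\ve_2$ from Theorem \ref{lm}\,(i) is used, and verifying the compatibility carefully is the technical heart of the argument.
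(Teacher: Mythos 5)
Your overall strategy---produce uniformly bi-Lipschitz embeddings on the pieces of a bounded-geometry brick decomposition, then patch across shared faces---is the right framework, and it is genuinely close in spirit to the paper's proof (the paper works with the sequence $\mathcal{Y}_Q$ of horizontal surfaces, which is just a coarsening of $\boldsymbol{\mathcal{D}}$, and it too uses least-area regularisation as in Remark~\ref{r_geom}, a geometric-limit argument as in Example~\ref{e_geom2}, and Proposition~\ref{homeo}). But there is one concrete gap, and you have correctly located it without resolving it: the matching along tube boundaries $\part U$.

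Your Step~1 asks for a bi-Lipschitz embedding $\varphi_D$ properly homotopic to $g|_D$ \emph{rel}~$\part D$. When $\part_{\mathrm{vt}}D$ meets $\part U$ for some $U\in\cu[k]$, the corresponding boundary annulus is carried by $g$ to an annulus in $\part\mathbf{T}$ whose width is $|\theta_U(y_0)-\theta_U(y_1)|$, and nothing you have cited bounds this from below. The ingredients you invoke do not close this hole: Remark~\ref{mod_g} only gives \emph{well-orderedness} of $g|_{\part U}$ (proper homotopy rel~$L$ to a homeomorphism), and a homeomorphism can still contract the longitudinal coordinate of the Euclidean torus by an arbitrarily large factor; Lemma~\ref{meridian} controls the core length of $\mathbf{T}$, not the width of $\theta_U(\cf_{\part_{\mathrm{vt}}D})$; and Lemma~\ref{l_Y} separates $g(\part_-D)$ from $g(\part_+D)$ by $\gamma_0$ but does not prevent the image of the tube-boundary face from being a thin spiralling annulus. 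If $\theta_U$ is in fact strongly contracting, then no homotopy of $g|_D$ that fixes $\part D$ can produce a bi-Lipschitz embedding, and your geometric-limit contradiction collapses because the limit map has a genuinely degenerate boundary restriction rather than a bi-Lipschitzable one---so there is no contradiction to extract. What is needed is exactly the content of the paper's Lemmas~\ref{non_ret} and \ref{rear}: first a \emph{non-retraction} estimate (points of $\cf_A$ at distance $\geq a_0$ have $\theta_U$-images at distance $\geq K$, proved by combining the ordering from Lemma~\ref{l_Y} with a geometric limit that caps the number of sectional points whose $g$-images can crowd into a bounded region), and then a straightening of $\theta_U$ to a uniformly bi-Lipschitz $\zeta_U$ by an affine interpolation on the pieces between spaced-out markers, using the bound of Remark~\ref{mod_g} only on the two short horizontal components of $\part U$. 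Without these, the per-brick and patching steps cannot be made to yield a uniform lower Lipschitz bound on $\part\cu[k]$.

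Two minor points. First, Theorem~\ref{blm} produces $\varphi$ only on $M\!E_\nu[k]$, which excludes $\cu[k]$ by definition; your Step~3 of extending across the tubes $\cu[k]$ is not part of this theorem but is carried out afterwards (in the proof of Theorem~\ref{elc}, citing \cite[Lemma~8.5]{bcm} or \cite[Lemma~5.8]{bow3}). Second, for the patching at horizontal faces your use of the order-preservation following Proposition~\ref{homeo} is fine, but the paper is careful to also impose that the chosen horizontal surfaces in $\cy_Q$ are at least $3\gamma_0$ apart in the covering so that their $\gamma_0$-neighbourhoods are disjoint; you should do the same when choosing which faces of $\boldsymbol{\mathcal{D}}$ to promote to embedded surfaces, otherwise the least-area representatives on adjacent faces may interfere.
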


For the proof, we need the following two lemmas.

\begin{lemma}\label{non_ret}
For any component $U$ of $\cu[k]$, let $A$  be a vertical component of $\part U$.
Then there exists a uniform constant $a_0$ such that, 
for any $x_0,x_1\in \cf_{A}$ with $\dist_{\cf_U}(x_0,x_1)\geq a_0$, 
$\dist_{\cg_U}(\theta_U(x_0),\theta_U(x_1))\geq K$.
\end{lemma}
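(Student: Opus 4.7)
The approach is by contradiction: combine Lemma \ref{l_Y}, which furnishes a lower bound on $\dist_{N_\rho[k];Q}$ between well-separated horizontal surfaces, with the observation that closeness of $\theta_U(x_0),\theta_U(x_1)$ in $\cg_U$ yields a short path on $\part\bt$ between the corresponding horizontal surfaces, giving an incompatible upper bound.

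\textbf{Choosing $a_0$ and applying Lemma \ref{l_Y}.} By (\ref{eta_0}) each brick of $\bsmcD_Q$ has diameter less than $\delta_2$, so consecutive sectional points on $\cf_A$ differ in vertical height by at most $\delta_2$. Fix an integer $n$ with $n\gamma_0>K(2\delta_2+1)$ (possible since $\gamma_0>K(\delta_0+1)$) and set $a_0=(n+2)\delta_2$. Given $x_0,x_1\in\cf_A$ with $\dist_{\cf_U}(x_0,x_1)\geq a_0$, choose sectional points $y_0,y_1\in\cf_A$ strictly between $x_0$ and $x_1$ with $y_i$ within $\delta_2$ of $x_i$, so that $F(y_0)=F_j$ and $F(y_1)=F_{j+m}$ for some $m\geq n$ in the sectional sequence $\{F_j\}$ of Lemma \ref{l_Y}. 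That lemma gives $\dist_{N_\rho[k];Q}(g(F(y_0)),g(F(y_1)))\geq m\gamma_0$. Since $F(x_i)$ and $F(y_i)$ lie in a common brick of $\bsmcD_Q$ and are joined by a vertical arc along $\part U$ of length at most $\delta_2$, and since $g$ lifts to a $K$-Lipschitz map $\wt g\colon\wt M_\nu[k]\to\wt N_\rho[k]$, we obtain $\dist_{N_\rho[k];Q}(g(F(x_i)),g(F(y_i)))\leq K\delta_2$ for $i=0,1$. The triangle inequality then yields
\begin{equation*}
\dist_{N_\rho[k];Q}(g(F(x_0)),g(F(x_1)))\geq m\gamma_0-2K\delta_2>K.
\end{equation*}

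\textbf{Short path on $\part\bt$ and contradiction.} Suppose $\dist_{\cg_U}(\theta_U(x_0),\theta_U(x_1))<K$. Let $l_i$ be the leaf of $\cf_U$ through $x_i$; by Remark \ref{mod_g}, $g(l_i)\in\cg_U$ is a closed geodesic on $\part\bt$ which is a boundary component of $g(F(x_i))$. The hypothesis produces an arc $\alpha\subset\part\bt\subset N_\rho[k]$ of length less than $K$ from a point of $g(l_0)$ to one of $g(l_1)$. Since $\part U\subset Q$, we have $\fd(\part U)\subset\fd(Q)$, so $\part U$ lifts to a single copy of $\part\bt$ in the cover $\wt N_\rho[k]$; the arc $\alpha$ lifts to this copy and connects the lifts of $g(F(x_0))$ and $g(F(x_1))$ attached to it. This gives $\dist_{N_\rho[k];Q}(g(F(x_0)),g(F(x_1)))<K$, contradicting the previous estimate.

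\textbf{Main obstacle.} The principal technical point is to track lifts consistently: one must verify that the boundary components of $g(F(x_0))$ and $g(F(x_1))$ along $g(l_0),g(l_1)$ lie on the same component of $q^{-1}(\part\bt)$, and that the short arc $\alpha$ also lifts to that component. This matching of lifts---rather than the Lipschitz estimates or the application of Lemma \ref{l_Y}---is where care is needed, and it follows from $\part U\subset Q$ together with the definition of the cover $\wt N_\rho[k]$ associated to $g_*(\fd(Q))$.
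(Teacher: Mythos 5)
Your first step has a genuine gap. You claim that the sectional points $y_0,y_1$ near $x_0,x_1$ satisfy $F(y_0)=F_j$ and $F(y_1)=F_{j+m}$ inside the sectional sequence $\{F_j\}$ of a single brick $Q\in\bsmcQ$, and then invoke Lemma \ref{l_Y}. But the vertical annulus $A\subset\part U$ need not lie in a single brick: as one moves vertically along $A$, other components of $\cu[k]$ on the side of $A$ opposite to $U$ can begin or end, producing horizontal boundaries of $\bsmcQ$-bricks that cut across $A$. (The paper is careful about this, writing $\cf_{A\cap Q}$ rather than $\cf_A$ whenever a single brick enters.) When $F(y_0)$ and $F(y_1)$ lie in different bricks, Lemma \ref{l_Y} gives no estimate whatsoever on $\dist_{N_\rho[k];Q}(g(F(y_0)),g(F(y_1)))$, and your lower bound evaporates. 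The issue you flag in your ``main obstacle'' is also real and is not resolved by the premise you offer: $\part U$ is \emph{not} contained in $Q$ (indeed $U\not\subset Q$, so only $A\cap Q$ lies on $\part Q$). Even restricting to $A\cap Q$, the meridian of $\part\bt$ does not lie in $g_*(\fd(Q))$, so a short arc $\alpha$ on $\part\bt$ can lift to a different sheet of $q^{-1}(g(F(x_1)))$ than the canonical one used in $\dist_{N_\rho[k];Q}$; one would need a homotopy argument controlling meridional winding, which you do not give.

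The paper takes a fundamentally different route to sidestep both problems. Arguing in the contrapositive at the level of sectional points, it shows that \emph{every} $g(F(y))$ for a sectional point $y\in[y_0,y_1]$ must meet a set $X'$ of uniformly bounded diameter built from $g(F(y_0))$, $g(F(y_1))$ and the annulus $A'$ between their images on $\part\bt$. The problematic case $g(F(y))\cap X=\eset$ is eliminated by a Waldhausen-type exchange-of-positions argument via Corollary \ref{c_homeo}: exchanging $g(F(y))$ and $g(F(y_0))$ by a proper homotopy forces $F(y)$ and $F(y_0)$ into the \emph{same} brick $Q$, and only then is Lemma \ref{l_Y} used (to order the images of $\cf_{A\cap Q}$ and derive a contradiction). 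The actual bound on $|y_0-y_1|$ then comes not from a distance estimate but from a geometric-limit counting argument in the spirit of Example \ref{e_geom1}: the surfaces $F(y_\alpha)$ carry mutually non-homotopic loops of $g$-image length at most $K\ve_1$, and only a uniformly bounded number of such loops can intersect the bounded region $X'$. Your proposal omits the topological control that makes the single-brick reduction legitimate, and so does not establish the lemma.
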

\begin{proof}
Since each component of $\part_{\mathrm{vt}}D$ $(D\in \bsmcD)$ has diameter less than $\delta_2$, 
for any $x_i\in \cf_A$, there exists a sectional point $y_i\in \cf_A$ with $|x_i-y_i|\leq \delta_2/2$.
Since $\theta_U$ is $K$-Lipschitz, it suffices to show that there exists a uniform constant  $a_0$ with 
$|y_0-y_1|<a_0-\delta_2$ 
for any sectional points $y_0,y_1$ in $\cf_{A}$ with $|\theta_U(y_0)-\theta_U(y_1)|<K(\delta_2+1)$. 
We may assume that $y_0< y_1$ and $\theta_U(y_0)\leq \theta_U(y_1)$.
Consider the annulus $A'$ in $\part\mathbf{T}$ with $\cg_{A'}=[\theta_U(y_0),\theta_U(y_1)]$, where 
$\mathbf{T}=\wh g(U)$.
Set $X=g(F(y_0))\cup A'\cup g(F(y_1))$.
Since $\mathrm{diam}_{N_\rho[k];Q}(g(F(y_i)))\leq K\delta_0$ for $i=0,1$, $\diam(X)< K(2\delta_0+\delta_2+1)$.

Suppose that $g(F(y))\cap X$ is empty for some sectional point $y\in (y_0,y_1)$.
We may assume that $\theta_U(y)<\theta_U(y_0)$.
Since $g$ is properly homotopic to a homeomorphism $g_0$ by Corollary \ref{c_homeo}, one can exchange  the positions of $g(F(y))$ and $g(F(y_0))$ by a proper homotopy in $N_\rho[k]$.
If necessary modifying $g_0$ near $A$, we may assume that $g_0(\part_A F(y_0))=g(\part_A F(y_0))$, where 
$\part_A F(y_0)=F(y_0)\cap A$.
Since $g(F(y))\cap g(F(y_0))=\eset$ and $g_0(F(y_0))\cap g_0(F(y))=\eset$, by \cite{fhs} 
there exist properly embedded mutually disjoint surfaces $H_y$, $H_{y_0}$, $H_y'$ in $N_\rho[k]$ 
such that $g(F(y))$ is properly homotopic to $H_y$ rel.\ $g(\part_A F(y))$, 
both $g(F(y_0))$ and $g_0(F(y_0))$ to $H_{y_0}$ rel.\ $g(\part_A F(y_0))$,
and $g_0(F(y))$ to $H_y'$ rel.\ $g_0(\part_A F(y))$.
Since $H_y\cup H_y'$ excises from $N_\rho[k]$ a topological brick $B$ containing $H_{y_0}$ as a proper 
subsurface, $H_y$ is properly homotopic to $H_{y_0}$ in $N_\rho[k]$.
This implies that $F(y)$ and $F(y_0)$ are properly homotopic to each other in $M_\nu[k]$ and hence contained in the same brick $Q\in \bsmcQ$.

Let $Z$ be the set of sectional points $z$ of $\cf_{A\cap Q}$ with $z>y_0$.
By Lemma \ref{l_Y}, $\theta_U(z_+)<\theta_U(y_0)$ and $\theta_U(Z)$ is contained in the interval $[\theta_U(z_+),\theta_U(y_0))$, where $F(z_+)\subset \part_+Q$.
Since $\theta_U(z)<\theta_U(y_1)$ for any $z\in Z$, $y_1$ is not in $Z$.
If $y'$ is the smallest sectional point in $(z_+,y_1]$, then $g(F(y'))$ meets $X$ non-trivially.
Let $A''$ be the annulus in $\part \mathbf{T}$ with $\cg_{A''}=[\theta_U(z_+),\theta_U(y')]$ (or 
$[\theta_U(y'),\theta_U(z_+)]$) and $Y=A''\cup g(F(y'))$.
Since $\diam(A'')\leq K\delta_2$, $\diam(Y)\leq K(\delta_0+\delta_2)$.
If $g(F(z))\cap Y= \eset$ for $z\in Z$, then the positions of $g(F(y'))$ and $g(F(z))$ would be exchanged  
by proper homotopy in $N_\rho[k]$.
This contradicts that $y'\not\in Z$.
Hence $g(F(z))$ intersects $X'=\cn_{K(\delta_0+\delta_2)}(X,N_\rho[k])$.
It follows that $g(F(y))\cap X'\neq \eset$ for any sectional point $y$ in $[y_0,y_1]$.

The interval $[y_0,y_1]$ has at least $(y_1-y_0-\delta_2)/\delta_2$ sectional points $y_\alpha$.
Since the surfaces $F(y_\alpha)$ have mutually non-parallel simple non-contractible loops $l_\alpha$ with 
$\mathrm{length}_{N_\rho[k]}(g(l_\alpha))\leq K\ve_1$ and 
$\diam(X')$ is uniformly bounded, by a geometric limit argument as in Example \ref{e_geom1}, one can prove that $(y_1-y_0-\delta_2)/\delta_2$ is less than a uniform constant $m_0$.
Thus we have $|y_0-y_1|<a_0-\delta_2$ for $a_0:=(m_0+2)\delta_2$.
\end{proof}

For an interval $J$ in $\cf_U$, an interval $I$ in $\cg_U$ with $\part I=\theta_U(\part J)$ 
is the \emph{reduced image} of $J$ if $\theta_U|_J$ is homotopic rel.\ $\part J$ to a homeomorphism to $I$.

\begin{lemma}\label{rear}
There exist uniform constants $K_0,d_3$ such that $\theta_U$ is homotopic to a 
$K_0$-bi-Lipschitz map $\zeta_U:\cf_U\to \cg_U$  
such that $\dist_{\cg_U} (\theta_U(x),\zeta_U(x))<d_3$ for any $x\in \cf_U$.
\end{lemma}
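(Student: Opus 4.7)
The plan is to straighten $\theta_U$ by prescribing its values at a carefully chosen set of sample points in $\cf_U$ and linearly interpolating in the one-dimensional target $\cg_U$. The $K$-Lipschitz bound on $\theta_U$ supplies the upper Lipschitz bound for $\zeta_U$, and Lemma~\ref{non_ret} supplies the lower Lipschitz bound on vertical pieces.

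First I would use Remark~\ref{mod_g} to partition $\partial U$ into its at most two vertical annuli $A_i$ and at most two horizontal annuli $H_j$, inducing a decomposition of $\cf_U$ into sub-intervals $\cf_{A_i}$ and $\cf_{H_j}$; the latter have uniformly bounded length $\leq \ell_0$ since each $H_j$ lies in a horizontal surface of diameter $\leq\delta_0$. Inside each vertical sub-interval I insert sample points $x_0 < x_1 < \cdots$ with consecutive spacing in $[a_0,2a_0]$, and I take every vertical-horizontal interface as a sample point. Then for consecutive sample points $x_i, x_{i+1}$ lying in a common vertical piece, Lemma~\ref{non_ret} together with the $K$-Lipschitz property gives
\begin{equation*}
K \;\leq\; \dist_{\cg_U}(\theta_U(x_i),\theta_U(x_{i+1})) \;\leq\; 2Ka_0,
\end{equation*}
while across a horizontal sub-interval the $\theta_U$-image has diameter $\leq K\ell_0$.

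I then set $\zeta_U(x_i):=\theta_U(x_i)$ at every sample point, and on each segment $[x_i,x_{i+1}]$ define $\zeta_U$ as the constant-speed parametrization of a shortest oriented arc in $\cg_U$ from $\theta_U(x_i)$ to $\theta_U(x_{i+1})$, the orientation chosen coherently with the global homotopy class of $\theta_U$ on the component. On circle components the relevant degree is $\pm 1$, since by Corollary~\ref{c_homeo} $g$ is properly homotopic to a marking-preserving homeomorphism sending longitudes of $U$ to longitudes of $\bt$ with degree one. The bi-Lipschitz estimate now follows from the displayed inequality: $\zeta_U$ moves at most $2Ka_0$ over parameter length $\geq a_0$ and at least $K$ over length $\leq 2a_0$, the horizontal contributions being absorbed using $\ell_0$, yielding a uniform constant $K_0$. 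Closeness holds because both $\theta_U(x)$ and $\zeta_U(x)$ lie in the $2Ka_0$-neighborhood of $\theta_U(x_i)$ for $x\in[x_i,x_{i+1}]$, so $d_3:=4Ka_0+2K\ell_0$ suffices; the segment-by-segment straight-line homotopy inside this neighborhood of the one-dimensional $\cg_U$ provides the homotopy from $\theta_U$ to $\zeta_U$.

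The main obstacle lies in the circle case, where the local shortest-arc choices must assemble into a degree-matching map so that $\zeta_U$ remains in the homotopy class of $\theta_U$. This is controlled by the $K$-Lipschitz bound together with the sample spacing $\leq 2a_0$: each segment's $\theta_U$-image has diameter $\leq 2Ka_0$, which is much smaller than the circumference of $\cg_U$ (of order $\ve_1|\omega_M(U)|\geq \ve_1 k$) once $k$ is taken sufficiently large, as guaranteed by the Lipschitz Model Theorem. Thus the shortest-arc direction at each segment is unambiguous, and the local signed displacements sum to the degree of $\theta_U$, so the homotopy class is preserved.
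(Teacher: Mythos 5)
Your construction mirrors the paper's in outline---sample $\cf_U$ at points $x_i$ with spacing in $[a_0,2a_0]$, use Lemma~\ref{non_ret} to get $K\leq\dist_{\cg_U}(\theta_U(x_i),\theta_U(x_{i+1}))\leq 2Ka_0$, and interpolate---but it omits the one step that actually delivers the global lower bi-Lipschitz bound: you must show that the image arcs of consecutive segments do not overlap. The local speed estimates you record (``moves at most $2Ka_0$ over $\geq a_0$, at least $K$ over $\leq 2a_0$'') give that $\zeta_U$ is Lipschitz and that each single segment is a bi-Lipschitz embedding, but they say nothing about points $x\in J_i$, $y\in J_j$ with $j>i$: if the interpolated arc over $J_{i+1}$ were to double back into that over $J_i$, the displayed inequalities would still hold segment-by-segment while the assembled map fails to be injective, let alone uniformly bi-Lipschitz. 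Your circle discussion attacks a different and secondary concern (matching the net degree of $\theta_U$); degree $1$ is compatible with folding and does not substitute for monotonicity of the interpolation.

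The paper closes exactly this hole by working with the \emph{reduced image} $I_i$ of each segment---the arc to which $\theta_U|_{J_i}$ is homotopic rel endpoints---and proving $I_i\cap I_{i+1}\setminus\{\theta_U(x_{i+1})\}=\eset$: an overlap would produce $z_i\in J_i$, $z_{i+1}\in J_{i+1}$ with $\theta_U(z_i)=\theta_U(z_{i+1})$ and $z_{i+1}-z_i\geq a_0$, contradicting Lemma~\ref{non_ret}. (Equivalently: if $I_{i+1}$ re-enters $I_i$ then either $\theta_U(x_{i+2})\in I_i$ or $\theta_U(x_i)\in I_{i+1}$, and in either case one finds two preimages of a common point at distance $\geq a_0$.) Only with this non-overlapping in hand do the local bounds assemble into a global bi-Lipschitz estimate, since then $\dist_{\cg_U}(\zeta_U(x),\zeta_U(y))$ really is the sum of the intervening segment contributions. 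Your replacement of the reduced image by the ``shortest oriented arc'' is harmless once $k$ is large enough that $2Ka_0$ is well below the circumference of $\cg_U$, since the two then coincide; but the missing ingredient is the non-overlapping argument, not the choice of interpolation arc. Two smaller remarks: for a horizontal sub-interval you need a positive lower bound on the displacement of $\theta_U$ across it, which is exactly the $\dist_{\part\mathbf{T}}(\part_-A,\part_+A)\geq\ve_2/2$ from Remark~\ref{mod_g}---you cite that remark only for the partition, not for this estimate; and for a horizontal annulus inside a boundary brick $R_i$ the paper simply keeps $\zeta_{A'}=\theta_U|_{A'}$, already bi-Lipschitz by Theorem~\ref{lm}(iii), a case your uniform-length bound $\ell_0$ does not cover.
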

\begin{proof}
Consider any component $U\in \cu[k]$ such that $\part U$ contains a vertical annulus component $A$ with $\mathrm{diam}_{\cf_U}(\cf_A)\geq a_0$.
Let $\{x_i\}$ be a sequence in $\cf_{A}$ with $a_0\leq x_{i+1}-x_i\leq 2a_0$ and 
$\cf_A=\bigcup_i J_i$, where $J_i=[x_i,x_{i+1}]$.
By Lemma \ref{non_ret}, the reduced image $I_i$ of $J_i$ satisfies 
\begin{equation}\label{cf_A}
K\leq \mathrm{diam}_{\cg_U}(I_i)\leq \mathrm{diam}_{\cg_U}(\theta_U(J_i))\leq 2Ka_0.
\end{equation}
Thus $\theta_U|_{\cf_A}:\cf_A\to \cg_U$ is homotopic to the map $\zeta_A:\cf_A\to \cg_U$ 
rel.\ $\{x_i\}$ such that, for any $J_i$, the restriction $\zeta_A|_{J_i}$ is an affine 
map onto $I_i$.
Then, by (\ref{cf_A}), $\dist_{\cg_U}(\theta_U(x),\zeta_A(x))<2Ka_0$ for any $x\in \cf_A$.
If $I_i\cap I_{i+1}\setminus \{x_{i+1}\}$ were not empty, then there would exist $z_i\in J_i$ and $z_{i+1}\in J_{i+1}$ 
with $\max\{x_{i+1}-z_i,z_{i+1}-x_{i+1}\}=a_0$ and $\theta_U(z_i)=\theta_U(z_{i+1})$.
Since $z_{i+1}-z_i\geq a_0$, this contradicts Lemma \ref{non_ret}. 
Thus, by (\ref{cf_A}), $\zeta_A$ is a uniformly bi-Lipschitz map onto an interval in $\cg_U$.

Let $A'$ be a horizontal component of $\part U$.
If $A'$ is not contained in a boundary brick in $\bsmcB_\part$, then $A'$ is isometric to 
$S^1(\ve_1)\times [0,1]$ as defined in Subsection \ref{M_on_bu} and hence $\mathrm{diam}_{\cf_U}(\cf_{A'})=1$.
By Remark \ref{mod_g}, the reduced image $I$ of $\cf_{A'}$ satisfies
$$\frac{\ve_2}2 \leq \mathrm{diam}_{\cg_U}(I)\leq \mathrm{diam}_{\cg_U}(\theta_U(\cf_{A'}))\leq K.$$
Thus $\theta_U|_{\cf_{A'}}:\cf_{A'}\to \cg_U$ is homotopic to a uniformly bi-Lipschitz map $\zeta_{A'}:\cf_{A'}\to I'\subset \cg_U$ 
rel.\ $\part \cf_{A'}$ by a uniformly bounded-transferring homotopy.
If $A'$ is contained in a boundary brick, then $\zeta_{A'}=\theta_U|_{A'}:A'\to \cg_U$ is already  
uniformly bi-Lipschitz onto the image by Theorem \ref{lm}\,(iii). 
The union $\zeta_U$ of these bi-Lipschitz maps is our desired map.
\end{proof}

\begin{proof}[Proof of Theorem \ref{blm}]
By Lemma \ref{rear}, there exists a uniform constant $K_1$ such that $g:M\!E_\nu[k]\to N_\rho[k]$ is properly 
homotopic to a $K_1$-Lipschitz map $g_1$ with 
$\dist_{N_\rho[k]}(g(x),g_1(x))\leq d_3+1$
for any $x\in M\!E_\nu[k]$  and 
such that the restriction 
$g_1|_{\part U}$ induces the $K_1$-bi-Lipschitz map $\zeta_U:\cf_U\to \cg_U$ for any component 
$U$ of $\cu[k]$, where the support of the homotopy is contained in 
a small collar neighborhood of $\part \cu[k]$ in $M\!E_\nu[k]$.
Here `$+1$' just means that $d_3+1$ is a constant strictly greater than $d_3$.
Since the original $g|_{E_\nu}:E_\nu\to E_N$ is uniformly bi-Lipschitz by Theorem \ref{lm}\,(iii), 
we may suppose that $g_1|_{E_\nu}$ is also a uniformly bi-Lipschitz map 
onto $E_N$.

Deform the metric on $N_\rho[k]$ in a small collar neighborhood of $\part N_\rho[k]$ so that $\part N_\rho[k]$ 
is locally convex but the sectional curvature of $N_\rho[k]$ is still pinched by $-1$ and some uniform constant 
$\kappa_0>0$.
For any critical horizontal surface $G_\alpha$ of $M\!E_\nu[k]$, let $H_\alpha$ be a surface in $N_\rho[k]$ 
which has the least area with respect to the modified metric on $N_\rho[k]$ among all 
surfaces properly homotopic to $g_1(G_\alpha)$ without moving their boundaries.
By Proposition \ref{homeo}, $g_1(G_\alpha)$ is properly homotopic to an embedding without moving the boundary.
By \cite{fhs}, $H_\alpha$ is also an embedded surface and $H_\alpha\cap H_\beta=\eset$ whenever  
$H_\alpha\neq H_\beta$.
Since the area of $G_\alpha$ is less than some uniform constant $A_0$, 
$\area(H_\alpha)\leq \area(g_1(G_\alpha))\leq K_1^2A_0$.
Since $N_\rho[k]\subset N_{\rho[\ve_2,\infty)}$ by Theorem \ref{lm}\,(i), the injectivity radius of $H_\alpha$ is 
not less than $\ve_2$.
Since moreover the intrinsic curvature of $H_\alpha$ at any point is at most $\kappa_0$, 
the diameter of $H_\alpha$ is less than a uniform constant.
As was seen in Example \ref{e_geom2} and Remark \ref{r_geom}, there exists a uniform constant $K_2>1$ such that 
$g_1$ is homotopic without moving $g_1|_{\part M\!E_\nu[k]}$ to a $K_2$-Lipschitz map $g_2$  
the restriction $g_2|_{G_\alpha}$ of which is a $K_2$-bi-Lipschitz map onto $H_\alpha$ for any $G_\alpha$.

Let $\{F_j\}$ be the sequence of horizontal surfaces in $Q\in \bsmcQ$ given in Lemma \ref{l_Y}.
Since $g_2$ is obtained from $g$ by a uniformly bounded-transferring homotopy, there exists 
a uniform constant $a_1\in \nn$ and a subsequence $\cy_Q=\{Y_l\}_{l\in L}$ of $\{F_j\}$ with $Y_l=F_{j_l}$ 
indexed by an interval $L$ in $\zz$ which satisfies the following conditions if $\bsmcD_Q$ contains at least $(a_1-1)$ bricks.
\begin{enumerate}[(i)]
\item
$Y_{\inf L}=\part_-Q$ and $Y_{\sup L}=\part_+Q$ if any.
\item
$j_{l+1}-j_l\leq a_1$ and 
$\dist_{N_\rho[k];Q}(g_2(Y_l),g_2(Y_{l+1}))\geq 3\gamma_0$ for any $\{l,l+1\}\subset L$.
\item
The sequence $\{g_2(Y_l)\}$ ranges 
in order from $g_2(\part_-Q)$ to $g_2(\part_+ Q)$ in $N_\rho[k]$.
\end{enumerate}

By (\ref{eta_0}) and (ii), $\dist_{N_\rho[k];Q}(g_2(Y_l),g_2(Y_{l+1}))\leq K_2\delta_2 a_1$.
Set $\mathcal{Y}=\bigcup_{Q\in \bsmcQ} \mathcal{Y}_Q$. 
Note that the $\gamma_0$-neighborhoods $\cn_{\gamma_0}(g_2(Y_u))$ of $g_2(Y_u)$ in 
$N_\rho[k]$ for $Y_u \in \mathcal{Y}_Q$ not in $\part_{\mathrm{hz}}Q$ are mutually disjoint and 
disjoint from the $\gamma_0$-neighborhood of $g_2(\part_{\mathrm{hz}}Q)$.
By Proposition \ref{homeo}, for any $Y_u \in \cy\setminus \bigcup_\alpha\{G_\alpha\}$, the restriction $g_2|Y_u :Y_u \to N_\rho[k]$ is properly homotopic to an embedding 
$h_u$ 
which is a $K_3$-bi-Lipschitz map onto a surface contained in $\cn_{\gamma_0}(g_2(Y_u))$ for some 
uniform constant $K_3\geq 1$.
Since the geometries on these embedded surfaces are uniformly bounded,   
there exists a uniform constant $K'\geq \max\{K_2,K_3\}$ as in Example \ref{e_geom2} such that $g_2$ 
is properly homotopic to a $K'$-bi-Lipschitz map $\varphi$ with  $\varphi|_{\bigcup_{\alpha}G_\alpha}=g_2|_{\bigcup_{\alpha}G_\alpha}$ 
and $\varphi|_{Y_u}=h_u$ 
for any $Y_u \in \mathcal{Y}\setminus \{G_\alpha\}$.
This completes the proof.
\end{proof}

It is well known that the bi-Lipschitz model theorem together with standard hyperbolic geometric 
arguments implies the Ending Lamination Conjecture.

\begin{theorem}[Ending Lamination Conjecture]\label{elc}
Let $N_\rho,N_{\rho'}$ be hyperbolic $3$-manifolds as in Subsection \ref{hyp} which have the same 
end invariant set $\nu$.
Then, any marking-preserving homeomorphism $f:N_\rho\to N_{\rho'}$ is properly homotopic to 
an isometry.
\end{theorem}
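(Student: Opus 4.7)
The plan is to use the Bi-Lipschitz Model Theorem (Theorem \ref{blm}) to transport the identification of end invariants to a bi-Lipschitz identification of the two 3-manifolds, and then to promote this to an isometry via Sullivan's rigidity theorem. First, since $N_\rho$ and $N_{\rho'}$ share the same end invariant set $\nu$, they share the same model $M\!E_\nu[k]$, and Theorem \ref{blm} produces marking-preserving $K'$-bi-Lipschitz homeomorphisms $\varphi\colon M\!E_\nu[k]\to N_\rho[k]$ and $\varphi'\colon M\!E_\nu[k]\to N_{\rho'}[k]$ that extend to conformal homeomorphisms on $\part_\infty M\!E_\nu\to \part_\infty N_\rho$ and $\part_\infty M\!E_\nu\to \part_\infty N_{\rho'}$. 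The composition $\psi=\varphi'\circ \varphi^{-1}\colon N_\rho[k]\to N_{\rho'}[k]$ is then a marking-preserving ${K'}^2$-bi-Lipschitz homeomorphism, properly homotopic to $f|_{N_\rho[k]}$.

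Next, I would extend $\psi$ across the Margulis tubes $\bbt[k]$ removed from $N_\rho$ and $N_{\rho'}$. By Theorem \ref{lm}\,(i) the removed components of $N_{\rho(0,\ve_1)}$ and $N_{\rho'(0,\ve_1)}$ are in bijective correspondence through $\varphi,\varphi'$, and each such pair consists of standard hyperbolic tubes with short geodesic cores. Using the uniform control of $\varphi,\varphi'$ on the tube boundaries and the fact that hyperbolic tubes are determined up to isometry by a single complex modulus, one can interpolate $\psi$ on each tube by a uniformly bi-Lipschitz map of tubes to obtain a global marking-preserving bi-Lipschitz homeomorphism $\Psi\colon N_\rho\to N_{\rho'}$ extending to a conformal map $\part_\infty N_\rho\to \part_\infty N_{\rho'}$. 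Lifting to universal covers yields a $\rho$-$\rho'$-equivariant bi-Lipschitz self-map $\wt\Psi$ of $\hh^3$. As any bi-Lipschitz map of $\hh^3$ is a quasi-isometry, $\wt\Psi$ extends to an equivariant quasiconformal homeomorphism $\wt\Psi_\infty$ of $S^2_\infty$ conjugating $\rho(\fd S)$ to $\rho'(\fd S)$.

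Now I would invoke Sullivan's Rigidity Theorem. The extension of $\Psi$ to the ideal boundary is conformal on the domain of discontinuity $\Omega(\rho)$ by construction, so the Beltrami coefficient $\mu$ of $\wt\Psi_\infty$ is supported on the limit set $\L(\rho)$ and is $\rho(\fd S)$-invariant. Sullivan's theorem asserts that a finitely generated Kleinian group admits no non-trivial invariant measurable conformal structure on its limit set, forcing $\mu\equiv 0$ almost everywhere. Hence $\wt\Psi_\infty$ is Möbius and conjugates $\rho$ to $\rho'$ by an isometry; projecting gives an isometry $\Phi\colon N_\rho\to N_{\rho'}$. Finally, because $\Phi$ and $f$ are both marking-preserving homotopy equivalences inducing (up to inner automorphism) the same identification $\rho\to \rho'$ of surface groups, a standard argument (using tameness, the Scott core, and the fact that two properly homotopic $\fd_1$-injective maps of a tame surface-group 3-manifold are properly homotopic when their end-behavior agrees) shows that $\Phi$ is properly homotopic to $f$. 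The most delicate step is the passage from the bi-Lipschitz identification on the thick part to a globally bi-Lipschitz $\Psi$ on the tubes, since the tube moduli coming from the two models need not coincide exactly; nonetheless the uniform control in Theorem \ref{blm} on $\part\cu[k]$ is enough to do the interpolation inside each standard hyperbolic tube with uniform bi-Lipschitz loss.
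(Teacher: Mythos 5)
Your proposal follows essentially the same route as the paper: compose the two bi-Lipschitz model maps from Theorem \ref{blm}, extend across the removed tubes to a global marking-preserving bi-Lipschitz homeomorphism $N_\rho\to N_{\rho'}$, lift to an equivariant quasiconformal extension on $\wh{\mathbf{C}}$ that is conformal on the domain of discontinuity, and apply Sullivan's rigidity theorem to upgrade it to an isometry. The only cosmetic difference is that the paper extends $\varphi$ and $\varphi'$ individually to bi-Lipschitz maps $\wh\varphi:M\!E_\nu\to N_\rho$ and $\wh\varphi':M\!E_\nu\to N_{\rho'}$ (citing \cite[Lemma 8.5]{bcm} or \cite[Lemma 5.8]{bow3}) before composing, whereas you compose first on the thick parts and then interpolate across the tubes --- the tube-modulus mismatch you flag is exactly what those cited lemmas control, so both orderings work.
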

\begin{proof}
By Theorem \ref{blm}, there exist marking-preserving uniformly bi-Lipschitz maps $\varphi:M\!E_\nu[k]\to N_\rho[k]$ 
and $\varphi':M\!E_\nu[k]\to N_{\rho'}[k]$ which are extended to conformal homeomorphisms from 
$\part_\infty M\!E_\nu[k]$ to 
$\part_\infty N_\rho$ and $\part_\infty N_{\rho'}$ respectively.
One can furthermore extend $\varphi,\varphi'$ to uniformly bi-Lipschitz maps $\wh \varphi:M\!E_\nu\to N_\rho$ and 
$\wh\varphi':M\!E_\nu\to N_{\rho'}$ by using standard arguments of hyperbolic geometry, for example 
see \cite[Lemma 8.5]{bcm} or \cite[Lemma 5.8]{bow3}.
Then $\Phi=\wh \varphi'\circ \wh \varphi^{-1}:N_\rho\to N_{\rho'}$ is a marking-preserving bi-Lipschitz map.
The $\Phi$ is lifted to a bi-Lipschitz map $\widetilde \Phi:\hh^3\to \hh^3$  between the universal coverings, 
which is equivariant with respect to the covering transformations.
The map $\widetilde \Phi$ is extended to a quasi-conformal homeomorphism $\wt \Phi_{\part}$ 
on the Riemann sphere $\wh{\mathbf{C}}$ such that $\wt\Phi_{\part}|_{\Omega_\rho}$ is a conformal homeomorphism 
from $\Omega_\rho$ to $\Omega_{\rho'}$, where $\Omega_\rho$ is the domain of discontinuity of the Kleinian 
group $\rho(\fd(S))$.
By Sullivan's Rigidity Theorem \cite{su}, $\wt\Phi_{\part}$ is an equivariant conformal map on $\wh{\mathbf{C}}$ and hence 
extended to an 
equivariant isometry $\widetilde \Psi:\hh^3\to \hh^3$, which covers an isometry $\psi:N_\rho\to N_{\rho'}$ 
properly homotopic to $f$.
\end{proof}

\end{document}